\documentclass[11pt,a4paper]{article}
\usepackage{amsthm}
\newtheorem{theorem}{Theorem}
\newtheorem{lemma}[theorem]{Lemma}

\usepackage{amsmath,amsthm,amssymb,amsfonts}
\usepackage{bm}
\usepackage{geometry}
\usepackage{graphicx}
\usepackage{mathtools}
\usepackage{hyperref}
\usepackage{booktabs}
\usepackage{algorithm}
\usepackage{algpseudocode}
\theoremstyle{definition}

\newtheorem{remark}{Remark}

\title{Learned Dictionaries with Total Variation and Non-Negativity for Single-Cell Microscopy: Convergence Theory and Deterministic Multi-Channel Cell Feature Unification}
\author{
	Erdem Altunta\c{c}\\
	Aegis Digital Technologies\\
	Neubertstr.~21, 01307 Dresden, Germany\\
	\texttt{erdem.altuntac@aegis-digital.tech}\\
	\url{https://www.aegis-digital.tech}
}
\date{April 26, 2026 \\ \small Submitted to the \emph{Journal of Mathematical Imaging and Vision}}

\newcommand{\R}{\mathbb{R}}
\newcommand{\norm}[1]{\left\lVert #1 \right\rVert}
\newcommand{\ip}[2]{\left\langle #1, #2 \right\rangle}
\newcommand{\prox}{\mathrm{prox}}
\newcommand{\TV}{\mathrm{TV}}

\begin{document}
	\maketitle
	
	
	\begin{abstract}
		We introduce a variational dictionary-based learning algorithm with hybrid penalization for
		single-cell microscopy signals. The cost functional couples a least-squares data fidelity
		term with total-variation (TV) regularization and a non-negativity constraint, promoting
		edge-preserving, physically meaningful reconstructions under heterogeneous backgrounds and
		imaging artifacts. We formulate the learning task with an explicit unitary (orthonormal)
		constraint on the dictionary operator, ensuring well-conditioned representations and
		predictable numerical behavior. The resulting optimization problem is solved by
		an alternating proximal-gradient scheme that combines smooth updates with closed-form
		proximal steps for non-smooth penalties. We prove that the PDHG iterates converge to
		the regularized minimizer under an explicit step-size condition ($\tau\sigma < 1/8$),
		and that when the true solution satisfies a variational source condition (VSC), the
		regularized solution converges to the true solution at the optimal $O(\delta)$ rate
		under a noise-proportional regularization parameter choice $\lambda \propto \delta$.
		
		Beyond reconstruction, we address the problem of multi-channel cell feature unification:
		given five imaging channels of the BSCCM dataset (DPC Left, Right, Top, Bottom, and Brightfield),
		we propose a \emph{deterministic} approach to synthesize a unified single-cell representation.
		Rather than probabilistic latent encodings, we learn a family of \emph{per-channel} unitary
		dictionaries $\{D^{(c)}\}_{c=1}^{C}$, each adapted to the optical physics of its channel, and
		concatenate the resulting per-channel sparse codes into a single channel-agnostic cell descriptor
		$\varphi_j \in \mathbb{R}^{CK}$ under a fixed channel ordering. No cross-channel atom
		correspondence is assumed; the descriptor aggregates information from all channels into a
		single vector suitable for downstream classification. This deterministic unification
		strategy is mathematically transparent, reproducible, and directly compatible with the
		clinical requirement that AI systems for diagnostics must be interpretable and auditable.
		On the BSCCM-tiny subset ($N{=}1000$ cells, $K{=}512$ atoms) the framework reaches
		per-channel reconstruction fidelities of $97.06$--$97.54\%$ on the four DPC channels and
		$94.79\%$ on Brightfield, with fully deterministic (bit-identical) iterates across independent
		runs. Biological validation on the $N{=}28$ expert-labelled subset yields unsupervised
		$k{=}2$ lymphoid-vs-myeloid separation at ARI$=0.575$, NMI$=0.471$
		(permutation $p < 0.0001$; bootstrap 95\% CI: ARI $[-0.11,\,0.71]$).
	\end{abstract}
	
	\section{Introduction}
	Dictionary learning offers a mathematically transparent mechanism for encoding
	high-dimensional signals through a compact collection of learned atoms.
	In computational microscopy, and in single-cell imaging in particular,
	achieving a useful representation requires balancing three competing demands:
	(i) \emph{sparsity} (so that only a small number of atoms are activated per cell,
	isolating the most informative structures),
	(ii) \emph{structure preservation} (maintaining cell boundaries and subcellular
	texture across the representation), and
	(iii) \emph{numerical stability} (sustaining reliable optimization over large,
	heterogeneous datasets).
	
	This manuscript introduces a \emph{new variational dictionary learning algorithm}
	with \emph{hybrid} least-squares--TV penalization and non-negativity constraints \cite{Altuntac2019ZenodoPreprint}.
	Compared to our prior work~\cite{Altuntac2019ZenodoPreprint}, which combined $\ell_1$
	sparsity with non-negativity on a fixed DCT-II dictionary, the present formulation
	makes two advances. First, the $\ell_1$ sparsity term is replaced by total variation
	penalization, which preserves edges in the reconstructed image and is better suited
	to the spatial structure of single-cell microscopy data; the non-negativity constraint
	on the reconstructed image is retained, since it remains the correct physical model
	for fluorescence and brightfield intensities. Second, the fixed DCT-II dictionary of
	the prior work is replaced with a \emph{learned}, data-adapted dictionary obtained via
	alternating Procrustes SVD updates, which converges to the optimal orthonormal basis
	for the training data (see Section~\ref{sec:dict_update}).
	Beyond reconstruction, this manuscript addresses a second contribution: a \emph{deterministic}
	framework for unifying multi-channel cell features from the five BSCCM imaging channels into a
	single cell descriptor (see Section~\ref{sec:unification}). This stands in contrast to probabilistic
	approaches such as the scVI framework \cite{scVI2018,totalVI2021}, and is motivated by the clinical requirement
	that AI systems for diagnostics must be reproducible, interpretable, and auditable --- a concern
	that has recently begun to motivate dedicated interpretability work on cell-microscopy representation
	learning~\cite{Donhauser2024DictLearnMicroscopy}, and that we address here at the
	representation-construction level rather than as post-hoc analysis of an already-trained
	black-box encoder.
	The experimental focus is the \textbf{success rate} of the learning algorithm when applied to
	\textbf{single-cell images from BSCCM} (see Section~\ref{sec:experiments}).

	\paragraph{Why morphology matters clinically.}
	Cell morphology remains the \emph{primary substrate} of routine hematological
	diagnostics. Peripheral blood smear analysis, examining cell size, shape,
	nuclear-to-cytoplasmic ratio, granularity and chromatin patterns under a light
	microscope, is the method by which the majority of haematological conditions
	are first identified in clinical practice~\cite{Acevedo2019CNNblood}.
	Automated morphology-based systems are already deployed in routine hospital
	hematology: the CellaVision DM96 and its successors have demonstrated clinical
	sensitivity and specificity sufficient for reliable pre-classification of
	white-blood-cell differentials across large patient cohorts~\cite{Kratz2005CellaVision},
	and morphology-trained convolutional networks have matched expert-level
	recognition of blast cells in acute myeloid leukaemia on standard-of-care
	stained smears~\cite{Matek2019AMLmorphology,Acevedo2019CNNblood}.
	These observations situate image-based single-cell methods not as a
	speculative future direction but as an already-established clinical modality
	that stands to benefit from the kind of deterministic, auditable
	representation developed here.

	\section{Related Work and Prior Contributions}
	The present manuscript builds on a line of research in primal--dual proximal splitting, variational
	regularization, and dictionary-based modeling, with a particular focus on \emph{subdifferential (optimality)
		characterizations} of solutions and \emph{explicit, verifiable step-size choices}.
	
	\paragraph{Primal-dual splitting and subdifferential characterizations.}
	The foundational algorithmic ingredients of this work were established in
	\cite{Altuntac2019NewPairPD}, where a pair of primal--dual splitting algorithms
	for Bregman-iterated variational regularization was constructed and analyzed via
	the subdifferential inclusions associated with the nonsmooth penalty terms.
	Systematic investigation of parameter selection for these primal--dual schemes
	followed in \cite{Altuntac2020ChoiceParameters}, which identified explicit
	admissible step-size ranges and provided stability-oriented guidance.
	The present manuscript adopts the same organizing principle: the energy functional
	is retained in its original (non-Bregman) variational form, and algorithmic step-size
	conditions are derived directly from operator-norm bounds on the discrete gradient.
	
	\paragraph{Dictionary learning and sparse representations in sensing.}
	Sparse representations and dictionary learning arise naturally wherever a signal model
	must be adapted to empirical data statistics rather than fixed by a priori design.
	In the LiDAR depth-completion setting, for instance, convolutional sparse coding
	and data-driven dictionary learning were shown to improve reconstruction quality
	under realistic automotive conditions \cite{Giovanneschi2023LiDARCSC}.
	The present work occupies a complementary position: rather than targeting a
	specific sensing modality, we develop a unified proximal learning--inference
	framework in which TV regularization and a hard non-negativity constraint are
	coupled with dictionary-based reconstruction of single-cell images.
	
	\paragraph{Image-based single-cell profiling and multi-channel representation learning.}
	A complementary line of work addresses the problem of constructing compact, informative
	representations of single cells from high-throughput microscopy.
	Moshkov et al.~\cite{Moshkov2024} proposed a weakly supervised convolutional network
	trained on a large multi-study dataset to learn treatment-effect representations from
	cell images, demonstrating that data diversity and causal modeling together improve
	downstream profiling performance.
	In a different direction, unsupervised generative approaches have been explored
	for morphological profiling without treatment labels:
	variational autoencoders with orientation-invariance constraints have been applied
	to extract cell shape descriptors for clustering and outlier detection in large
	imaging datasets~\cite{BerardalO2VAE2024}.
	Recently and independently, Donhauser, Ulicna, Hartford and colleagues
	\cite{Donhauser2024DictLearnMicroscopy} have articulated and addressed a concern
	that directly overlaps with the motivation of the present work: that the vision
	foundation models currently dominating cell-microscopy representation learning
	are effectively \emph{black boxes}, whose internal features have no a priori
	biological meaning and must be interpreted post hoc if they are to support
	scientific conclusions or clinical decisions.
	Their response is to apply sparse dictionary learning \emph{on top of}
	the pretrained latents of a vision foundation model, extracting a set
	of sparse concepts --- cell type, genetic perturbation --- that turn out to
	be biologically interpretable even though the underlying encoder was not
	designed to be.
	The present manuscript addresses the same underlying concern from the
	opposite direction: rather than accept a black-box representation and
	seek interpretability from the outside, we \emph{construct} the
	representation variationally and deterministically from the start, so
	that interpretability, reproducibility, and stability are built into the
	dictionary atoms and the per-cell code by definition.
	The two approaches target different layers of the representation-learning
	stack --- foundation-model latents in \cite{Donhauser2024DictLearnMicroscopy},
	raw per-pixel reconstruction in this work --- and together illustrate
	a growing recognition that sparse dictionary formulations offer a route
	to mathematically auditable scientific imaging that neither purely
	generative nor purely discriminative deep models currently provide.
	The three works cited above all rely, however, on deep neural networks with stochastic
	latent variables or approximate inference,
	offering no closed-form reconstruction and no norm-bounded error guarantees on the
	inferred descriptor.
	The present work takes a different approach: dictionary atoms are interpretable
	structural primitives with an explicit visual meaning, the code $a_j^{(c)}$ is
	the unique minimizer of a convex variational problem for each cell and channel,
	and the reconstruction error $\|x_j^{(c)} - D^{(c)}a_j^{(c)}\|_2$ satisfies provable
	Lipschitz stability bounds under data perturbation (Theorem~\ref{thm:existence_uniqueness}).
	To the best of our knowledge, no existing image-based single-cell profiling method
	provides this combination of variational uniqueness, deterministic reproducibility,
	and explicit convergence guarantees for the multi-channel setting.
	
	\paragraph{Applied sensing and computational modeling.}
	Primal--dual ideas and learned representations reach beyond inverse problems into
	hardware-centric system design and neuroscience-motivated computation.
	Coherent LiDAR system architecture for long-range automotive applications is
	treated in \cite{Cwalina2021FMCWLiDAR}, while a neurogenic-inspired model for
	learning and memory is developed in \cite{Altuntac2023NeurogenicModel}.
	These contributions span distinct application areas, yet each pairs a
	mathematically principled model with explicit computational schemes amenable
	to analysis under verifiable stability conditions---the same program pursued here
	for single-cell microscopy.

	
	\paragraph{Relation to existing variational and dictionary-learning approaches.}
	Total-variation regularization for imaging inverse problems has a rich history
	tracing back to the Rudin--Osher--Fatemi model and its subsequent algorithmic
	realizations, including primal--dual splitting methods such as the PDHG framework
	of Chambolle and Pock, which furnishes an efficient and convergent solver for
	nonsmooth convex objectives.
	Dictionary learning, by contrast, has developed largely in a purely learning-driven
	paradigm, cf. block-coordinate descent, stochastic updates~\cite{Mairal2010,Elad2010}, with limited emphasis
	on the well-posedness and stability of the underlying variational problem.
	Works that combine sparse coding with variational reconstruction frequently treat
	the learning component heuristically, or establish convergence only for the
	inner optimization subproblem, without a subdifferential characterization of the
	full variational limit at which iterates converge.
	Additionally, many dictionary-learning analyses operate in a nonconvex setting
	that precludes uniqueness and stability guarantees for the reconstruction.
	
	\paragraph{Novelty of the present work.}
	Four features distinguish this manuscript from the foregoing literature.
	First, dictionary learning is embedded inside a rigorously analyzed variational
	framework: the full energy functional, rather than a surrogate or relaxation, is
	the object of analysis, and its subdifferential characterization is derived
	independently of the numerical algorithm, so it remains valid regardless of
	how the minimizer is computed.
	Second, the unitary structure of the dictionary is exploited together with the
	spectral bound $\|\nabla\|^2 \le 8$ to obtain \emph{fully explicit} step-size
	conditions for PDHG, and the strong convergence of the iterates to the unique
	variational minimizer is proved with an $O(1/k)$ ergodic primal-dual gap bound.
	Third, algorithmic convergence is connected to variational stability under data
	perturbations: a VSC-based analysis yields noise-dependent stopping rules and
	quantifies the precise interplay among learning, inference, and regularization.
	This combination of variational analysis, explicit primal-dual convergence
	theory, and data-adapted dictionary learning sets the proposed approach apart
	from prior treatments that are either purely algorithmic or analytically incomplete.
	Fourth, a deterministic framework for multi-channel cell feature unification is introduced: 
	rather than probabilistic latent encodings, per-channel unitary dictionaries $\{D^{(c)}\}_{c=1}^C$ are learned independently for each imaging channel and the resulting sparse codes are concatenated 
	into a single channel-agnostic cell descriptor $\varphi_j \in \mathbb{R}^{CK}$
	under a fixed channel ordering. The descriptor is uniquely and reproducibly determined by 
	Theorem~\ref{thm:existence_uniqueness}, making it directly compatible with the reproducibility and 
	auditability requirements of clinical AI systems under EU IVDR 2017/746, Annex~I, \S,9.1~\cite{IVDR2017}..

	\section{Mathematical Model and Notation}
	Let each single-cell measurement be vectorized as $x_j \in \R^{n}$ for $j=1,\dots,N$.
	We seek a dictionary operator
	\[
	D = [d_1,\dots,d_K] \in \R^{n\times K},
	\]
	and sparse codes $a_j \in \R^{K}$ such that
	\[
	x_j \approx D a_j.
	\]
	
	\subsection{Unitary (orthonormal) dictionary constraint}
	A central design choice of this paper is to enforce a \emph{unitary} (orthonormal) property
	for the dictionary operator:
	\begin{equation}
		D^{\top}D = I_K,
		\label{eq:unitary}
	\end{equation}
	i.e., the columns of $D$ form an orthonormal system (a point on the Stiefel manifold).
	Enforcing this constraint yields three practical benefits:
	(i) it eliminates degenerate rescaling between $D$ and $a_j$, removing an
	otherwise pervasive ambiguity;
	(ii) it improves the conditioning of the learned representation and provides
	predictable gradient magnitudes throughout the learning loop; and
	(iii) it makes norm bounds transparent---in particular, $\norm{Da}=\norm{a}$
	whenever $K\le n$ and $D$ has orthonormal columns, which underpins the
	operator-norm estimate in Lemma~\ref{lem:K_bound}.
	
	\subsection{Per-channel dictionary family in the multi-channel setting}
	\label{subsec:per_channel_family}
	In the multi-channel setting, let $c\in\{1,\dots,C\}$ index imaging channels
	($C=5$ for the BSCCM dataset: DPC Left, Right, Top, Bottom, and Brightfield).
	We denote the vectorised single-cell measurement in channel $c$ by
	$x_j^{(c)}\in\R^{n}$, and we learn a \emph{family} of $C$ dictionaries
	\begin{equation}
		\bigl\{D^{(c)}\bigr\}_{c=1}^{C}, \qquad D^{(c)}\in\R^{n\times K},\qquad
		(D^{(c)})^{\top} D^{(c)} = I_K \ \ \text{for each } c=1,\dots,C,
		\label{eq:per_channel_family}
	\end{equation}
	one dictionary per channel. Each $D^{(c)}$ is learned from its own channel's
	data, and each channel induces its own per-sample code $a_j^{(c)}\in\R^{K}$
	such that $x_j^{(c)}\approx D^{(c)} a_j^{(c)}$. Every statement in this section
	and in Sections~\ref{sec:objective}--\ref{sec:stability_vsc} applies
	\emph{per channel}: the single-channel analysis carries over unchanged to each
	$D^{(c)}$ in isolation, and the per-channel problems are mutually independent.
	The consequences of this choice for multi-channel feature unification are
	developed in Section~\ref{sec:unification}.
	
	\section{Hybrid Dictionary Learning Objective}
	\label{sec:objective}
	We propose a hybrid objective that couples least-squares data fidelity, TV-regularized
	image structure, and a non-negativity constraint on the reconstructed cell image.
	For each sample $x_j$, define the per-sample energy
	\begin{equation}
		\mathcal{E}(D,a_j; x_j)
		=\frac{1}{2}\,\norm{x_j - D a_j}_2^2
		+ \lambda_{\mathrm{TV}} \, \TV\!\big(D a_j\big)
		+ \iota_+\!\big(D a_j\big),
		\label{energy-func}
	\end{equation}
	where $\lambda_{\mathrm{TV}}\ge 0$ promotes piecewise smoothness (edge-preserving) on the
	reconstructed cell image, and $\iota_+$ is the indicator function of the non-negative orthant,
	\begin{equation}
		\iota_+(u) :=
		\begin{cases}
			0 & \text{if } u_i \ge 0 \text{ for all } i,\\
			+\infty & \text{otherwise.}
		\end{cases}
		\label{eq:indicator_nn}
	\end{equation}
	The non-negativity constraint $\iota_+(Da_j)$ encodes the physical fact that cell image
	intensities are non-negative, and has been shown to improve reconstruction stability in
	inverse problems with non-smooth penalties \cite{Altuntac2020ChoiceParameters}.
	Compared to our prior work~\cite{Altuntac2019ZenodoPreprint}, which used $\ell_1$ sparsity
	and non-negativity on a fixed DCT-II dictionary, the present manuscript introduces two
	advances.
	First, the $\ell_1$ sparsity term is replaced by total variation penalization on
	the reconstructed image $Da_j$, motivated by edge preservation: TV regularisation favours
	piecewise-smooth solutions with sharp transitions, which matches the spatial structure
	of single-cell microscopy (membrane boundaries, nuclear edges, granular interiors) more
	faithfully than the $\ell_1$ prior on dictionary coefficients used previously. The
	non-negativity constraint $\iota_+(Da_j)$ is retained from the prior work, since it
	remains the correct physical model for cell intensity data.
	Second, and equally importantly, the prior work used a \emph{fixed} DCT-II dictionary
	$D^0$ throughout; the dictionary was never updated from its initialisation.
	The present manuscript instead \emph{learns} the dictionary from the training data
	via the Procrustes SVD update (Section~\ref{sec:dict_update}): at convergence,
	$D^\star$ spans the top-$K$ principal subspace of the TV-denoised training images,
	which minimises the reconstruction error $\sum_j\|x_j - Da_j\|^2$ over all
	orthonormal dictionaries (Remark~\ref{rem:pca_dict}).
	
	The full learning problem reads
	\begin{equation}
		\min_{D,\{a_j\}_{j=1}^N} \;
		\sum_{j=1}^N \mathcal{E}(D,a_j;x_j)
		\quad\text{s.t.}\quad D^{\top}D = I_K.
		\label{eq:dl_problem}
	\end{equation}
	
	\paragraph{Total variation.}
	For an image $u\in \R^{h\times w}$ we use the (isotropic) discrete TV
	\[
	\TV(u)=\sum_{p}\sqrt{(\nabla_x u)_p^2+(\nabla_y u)_p^2},
	\]
	with standard forward differences and appropriate boundary handling.
	
	\paragraph{Per-channel instantiation.}
	In the multi-channel setting (Section~\ref{subsec:per_channel_family}), the
	learning problem \eqref{eq:dl_problem} is solved \emph{independently} for each
	channel $c=1,\dots,C$, with the substitution $D\leftarrow D^{(c)}$,
	$\{a_j\}\leftarrow\{a_j^{(c)}\}$, $\{x_j\}\leftarrow\{x_j^{(c)}\}$. There is no
	cross-channel coupling at the level of the objective: each $D^{(c)}$ is shaped
	solely by the data of channel $c$. The coupling between channels enters only
	downstream, in the definition of the unified cell descriptor
	(Section~\ref{sec:unification}).
	
	\section{Well-posedness of the variational problem}
	
	We first establish existence and uniqueness of minimizers of the variational problem
	\begin{equation}
		E(y;x)=\tfrac12\|y-x\|_2^2 + J(y),
		\label{eq:energy_wellposed}
	\end{equation}
	independently of the numerical algorithm used for its solution.
	Stability of the minimizer with respect to data perturbations is studied in
	Section~\ref{sec:stability_vsc} via a variational source condition (VSC) analysis.
	
	\begin{remark}[Per-channel scope]
		\label{rem:per_channel_wellposed}
		All results in this section are stated for a generic unitary dictionary
		$D\in\R^{n\times K}$ and a single data vector $x\in\R^{n}$. In the
		multi-channel setting (Section~\ref{subsec:per_channel_family}) the same
		statements apply independently to each channel: for every
		$c=1,\dots,C$, substituting $D\leftarrow D^{(c)}$ and
		$x\leftarrow x_j^{(c)}$ yields existence and uniqueness of the per-channel,
		per-sample minimiser $y_j^{(c)\star}$. No cross-channel assumption is
		needed.
	\end{remark}
	
	\begin{theorem}[Existence and uniqueness of minimizers]
		\label{thm:existence_uniqueness}
		Let $x\in\mathbb{R}^n$ be given, let $D\in\mathbb{R}^{n\times K}$ be a unitary
		dictionary with $D^\top D = I_K$, and let
		\begin{equation}
			J(y) = \lambda_{\mathrm{TV}}\,\mathrm{TV}(y) + \iota_+(y),
			\label{eq:J_wellposed}
		\end{equation}
		where $\lambda_{\mathrm{TV}} \ge 0$ and $\iota_+$ is the indicator of the
		non-negative orthant~\eqref{eq:indicator_nn}.
		Then $E(\cdot;x)$ admits a \emph{unique} minimizer $y(x)\in\mathbb{R}^n$.
	\end{theorem}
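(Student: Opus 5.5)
The plan is to treat $E(\cdot;x)$ as a proper, lower semicontinuous, strongly convex function on $\mathbb{R}^n$. Existence and uniqueness then follow from the standard result that such a function has exactly one minimizer. Note that the dictionary $D$ does not actually enter $E(y;x)$ as written in \eqref{eq:energy_wellposed}; the hypothesis $D^\top D=I_K$ is only needed to pass back to codes. So I would first prove the statement for $y$, and then remark on the $a$-parametrization.

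\textbf{Step 1 (convexity and lower semicontinuity of $J$).} The isotropic discrete TV has the form $\TV(y)=\sum_p \|(\nabla y)_p\|_2$. This is a composition of the linear map $\nabla$ with a sum of Euclidean norms, so it is convex, finite, and continuous on $\mathbb{R}^n$. The indicator $\iota_+$ is convex and lower semicontinuous because the non-negative orthant $\mathbb{R}^n_+$ is a nonempty closed convex set. Since $\lambda_{\mathrm{TV}}\ge 0$, $J$ is proper, convex and lower semicontinuous, and its domain is $\mathbb{R}^n_+\neq\emptyset$.

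\textbf{Step 2 (strong convexity and coercivity of $E$).} The fidelity $\tfrac12\|y-x\|_2^2$ is $1$-strongly convex and continuous. Adding the convex function $J$ keeps $E(\cdot;x)$ $1$-strongly convex, proper and lower semicontinuous. Because $J\ge 0$, we have $E(y;x)\ge \tfrac12\|y-x\|^2\to\infty$ as $\|y\|\to\infty$. Hence any sublevel set $\{E\le E(0;x)\}$ is nonempty, since $E(0;x)=\tfrac12\|x\|^2<\infty$ with $0\in\mathbb{R}^n_+$ and $\TV(0)=0$. It is also bounded, and closed by lower semicontinuity, hence compact.

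\textbf{Step 3 (existence and uniqueness).} By Weierstrass, $E(\cdot;x)$ attains its minimum on this compact sublevel set, which gives existence. For uniqueness, suppose $y_1\neq y_2$ are both minimizers with value $m$. Strong convexity gives
\[
E\bigl(\tfrac{y_1+y_2}{2};x\bigr)\le m-\tfrac18\|y_1-y_2\|^2<m,
\]
which is a contradiction. Equivalently, $y(x)=\prox_J(x)$, and the proximal map of a proper, lower semicontinuous convex function is single-valued.

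\textbf{Step 4 (codes, main subtlety).} The only step needing care is connecting this to codes $a$ with $y=Da$, as in \eqref{energy-func}, which is where $D^\top D=I_K$ is used. If $K=n$, then $D$ is orthogonal and $a\mapsto Da$ is an isometric bijection, so $a^\star=D^\top y(x)$ is the unique code.

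If $K<n$, one instead minimizes over the subspace $\mathrm{range}(D)$. Here $\|x-Da\|^2$ is still $1$-strongly convex in $a$, because $\|Da\|=\|a\|$. Also, $a\mapsto J(Da)$ is convex and lower semicontinuous, and it is proper since $a=0$ is feasible. The same Steps 1--3 then give a unique $a^\star$, and injectivity of $D$ transfers uniqueness between $a^\star$ and $y^\star=Da^\star$.

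I expect no real obstacle beyond stating precisely which variable the uniqueness refers to, and checking that the feasible set remains nonempty (it contains $0$).
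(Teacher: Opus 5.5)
Your proof is correct and follows essentially the same route as the paper. Both arguments establish that $J$ is convex and lower semicontinuous and that $E(\cdot;x)$ is coercive (you use $J\ge 0$ to get $E(y;x)\ge\tfrac12\|y-x\|_2^2$, while the paper uses the equivalent bound $E(y;x)\ge\tfrac14\|y\|_2^2-\tfrac12\|x\|_2^2$), then apply the direct method for existence and use the $1$-strongly convex fidelity for uniqueness. Your Step~4 goes beyond what the paper proves here, and you correctly treat the $K<n$ case as a minimization over $\mathrm{range}(D)$, whose minimizer $Da^\star$ need not coincide with $y(x)$.
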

	
	\begin{proof}
		\emph{Existence.}
		The indicator $\iota_+$ is proper, convex, and lower semicontinuous (l.s.c.) because
		$\mathbb{R}^n_+$ is a nonempty closed convex set.
		The map $y\mapsto\mathrm{TV}(y)$ is convex and continuous.
		Hence $J$ is proper, convex, and l.s.c., and so is $E(\cdot;x)$ as a sum of
		such functions.
		Moreover, $E(y;x)\ge\tfrac14\|y\|_2^2 - \tfrac12\|x\|_2^2\to+\infty$ as
		$\|y\|_2\to\infty$, so $E(\cdot;x)$ is coercive.
		By the direct method in the calculus of variations, $E(\cdot;x)$ attains its minimum.
		
		\emph{Uniqueness.}
		The quadratic fidelity term $\tfrac12\|y-x\|_2^2$ is $1$-strongly convex, and $J$
		is convex, so $E(\cdot;x)$ is strictly convex.
		A strictly convex functional has at most one minimizer; combined with existence, this
		gives uniqueness.
	\end{proof}
	
	We employ a primal-dual splitting scheme to compute the unique minimizer of $E(\cdot;x)$;
	convergence of the iterates is established in Section~\ref{sec:stability_vsc}.

	\begin{theorem}[Subdifferential Characterization and Proximal Fixed-Point System]
		\label{thm:subdiff}
		Let $x_j \in \R^n$ be a given datum, let $D \in \R^{n \times K}$ be a fixed unitary
		dictionary with $D^\top D = I_K$, and let $h := \iota_+$ be the indicator of the
		non-negative orthant.  Let $\nabla$ denote the 2D discrete forward-difference gradient
		and set $g(v) := \lambda_{\mathrm{TV}}\|v\|_{2,1}$ (the isotropic TV norm on the gradient
		field, with $\|v\|_{2,1} := \sum_\ell \|v_\ell\|_2$ pixelwise).
		Define the energy
		\begin{equation}
			\label{eq:energy_xalpha}
			\mathcal{E}_\alpha(x_\alpha,\, y)
			:=
			\frac{1}{2}\,\norm{x_\alpha - y}_2^2
			+ \alpha\,\TV(x_\alpha)
			+ h(x_\alpha),
			\qquad \alpha := \lambda_{\mathrm{TV}} > 0,
		\end{equation}
		and let $x_\alpha^\star$ be the unique minimizer of $\mathcal{E}_\alpha(\,\cdot\,,y)$
		(existence and uniqueness follow from Theorem~\ref{thm:existence_uniqueness}).
		Let $\mu > 0$ and $\nu > 0$ be the primal and dual step-lengths.
		Then the following hold.
		
		\begin{enumerate}
			
			\item \textbf{(First-order optimality.)}
			The necessary and sufficient condition for $x_\alpha^\star$ is the subdifferential
			inclusion
			\begin{equation}
				\label{eq:subdiff_opt}
				0 \in (x_\alpha^\star - y)
				+ \nabla^\top\!\bigl(\alpha\,\partial\,\|\cdot\|_{2,1}(\nabla x_\alpha^\star)\bigr)
				+ \partial h(x_\alpha^\star),
			\end{equation}
			which follows from the subdifferential sum rule and the convex chain rule
			$\partial(g \circ \nabla)(x) = \nabla^\top\!\partial g(\nabla x)$
			(valid because $\nabla^\top\nabla$ is bounded and $g$ is continuous on the
			range of $\nabla$).
			Equivalently, there exist $q_\alpha \in \alpha\,\partial\,\|\cdot\|_{2,1}(\nabla x_\alpha^\star)$
			and $z_\alpha \in \partial h(x_\alpha^\star)$ such that
			\begin{equation}
				\label{eq:stationarity_explicit}
				0 = (x_\alpha^\star - y) + \nabla^\top q_\alpha + z_\alpha.
			\end{equation}
			
			\item \textbf{(Proximal fixed-point for $h$, with primal step-length $\mu$.)}
			Multiplying \eqref{eq:stationarity_explicit} by $\mu > 0$ and rearranging gives
			\begin{equation}
				\label{eq:stationarity_mu}
				x_\alpha^\star - \mu\bigl[(x_\alpha^\star - y) + \nabla^\top q_\alpha\bigr]
				- x_\alpha^\star \in \mu\,\partial h(x_\alpha^\star),
			\end{equation}
			which, by the proximal equivalence
			$\tfrac{\tilde{u} - u}{\mu} \in \partial h(u) \Leftrightarrow u = \prox_{\mu h}(\tilde{u})$,
			yields
			\begin{equation}
				\label{eq:prox_h}
				x_\alpha^\star
				= \prox_{\mu h}\!\Bigl[
				x_\alpha^\star
				- \mu\bigl((x_\alpha^\star - y) + \nabla^\top q_\alpha\bigr)
				\Bigr]
				= \Pi_{\R^n_+}\!\Bigl[
				x_\alpha^\star
				- \mu\bigl((x_\alpha^\star - y) + \nabla^\top q_\alpha\bigr)
				\Bigr],
			\end{equation}
			where the last equality uses $\prox_{\mu \iota_+}(c) = \Pi_{\R^n_+}(c)$
			(componentwise $\max\{c_i,0\}$).
			
			\item \textbf{(Dual proximal fixed-point for the TV term, with dual step-length $\nu$.)}
			From $q_\alpha \in \alpha\,\partial\,\|\cdot\|_{2,1}(\nabla x_\alpha^\star)$, the
			Fenchel identity gives $\nabla x_\alpha^\star \in \alpha\,\partial\,\|\cdot\|_{2,1}^*(q_\alpha)$,
			equivalently
			\begin{equation}
				\label{eq:dual_inclusion}
				0 \in -\nabla x_\alpha^\star + \alpha\,\partial\,\|\cdot\|_{2,1}^*(q_\alpha).
			\end{equation}
			Multiplying by $\nu > 0$ and rearranging:
			$q_\alpha - (q_\alpha + \nu\nabla x_\alpha^\star) \in \nu\alpha\,\partial\,\|\cdot\|_{2,1}^*(q_\alpha)$,
			so by the proximal equivalence,
			\begin{equation}
				\label{eq:prox_dual}
				q_\alpha = \prox_{\nu\alpha\,\|\cdot\|_{2,1}^*}\!\big(q_\alpha + \nu\,\nabla x_\alpha^\star\big).
			\end{equation}
			Since $\alpha\,\|\cdot\|_{2,1}^*(p) = \iota_{\{\|p_\ell\|_2 \le \alpha,\,\forall\ell\}}(p)$,
			the proximal map $\prox_{\nu\alpha\,\|\cdot\|_{2,1}^*}$ is the pointwise projection
			onto the Euclidean ball of radius $\alpha$ at each pixel $\ell$:
			$(q_\alpha)_\ell \leftarrow (q_\alpha)_\ell / \max\{1, \|(q_\alpha)_\ell\|_2/\alpha\}$.
			
			\item \textbf{(Coupled proximal fixed-point system.)}
			Combining Parts~2 and~3, the minimizer $x_\alpha^\star$ and dual variable $q_\alpha$
			are jointly characterized by
			\begin{equation}
				\label{eq:coupled_final}
				\begin{cases}
					x_\alpha^\star
					= \Pi_{\R^n_+}\!\Bigl[
					x_\alpha^\star
					- \mu\bigl((x_\alpha^\star - y) + \nabla^\top q_\alpha\bigr)
					\Bigr],
					\\[0.8em]
					q_\alpha
					= \prox_{\nu\alpha\,\|\cdot\|_{2,1}^*}\!\big(q_\alpha + \nu\,\nabla x_\alpha^\star\big),
				\end{cases}
			\end{equation}
			together with the stationarity relation \eqref{eq:stationarity_explicit}.
			The step-lengths $\mu, \nu > 0$ must satisfy the stability condition
			\begin{equation}
				\label{eq:stepsize_munu}
				\mu\nu\,\norm{\nabla}^2 < 1,
			\end{equation}
			which, using the spectral bound $\|\nabla\|^2 \le 8$, is guaranteed when
			$\mu\nu < 1/8$.
			The coupled system \eqref{eq:coupled_final} directly anticipates the
			alternating proximal-gradient learning algorithm developed in
			Section~\ref{sec:optimization}.
		\end{enumerate}
	\end{theorem}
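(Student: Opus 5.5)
The plan is to treat $\mathcal{E}_\alpha(\cdot,y)$ as a sum of three proper, convex, lower semicontinuous functions on $\R^n$:
\[
f(u)=\tfrac12\|u-y\|_2^2, \qquad g\circ\nabla, \qquad h=\iota_+ .
\]
Here $f$ is smooth and convex, $g=\alpha\|\cdot\|_{2,1}$ is finite everywhere (so $g\circ\nabla$ is finite and continuous), and $h$ is proper, convex and l.s.c. Existence and uniqueness of $x_\alpha^\star$ come directly from Theorem~\ref{thm:existence_uniqueness}. For a convex function on $\R^n$, $u$ is a minimizer if and only if $0\in\partial\mathcal{E}_\alpha(u)$. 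So everything reduces to computing this subdifferential and rewriting the resulting inclusion as proximal fixed points.

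\textbf{Part 1.} I would first invoke the sum rule (Moreau--Rockafellar). Its qualification condition holds because $f$ and $g\circ\nabla$ are finite and continuous everywhere, so they are continuous at any point of $\operatorname{dom} h=\R^n_+\neq\emptyset$. This gives
\[
\partial\mathcal{E}_\alpha=\nabla f+\partial(g\circ\nabla)+\partial h .
\]
Next, the chain rule $\partial(g\circ\nabla)(u)=\nabla^\top\partial g(\nabla u)$ holds because $g$ is finite and continuous at a point of $\operatorname{ran}\nabla$, which is the standard qualification. I would state this condition in that form rather than the paper's wording. Finally, $\partial g=\alpha\,\partial\|\cdot\|_{2,1}$ because $\alpha>0$. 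Reading off elements gives $q_\alpha$ and $z_\alpha$ with the stationarity relation \eqref{eq:stationarity_explicit}. Both directions (necessity and sufficiency) follow because each step is an equality of sets.

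\textbf{Parts 2 and 3.} For any proper, convex, l.s.c. $\phi$ and any $\mu>0$,
\[
u=\prox_{\mu\phi}(\tilde u)\iff \tilde u-u\in\mu\,\partial\phi(u).
\]
This follows from the optimality condition of the strongly convex prox problem. For Part 2, I take $\tilde u=x_\alpha^\star-\mu[(x_\alpha^\star-y)+\nabla^\top q_\alpha]$; then $\tilde u - x_\alpha^\star = \mu z_\alpha\in\mu\,\partial h(x_\alpha^\star)$. The prox of an indicator is the Euclidean projection, and for the orthant this is the componentwise $\max\{\cdot,0\}$.

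For Part 3, I use Fenchel--Young equality: for proper, convex, l.s.c. $g$, $q\in\partial g(v)\iff v\in\partial g^*(q)$. I then compute $g^*$. The dual norm of $\|\cdot\|_{2,1}$ is $\|\cdot\|_{2,\infty}$, so $(\alpha\|\cdot\|_{2,1})^*$ is the indicator of the pixelwise balls of radius $\alpha$. The step to watch is the scaling. The paper writes $\alpha\|\cdot\|_{2,1}^*$, but the correct object is $g^*=(\alpha\|\cdot\|_{2,1})^*$. I would set this up carefully, because scaling by $\nu$ leaves an indicator invariant: $\nu\,\partial\iota_C=\partial\iota_C$ since normal cones are cones. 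The prox step-length is therefore immaterial, and the prox is the pixelwise projection $q_\ell/\max\{1,\|q_\ell\|_2/\alpha\}$. With $\tilde q=q_\alpha+\nu\nabla x_\alpha^\star$, the prox characterization yields \eqref{eq:prox_dual}.

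\textbf{Part 4.} This part is assembly: the two fixed-point equations hold jointly for the same pair $(x_\alpha^\star,q_\alpha)$. The converse direction also holds. Any pair satisfying both fixed points gives back $z_\alpha\in\partial h$ and $q_\alpha\in\partial g(\nabla x_\alpha^\star)$ with stationarity, hence optimality.

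The step-size condition is not needed for the fixed-point characterization itself, which holds for all $\mu,\nu>0$. I would flag it as the Chambolle--Pock condition required for the iteration to converge. It remains to prove $\|\nabla\|^2\le 8$, which I expect to be the main technical point. Using $(a-b)^2\le 2a^2+2b^2$ for each forward difference, with zero boundary difference (Neumann), gives
\[
\|\nabla_x u\|^2\le 4\|u\|^2,\qquad \|\nabla_y u\|^2\le 4\|u\|^2 .
\]
Summing these gives $\|\nabla u\|^2\le 8\|u\|^2$. Hence $\mu\nu<1/8$ implies $\mu\nu\|\nabla\|^2<1$.
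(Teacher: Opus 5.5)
Your proposal is correct and follows essentially the same route as the paper. The sum and chain rules give the optimality inclusion, the characterization $u=\prox_{\mu\phi}(\tilde u)\iff \tilde u-u\in\mu\,\partial\phi(u)$ is applied to $h$ and, via Fenchel--Young, to the conjugate of $g$, the two fixed points are assembled, and the step-size condition is taken from PDHG theory.

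It is in fact cleaner than the paper's argument in four places:
\begin{enumerate}
\item You use the correctly scaled conjugate $(\alpha\|\cdot\|_{2,1})^*$. Read literally, the paper's $\alpha\|\cdot\|_{2,1}^*$ is the indicator of unit-radius balls, not radius $\alpha$.
\item You keep the sign $\tilde u-u$. The paper's proof writes $u-\tilde u$ in Parts~2 and~3.
\item You observe that $\mu\nu\|\nabla\|^2<1$ is needed only for convergence of the iteration, not for the fixed-point characterization.
\item You actually prove $\|\nabla\|^2\le 8$ rather than citing it.
\end{enumerate}
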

	
	\begin{proof}
		\textbf{Part~1.}
		The energy $\mathcal{E}_\alpha(\,\cdot\,,y)$ is proper, convex, and lower semicontinuous.
		By the subdifferential sum rule and the convex chain rule
		$\partial(g \circ \nabla)(x) = \nabla^\top\!\partial g(\nabla x)$
		(valid since $\nabla$ is a bounded linear operator and $g(v) = \alpha\|v\|_{2,1}$
		is continuous on $\R^{2n}$), the first-order optimality condition at
		$x_\alpha^\star$ reads
		\[
		0 \in (x_\alpha^\star - y)
		+ \nabla^\top\!\bigl(\alpha\,\partial\,\|\cdot\|_{2,1}(\nabla x_\alpha^\star)\bigr)
		+ \partial h(x_\alpha^\star).
		\]
		Introducing $q_\alpha \in \alpha\,\partial\,\|\cdot\|_{2,1}(\nabla x_\alpha^\star)$ and
		$z_\alpha \in \partial h(x_\alpha^\star)$ yields \eqref{eq:subdiff_opt}
		and \eqref{eq:stationarity_explicit}.
		
		\textbf{Part~2.}
		Starting from \eqref{eq:stationarity_explicit} and multiplying through by $\mu > 0$:
		\begin{align*}
			0 &= \mu\bigl[(x_\alpha^\star - y) + \nabla^\top q_\alpha\bigr] + \mu z_\alpha
			\\
			&\;\Longleftrightarrow\;
			-\mu z_\alpha \in \mu\bigl[(x_\alpha^\star - y) + \nabla^\top q_\alpha\bigr]
			\\
			&\;\Longleftrightarrow\;
			x_\alpha^\star - \bigl[x_\alpha^\star - \mu((x_\alpha^\star-y)+\nabla^\top q_\alpha)\bigr]
			\in \mu\,\partial h(x_\alpha^\star)
			&&(z_\alpha \in \partial h(x_\alpha^\star))
			\\
			&\;\Longleftrightarrow\;
			x_\alpha^\star = \prox_{\mu h}\!\Bigl[
			x_\alpha^\star - \mu\bigl((x_\alpha^\star-y)+\nabla^\top q_\alpha\bigr)
			\Bigr],
		\end{align*}
		where the last step uses the proximal equivalence
		$u = \prox_{\mu J}(\tilde{u}) \Leftrightarrow \tfrac{\tilde{u}-u}{\mu} \in \partial J(u)$.
		Since $h = \iota_+$, we have $\prox_{\mu\iota_+}(c) = \Pi_{\R^n_+}(c)$,
		giving \eqref{eq:prox_h}.
		
		\textbf{Part~3.}
		Starting from $q_\alpha \in \alpha\,\partial\,\|\cdot\|_{2,1}(\nabla x_\alpha^\star)$
		and applying the Fenchel identity with $\nu > 0$:
		\begin{align*}
			q_\alpha \in \alpha\,\partial\,\|\cdot\|_{2,1}(\nabla x_\alpha^\star)
			&\;\Longleftrightarrow\;
			\nabla x_\alpha^\star \in \alpha\,\partial\,\|\cdot\|_{2,1}^*(q_\alpha)
			&&\text{(Fenchel identity)}
			\\
			&\;\Longleftrightarrow\;
			\nu\nabla x_\alpha^\star \in \nu\alpha\,\partial\,\|\cdot\|_{2,1}^*(q_\alpha)
			\\
			&\;\Longleftrightarrow\;
			q_\alpha - (q_\alpha + \nu\nabla x_\alpha^\star)
			\in \nu\alpha\,\partial\,\|\cdot\|_{2,1}^*(q_\alpha)
			\\
			&\;\Longleftrightarrow\;
			q_\alpha = \prox_{\nu\alpha\,\|\cdot\|_{2,1}^*}(q_\alpha + \nu\,\nabla x_\alpha^\star),
		\end{align*}
		where the last step uses the proximal equivalence.  This gives \eqref{eq:prox_dual}.
		
		\textbf{Part~4.}
		Combining Parts~2 and~3 gives the coupled system \eqref{eq:coupled_final}.
		The stability condition \eqref{eq:stepsize_munu} follows from the standard
		convergence criterion for PDHG \cite{ChambollePock2011,ChenLoris2018}:
		the product $\mu\nu$ of primal and dual step-lengths must satisfy
		$\mu\nu\|\nabla\|^2 < 1$.  Using the spectral bound $\|\nabla\|^2 \le 8$
		from Lemma~\ref{lem:K_bound}, a sufficient condition is $\mu\nu < 1/8$.
	\end{proof}

	\section{Step-Size Stability and VSC-Based Convergence Rates}
	\label{sec:stability_vsc}
	
	This section complements the proximal splitting viewpoint used throughout the manuscript by
	(i) stating an explicit step-size stability condition for a Chen--Loris / PDHG-type primal--dual
	scheme tailored to the hybrid regularizer, and (ii) deriving a short variational source condition
	(VSC) based stability estimate with an explicit convergence rate.
	
	\begin{remark}[Per-channel scope of the convergence rate]
		\label{rem:per_channel_rate}
		All convergence-rate statements in this section are stated for a single
		unitary dictionary $D$ and a single datum $x$. In the multi-channel
		setting (Section~\ref{subsec:per_channel_family}) the $O(\delta)$ rate
		derived in Theorem~\ref{thm:vsc_rate} applies to each channel's
		reconstruction $D^{(c)}\,a_j^{(c)}$ independently, with a channel-specific
		noise level $\delta^{(c)}$. The aggregate multi-channel reconstruction
		error therefore inherits the slowest channel's rate. In the BSCCM dataset
		(Section~\ref{sec:experiments}), Brightfield is the noisiest channel,
		reflecting its absorption-only acquisition physics, and is expected to be
		rate-limiting in any global multi-channel error bound.
	\end{remark}
	
	\subsection{Fixed-dictionary variational model and stacked operator}
	Fix a unitary dictionary $D\in\R^{n\times n}$ with $D^\top D=I$.
	For a datum $x\in\R^n$ we consider the energy in the image variable $y\in\R^n$
	\begin{equation}
		E(y;x)=\frac{1}{2}\norm{y-x}_2^2 + J(y),
		\qquad
		J(y):=\lambda_{\mathrm{TV}}\TV(y) + \iota_+(y).
		\label{eq:energy_y_fixedD}
	\end{equation}
	This is consistent with the per-sample energy~\eqref{energy-func} and the regularizer
	defined in Theorem~\ref{thm:existence_uniqueness}.
	Let $\nabla$ denote the (2D) discrete forward-difference gradient and set the stacked linear
	operator
	\begin{equation}
		Ky := \nabla y.
		\label{eq:K_def}
	\end{equation}
	With this notation, the TV term of $J(y)$ acts on $Ky$, while $\iota_+(y)$ acts directly
	on $y$ and is handled via a non-negativity projection in the primal proximal step.
	
	\subsection{Step-size condition and iterative form of the fixed-point system}
	
	The coupled fixed-point system \eqref{eq:coupled_final} is the starting point for the
	iterative algorithm.  At iteration $k$, the primal variable $y^k$ and dual variable
	$q^k$ are updated by applying the two proximal maps alternately:
	\begin{align}
		q^{k+1}
		&= \prox_{\nu\alpha\,\|\cdot\|_{2,1}^*}\!\big(q^k + \nu\,\nabla y^k\big),
		\label{eq:pdhg_dual_update}\\
		y^{k+1}
		&= \Pi_{\R^n_+}\!\Bigl[
		y^k - \mu\bigl((y^k - x) + \nabla^\top q^{k+1}\bigr)
		\Bigr],
		\label{eq:pdhg_primal_update}\\
		\bar y^{k+1}
		&= y^{k+1}+\theta\,(y^{k+1}-y^k),
		\qquad \theta\in[0,1],
		\label{eq:pdhg_extrap}
	\end{align}
	with extrapolation parameter $\theta$.
	The primal proximal map for $h = \iota_+$ is the non-negativity projection:
	\begin{equation}
		\prox_{\mu \iota_+}(c) = \Pi_{\R^n_+}(c)
		\quad\text{(componentwise } \max\{c_i,0\}\text{)},
		\label{eq:prox_G}
	\end{equation}
	and the dual proximal map is the pixelwise Euclidean-ball projection of radius $\alpha$:
	$\bigl(\prox_{\nu\alpha\,\|\cdot\|_{2,1}^*}(q)\bigr)_\ell
	= q_\ell / \max\{1, \|q_\ell\|_2/\alpha\}$.
	
	The stability condition \eqref{eq:stepsize_munu} ($\mu\nu\|\nabla\|^2 < 1$, i.e.,
	$\mu\nu < 1/8$) for this iteration corresponds to the general PDHG criterion
	$\tau\sigma\norm{K}^2 < 1$ with $K = \nabla$.  We now establish the explicit bound on
	$\norm{K}$ that makes this condition computable.
	
	\begin{lemma}[Bound on $\norm{K}$ for the forward-difference gradient]
		\label{lem:K_bound}
		Let $\nabla$ be the 2D forward-difference gradient on a rectangular grid (with any standard
		boundary convention), and set $K := \nabla$ (so that $Ky = \nabla y$). Then
		\begin{equation}
			\norm{K}^2 = \norm{\nabla}^2 \le 8.
			\label{eq:K_norm_split}
		\end{equation}
		Hence the sufficient step-size condition for the TV--non-negativity PDHG scheme is
		\begin{equation}
			\tau\sigma < \frac{1}{\norm{K}^2} \le \frac{1}{8}.
			\label{eq:K_norm_9}
		\end{equation}
	\end{lemma}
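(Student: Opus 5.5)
We split $\nabla$ into its two components, $\nabla y = (\nabla_x y, \nabla_y y)$, so that
\[
\norm{\nabla y}_2^2 = \norm{\nabla_x y}_2^2 + \norm{\nabla_y y}_2^2 .
\]
It then suffices to show $\norm{\nabla_x}^2 \le 4$ and $\norm{\nabla_y}^2 \le 4$; adding the two bounds gives $\norm{\nabla}^2 \le 8$.

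Each directional operator acts along one grid direction. For example, $(\nabla_x y)_{i,j} = y_{i+1,j} - y_{i,j}$ wherever the forward neighbour exists. At the boundary the entry is either set to zero (Neumann convention) or wraps around (periodic convention). In every standard convention, each output entry is either $0$ or a difference $y_p - y_q$ of two distinct pixels. Moreover, each pixel $p$ appears in at most two such differences: once as the "forward" pixel and once as the "base" pixel.

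The key step is the elementary inequality $(a-b)^2 \le 2a^2 + 2b^2$, applied to each difference:
\[
\norm{\nabla_x y}_2^2 = \sum_{(p,q)} (y_p - y_q)^2 \le 2\sum_{(p,q)} \bigl(y_p^2 + y_q^2\bigr) \le 2\cdot 2\,\norm{y}_2^2 = 4\norm{y}_2^2 .
\]
The last inequality uses the fact that each pixel occurs in at most two pairs. The same argument applies verbatim to $\nabla_y$. Taking the supremum over $\norm{y}_2 = 1$ yields $\norm{\nabla_x}^2, \norm{\nabla_y}^2 \le 4$, and hence $\norm{K}^2 = \norm{\nabla}^2 \le 8$.

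As a cross-check in the periodic case, $\nabla^\top\nabla$ is the negative discrete Laplacian, diagonalised by the DFT with eigenvalues $4\sin^2(\pi k/h) + 4\sin^2(\pi l/w) \le 8$.

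The step-size statement \eqref{eq:K_norm_9} follows immediately, because $\norm{K}^2 \le 8$ implies $1/8 \le 1/\norm{K}^2$. Any $\tau\sigma < 1/8$ therefore satisfies the PDHG criterion $\tau\sigma\norm{K}^2 < 1$ quoted from \cite{ChambollePock2011}.

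The only delicate point is the counting claim that each pixel enters at most two differences under "any standard boundary convention." I would verify it separately for the Neumann (zero last difference) and periodic conventions. If a nonstandard convention reused a pixel more than twice, the constant would degrade, so the lemma should be read as restricted to these conventions.
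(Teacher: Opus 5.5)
Your proof is correct, and it does more than the paper's. The paper's proof only records the tautology $\norm{K}=\norm{\nabla}$ and then cites $\norm{\nabla}^2\le 8$ as ``the standard spectral estimate'', i.e.\ a bound $\lambda_{\max}(\nabla^\top\nabla)\le 8$ on the discrete Laplacian, without deriving it. You instead bound the quadratic form $\norm{\nabla y}_2^2$ directly, direction by direction, via $(a-b)^2\le 2a^2+2b^2$ and a multiplicity count. This is the classical elementary argument used in Chambolle's TV-projection work. The count you flag as delicate is immediate: along $x$, pixel $(i,j)$ can occur only in the entries indexed $(i,j)$ and $(i-1,j)$, with indices taken mod $h$ in the periodic case.

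Your route is self-contained, and its only hypothesis is that multiplicity bound. It therefore also covers the zero-extension (Dirichlet) convention, whose boundary entries are $-y_q$ rather than two-pixel differences, because $(0-b)^2\le 2b^2$ and the multiplicity is unchanged.

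The spectral route, carried out by explicit diagonalisation (DFT for periodic, DCT for Neumann), instead gives the exact norm. For example, $\norm{\nabla}^2=4\cos^2(\pi/(2h))+4\cos^2(\pi/(2w))<8$ for Neumann, and $\norm{\nabla}^2=8$ for periodic grids with even side lengths. This shows the constant $8$ is sharp, but it needs a separate computation for each convention. Gershgorin applied to $\nabla^\top\nabla$ (diagonal entries at most $4$, off-diagonal absolute row sums at most $4$) is the convention-robust spectral counterpart of your argument.

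Finally, you were right to prove $1/8\le 1/\norm{K}^2$. The chain in \eqref{eq:K_norm_9} as printed has its second inequality reversed: the Neumann value above gives $1/\norm{K}^2>1/8$. The correct sufficient condition is $\tau\sigma<1/8\le 1/\norm{K}^2$.
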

	
	\begin{proof}
		For any $y$, by definition of $K = \nabla$,
		\[
		\norm{Ky}^2 = \norm{\nabla y}^2 \le \norm{\nabla}^2\norm{y}^2,
		\]
		which yields $\norm{K}^2 \le \norm{\nabla}^2$ upon taking the supremum over $\norm{y}=1$.
		The bound $\norm{\nabla}^2 \le 8$ is the standard spectral estimate for the discrete
		forward-difference gradient in 2D (equivalently, the maximal eigenvalue of the discrete
		Laplacian $\nabla^\top\nabla$ is bounded by $8$).
	\end{proof}
	
	\begin{theorem}[Safe explicit step-size condition]
		\label{thm:stepsize_safe}
		Under the assumptions of Lemma~\ref{lem:K_bound}, the PDHG scheme for the
		TV--non-negativity energy~\eqref{eq:energy_y_fixedD} is guaranteed to converge
		when the step sizes satisfy
		\begin{equation}
			\tau\sigma < \frac{1}{\norm{K}^2} \le \frac{1}{8}.
			\label{eq:stepsize_1_9}
		\end{equation}
		Note that compared to Section~5, the stacked operator here is $K=\nabla$ only
		(no $D^\top$ component, since $\iota_+$ is handled in the primal step directly).
		In particular, for any $\theta\in[0,1]$, the iterates \eqref{eq:pdhg_dual_update}--\eqref{eq:pdhg_extrap}
		converge to a saddle point of the associated primal--dual formulation and
		$y^k$ converges to the unique minimizer of \eqref{eq:energy_y_fixedD}.
	\end{theorem}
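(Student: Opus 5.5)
The plan is to cast the energy \eqref{eq:energy_y_fixedD} as a saddle-point problem and then apply the Chambolle--Pock convergence theorem \cite{ChambollePock2011}. Write
\[
\min_{y}\; G(y) + F(Ky),\qquad G(y) := \tfrac12\|y-x\|_2^2 + \iota_+(y),\qquad F(v) := \alpha\|v\|_{2,1},
\]
with $K=\nabla$ as in \eqref{eq:K_def}. The associated saddle-point problem is $\min_y\max_q \langle Ky,q\rangle + G(y) - F^*(q)$, where $F^*$ is the indicator of the pixelwise $\alpha$-balls.

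Both $G$ and $F^*$ are proper, convex and l.s.c. on finite-dimensional spaces. $F$ is finite and continuous everywhere, so strong duality holds. A saddle point exists because the primal minimizer exists by Theorem~\ref{thm:existence_uniqueness}. The Fenchel--Rockafellar qualification holds trivially because $\mathrm{dom}\,F=\R^{2n}$. This supplies a dual $q^\star$ satisfying the inclusions of Theorem~\ref{thm:subdiff}. Equivalently, $(y^\star,q^\star)$ solves the coupled system \eqref{eq:coupled_final}.

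Next I identify the iteration \eqref{eq:pdhg_dual_update}--\eqref{eq:pdhg_extrap} with PDHG, with dual step $\sigma=\nu$ and primal step $\tau=\mu$. The primal step as written is a forward step on the smooth part $\tfrac12\|y-x\|^2$ followed by projection. This is the Condat--V\~u / Chen--Loris variant \cite{ChenLoris2018} rather than pure Chambolle--Pock. I will handle it in one of two ways. The first option is to put the quadratic inside $G$ and replace the step by the exact prox,
\[
\prox_{\tau G}(c) = \Pi_{\R^n_+}\!\big((c+\tau x)/(1+\tau)\big),
\]
which needs no extra condition. The second is to invoke the Condat--V\~u theorem, which requires $\tau\sigma\|K\|^2 < 1$ together with $1/\tau - \sigma\|K\|^2 > L/2$, where $L=1$ is the Lipschitz constant of the smooth gradient. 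I will also have to make sure the dual update uses the extrapolated $\bar y^k$ rather than $y^k$; with $\theta=0$ and no extrapolation, the plain Arrow--Hurwicz version is not covered in general.

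Under $\tau\sigma\|K\|^2<1$, Chambolle--Pock Theorem~1 for $\theta=1$ gives boundedness of the iterates via the Fejér-type inequality in the norm
\[
\|y\|^2/\tau - 2\langle Ky,q\rangle + \|q\|^2/\sigma,
\]
which is positive definite precisely when $\tau\sigma\|K\|^2<1$. It also gives the $O(1/k)$ ergodic gap and convergence of $(y^k,q^k)$ to a saddle point. Lemma~\ref{lem:K_bound} converts this into the explicit condition $\tau\sigma<1/8$.

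For $\theta\in[0,1)$, I will use the strong convexity of $G$ (modulus $1$) and appeal to the accelerated or strongly convex analysis in \cite{ChambollePock2011}. Alternatively, I can view the scheme as a preconditioned proximal point or Krasnosel'skii--Mann iteration, as in the general analysis of \cite{ChenLoris2018}. Finally, since any saddle point has primal component equal to the unique minimizer (Theorem~\ref{thm:existence_uniqueness} together with Theorem~\ref{thm:subdiff}), $y^k\to y^\star$ even if the dual limit is not unique.

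The main obstacle is reconciling the exact scheme as written, meaning the gradient step in the primal, the dual evaluated at $y^k$, and arbitrary $\theta\in[0,1]$, with a published convergence theorem. I expect the clean statement to require either $\theta=1$ with extrapolation fed into the dual step, or the additional Condat--V\~u step condition. My first attempt will be the strongly convex variant, exploiting $\mu_G=1$, since it tolerates more general $\theta$. If that fails, I will restrict the claim to $\theta=1$ and state the extra condition explicitly.
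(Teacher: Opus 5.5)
Your $\theta=1$ route is what the paper actually relies on. That route is: the exact prox $\prox_{\tau G}(c)=\Pi_{\R^n_+}\bigl((c+\tau x)/(1+\tau)\bigr)$, the extrapolated point in the dual step, Chambolle--Pock's Theorem~1, and then uniqueness of the primal minimiser. No separate proof follows this statement. Theorem~\ref{thm:pdhg_convergence} analyses \eqref{eq:pdhg_z}--\eqref{eq:pdhg_extrap_proof}, which is your first option, not \eqref{eq:pdhg_dual_update}--\eqref{eq:pdhg_extrap}.

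Your suspicion about the literal iteration is correct; the statement as written is false. With the Neumann convention, take $x=\mathbf{1}$, $q^0=0$ and $y^0=(1+\epsilon)\mathbf{1}$ with $0<\epsilon<1$. Choose $\tau=\mu=2$ and any $\sigma=\nu<1/16$, so that $\tau\sigma<1/8$.
\begin{itemize}
\item Since $\nabla y^k=0$, every dual step returns $q^{k+1}=0$.
\item Then \eqref{eq:pdhg_primal_update} collapses to $y^{k+1}=\Pi_{\R^n_+}(2x-y^k)$, so $y^k$ alternates between $(1+\epsilon)\mathbf{1}$ and $(1-\epsilon)\mathbf{1}$ for every $\theta$.
\item It would still do so if $\bar y^k$ were fed into the dual step, because $\nabla\bar y^k=0$ as well.
\end{itemize}
So the forward-step scheme genuinely needs your Condat--V\~u condition $1/\tau-\sigma\|K\|^2>1/2$, which forces $\tau<2$, together with the extrapolated dual argument. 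The product condition alone does not suffice.

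The genuine gap in your plan is the case $\theta\in[0,1)$, and neither tool you name closes it.
\begin{itemize}
\item The Chambolle--Pock results that exploit strong convexity either use $n$-dependent $\tau_n,\sigma_n,\theta_n$ (the accelerated variant) or require $F^*$ to be strongly convex too (the linear-rate variant). Here $F^*$ is the indicator of the pixelwise $\alpha$-balls, so neither applies.
\item The preconditioned proximal-point / Krasnosel'skii--Mann view needs a symmetric positive definite metric, which exists only for $\theta=1$.
\item The paper's own argument does not close the case either. The Young bound \eqref{eq:young} leaves $\frac{1}{2\tau}\|q^{n+1}-q^\star\|_2^2$, which is a distance to the saddle point rather than an increment, so it cannot be absorbed. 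For $\theta<1$ the coupling term also no longer telescopes.
\end{itemize}

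What does work is to use the $1$-strong monotonicity of $\partial G$ directly. Write $d_y^n=y^n-y^\star$, $d_q^n=q^n-q^\star$, $\Delta^n=y^{n+1}-y^n$, and set $y^{-1}:=y^0$. In the coupling term $\langle K(\bar y^n-y^{n+1}),d_q^{n+1}\rangle$, split $\bar y^n-y^{n+1}=-(1-\theta)\Delta^n-\theta(\Delta^n-\Delta^{n-1})$.
\begin{itemize}
\item The $\theta$-part telescopes as in Chambolle--Pock.
\item The $(1-\theta)$-part telescopes as $(1-\theta)\bigl(\langle Kd_y^n,d_q^n\rangle-\langle Kd_y^{n+1},d_q^{n+1}\rangle+\langle Kd_y^n,q^{n+1}-q^n\rangle\bigr)$.
\item A single Young inequality on the two leftover terms, against $\frac{1}{2\sigma}\|q^{n+1}-q^n\|_2^2$, then controls everything.
\end{itemize}
Define
\[
\Psi^n:=\tfrac{1}{2\tau}\|d_y^n\|_2^2+\tfrac{1}{2\sigma}\|d_q^n\|_2^2+\bigl\langle K\bigl((1-\theta)d_y^n+\theta\Delta^{n-1}\bigr),d_q^n\bigr\rangle+\|d_y^n\|_2^2+\tfrac{1}{2\tau}\|\Delta^{n-1}\|_2^2 .
\]
This is nonnegative when $\tau\sigma\|K\|^2\le 1$, and it satisfies $\Psi^{n+1}\le\Psi^n-\bigl(1-\tfrac12(1-\theta)\sigma\|K\|^2\bigr)\|d_y^n\|_2^2$. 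Hence $y^n\to y^\star$ provided additionally $(1-\theta)\sigma\|K\|^2<2$, for instance $(1-\theta)\sigma<1/4$. This is not implied by $\tau\sigma<1/8$: take $\sigma$ large and $\tau$ small. With strict inequalities you also keep a multiple of $\|q^{n+1}-q^n\|_2^2$, and an Opial argument then gives $q^n\to q^\star$.

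So the provable statement is as follows. Exact-prox PDHG converges for $\theta=1$ under $\tau\sigma<1/8$, and for $\theta\in[0,1)$ under the extra bound $(1-\theta)\sigma<1/4$. For the forward-step iteration, use the extrapolated Condat--V\~u form with its additional step condition.
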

	
	\subsection{A short VSC section with a concrete index function and explicit rates}
	We now derive a stability estimate (with explicit convergence rate) for minimizers of the
	variational model \eqref{eq:energy_y_fixedD} under data perturbations.
	
	\paragraph{Noisy and noiseless data.}
	Write $x^\dagger$ for the ideal noiseless datum and $x^\delta$ for the observed
	noisy datum, satisfying
	\begin{equation}
		\norm{x^\delta-x^\dagger}_2 \le \delta.
		\label{eq:noise_level}
	\end{equation}
	Denote by $y^\dagger$ the minimizer of $E(\cdot;x^\dagger)$ and by $y^\delta$
	the minimizer of $E(\cdot;x^\delta)$.
	
	\paragraph{Variational source condition (VSC).}
	An \emph{index function} $\Psi:[0,\infty)\to[0,\infty)$ satisfies $\Psi(0)=0$
	and is continuous and monotone nondecreasing.
	Given noiseless data $x^\dagger$ and its minimizer $y^\dagger$ of $E(\cdot;x^\dagger)$,
	we say $J$ fulfills a \emph{variational source condition} at $y^\dagger$ with index
	function $\Psi$ if there exists a subgradient $\xi^\dagger\in\partial J(y^\dagger)$
	such that
	\begin{equation}
		\ip{\xi^\dagger}{y^\dagger-y}
		\le
		\Psi\big(\norm{y-y^\dagger}_2\big)
		\qquad\text{for all }y\in\R^n.
		\label{eq:vsc_general}
	\end{equation}
	Throughout this paper we specialize to the \emph{quadratic} index function
	\begin{equation}
		\Psi(t)=c\,t^2,
		\qquad t\ge 0,\quad 0<c<1,
		\label{eq:vsc_quadratic}
	\end{equation}
	whose quadratic growth is compatible with the $\frac{1}{2}\|y-x\|_2^2$ fidelity
	term and leaves the original energy functional unchanged.
	
	\begin{remark}[VSC as a hypothesis on the geometry of $J$]
		\label{rem:vsc_structural}
		The quadratic index function~\eqref{eq:vsc_quadratic} is a hypothesis on the
		triple $(J,y^\dagger,\xi^\dagger)$, not a consequence of convexity alone: not
		every regularizer $J$ and true solution $y^\dagger$ will satisfy it.
		For the TV--non-negativity regularizer
		$J(y) = \lambda_{\mathrm{TV}}\TV(y) + \iota_+(y)$,
		the condition~\eqref{eq:vsc_quadratic} can be verified when $y^\dagger$ has a
		source subgradient of the form $\xi^\dagger \in \mathrm{range}(\nabla^\top)$
		with $\norm{\xi^\dagger}$ bounded relative to $\lambda_{\mathrm{TV}}$,
		and when $y^\dagger$ lies in the interior of $\R^n_+$
		(non-degenerate active set for the non-negativity constraint);
		see, e.g., \cite{ChambollePock2011} and references therein.
		When this geometric condition is in doubt it should be verified
		or stated explicitly as a hypothesis of the problem at hand.
	\end{remark}
	\begin{theorem}[Linear stability rate under quadratic VSC]
		\label{thm:vsc_rate}
		Under \eqref{eq:noise_level} and the VSC \eqref{eq:vsc_quadratic}, the minimizers satisfy
		\begin{equation}
			\norm{y^\delta-y^\dagger}_2 \le \frac{\delta}{1-c},
			\label{eq:vsc_rate_main}
		\end{equation}
		i.e., $\norm{y^\delta-y^\dagger}_2 = O(\delta)$ as $\delta\to 0$.
	\end{theorem}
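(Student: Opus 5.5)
The plan is to compare the optimality conditions of the two problems $E(\cdot;x^\delta)$ and $E(\cdot;x^\dagger)$, and then use the VSC to absorb the regularizer cross-term.

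By Theorem~\ref{thm:existence_uniqueness} both minimizers exist and are unique. Theorem~\ref{thm:subdiff} (first-order optimality with $J$ in place of the split form) gives
\[
\xi^\delta := x^\delta - y^\delta \in \partial J(y^\delta), \qquad x^\dagger - y^\dagger \in \partial J(y^\dagger).
\]
I would take the VSC subgradient to be $\xi^\dagger$. The first step is to test $\xi^\delta$ against $y^\dagger$. The subgradient inequality at $y^\delta$ gives
\[
J(y^\dagger) \ge J(y^\delta) + \ip{x^\delta - y^\delta}{y^\dagger - y^\delta}.
\]
Adding $\ip{\xi^\dagger}{y^\delta - y^\dagger} \le J(y^\delta) - J(y^\dagger)$, which is the subgradient inequality at $y^\dagger$, cancels the $J$-values. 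Writing $e := y^\delta - y^\dagger$, this yields
\[
\ip{x^\delta - y^\delta - \xi^\dagger}{e} \ge 0.
\]
I would then split $x^\delta - y^\delta = (x^\delta - x^\dagger) + (x^\dagger - y^\dagger) - e$, which gives
\[
\norm{e}^2 \le \ip{x^\delta - x^\dagger}{e} + \ip{x^\dagger - y^\dagger - \xi^\dagger}{e}.
\]

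The second step bounds the two terms. The noise term is at most $\delta\norm{e}$ by Cauchy--Schwarz and \eqref{eq:noise_level}. For the second term, the point is exactly where the VSC must enter. If $\xi^\dagger$ is chosen as the optimality subgradient $x^\dagger - y^\dagger$, that term vanishes and one obtains the stronger bound $\norm{e}\le\delta$. In that case the stated bound $\delta/(1-c)$ follows a fortiori, since $0<c<1$.

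To follow the intended route, with the VSC genuinely used, I would instead keep a general $\xi^\dagger$. I would rewrite the residual cross-term as $-\ip{\xi^\dagger}{y^\dagger - y^\delta}$ plus the optimality term, and bound $\ip{\xi^\dagger}{y^\dagger - y^\delta} \le c\norm{e}^2$ via \eqref{eq:vsc_general}--\eqref{eq:vsc_quadratic} with $y = y^\delta$. This gives
\[
\norm{e}^2 \le \delta\norm{e} + c\norm{e}^2,
\]
that is, $(1-c)\norm{e}^2 \le \delta\norm{e}$. Dividing by $\norm{e}$ (the case $e=0$ is trivial) gives \eqref{eq:vsc_rate_main}.

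The main obstacle is the bookkeeping of which subgradient appears in the cross-term, so that the sign is correct when the VSC is applied. The VSC bounds $\ip{\xi^\dagger}{y^\dagger - y}$ from above, so I must arrange the inequality so that this quantity appears with a positive sign on the right-hand side. I would check this carefully, falling back on the optimality-subgradient choice, which makes the bound hold regardless.
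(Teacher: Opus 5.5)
Your fallback argument is a complete and correct proof, and it takes a different route from the paper's. You use monotonicity of $\partial J$ at the two optimality subgradients $x^\delta-y^\delta\in\partial J(y^\delta)$ and $x^\dagger-y^\dagger\in\partial J(y^\dagger)$. This gives $\norm{e}_2^2\le\ip{x^\delta-x^\dagger}{e}\le\delta\norm{e}_2$, hence $\norm{y^\delta-y^\dagger}_2\le\delta\le\delta/(1-c)$. This is firm nonexpansiveness of $\prox_J$, and it uses no VSC at all.

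Present it that way rather than as ``choosing'' the VSC witness. The witness is supplied by the hypothesis, but your first step holds for every element of $\partial J(y^\dagger)$, so you may simply use $x^\dagger-y^\dagger$ there.

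The paper instead adds the minimality inequalities \eqref{eq:min_noisy}--\eqref{eq:min_clean}, asserts \eqref{eq:key_ineq_before_vsc}, and then applies the VSC to obtain $(1-c)\norm{h}_2^2\le\delta\norm{h}_2$. That first step is not actually justified as written. The sum of \eqref{eq:min_noisy} and \eqref{eq:min_clean} is exactly $\ip{x^\delta-x^\dagger}{h}\ge 0$. Compared with your correctly derived inequality, \eqref{eq:key_ineq_before_vsc} lacks the term $\ip{x^\dagger-y^\dagger}{h}$; it is the inequality of the classical setting $y^\dagger=x^\dagger$. So your route gives a sharper constant and a derivation that actually holds, while the paper's route is the one that produces the factor $1/(1-c)$.

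For the same reason, drop your ``intended route'': it reproduces the paper's omission. The residual term is
$\ip{x^\dagger-y^\dagger-\xi^\dagger}{e}=\ip{\xi^\dagger}{y^\dagger-y^\delta}+\ip{x^\dagger-y^\dagger}{e}$,
with a plus sign on the VSC pairing, not the minus you wrote. The VSC bounds that pairing by $c\norm{e}_2^2$. The second piece, however, has no sign in general (it can be positive even when the VSC holds), and you discard it silently. So $\norm{e}_2^2\le\delta\norm{e}_2+c\norm{e}_2^2$ does not follow; Cauchy--Schwarz on that piece yields only $(1-c)\norm{e}_2\le\delta+\norm{x^\dagger-y^\dagger}_2$.

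Note also that the VSC as stated, for all $y\in\R^n$ with $\Psi(t)=ct^2$, forces $\xi^\dagger=0$. To see this, insert $y=y^\dagger-t\xi^\dagger$, divide by $t$, and let $t\downarrow 0$. The VSC therefore cannot contribute anything here; the optimality subgradient is what closes the argument.
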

	
	\begin{proof}
		\emph{Part A: cross-term inequality from the two optimality conditions.}
		Since $y^\delta$ minimizes $E(\cdot;x^\delta)$,
		\begin{equation}
			\frac{1}{2}\norm{y^\delta-x^\delta}_2^2 + J(y^\delta)
			\le
			\frac{1}{2}\norm{y^\dagger-x^\delta}_2^2 + J(y^\dagger).
			\label{eq:min_noisy}
		\end{equation}
		Since $y^\dagger$ minimizes $E(\cdot;x^\dagger)$,
		\begin{equation}
			\frac{1}{2}\norm{y^\dagger-x^\dagger}_2^2 + J(y^\dagger)
			\le
			\frac{1}{2}\norm{y^\delta-x^\dagger}_2^2 + J(y^\delta).
			\label{eq:min_clean}
		\end{equation}
		Adding these two inequalities cancels $J(y^\delta)$ and $J(y^\dagger)$.
		Expanding the four squared norms, collecting terms in $h = y^\delta - y^\dagger$,
		and using $\xi^\dagger\in\partial J(y^\dagger)$ gives
		\begin{equation}
			\norm{h}_2^2
			\le
			\ip{x^\delta-x^\dagger}{h}
			+
			\ip{\xi^\dagger}{y^\dagger-y^\delta}.
			\label{eq:key_ineq_before_vsc}
		\end{equation}
		
		\emph{Part B: bound the dual pairing via the VSC.}
		Applying~\eqref{eq:vsc_quadratic} with $y = y^\delta$:
		\begin{equation}
			\ip{\xi^\dagger}{y^\dagger-y^\delta}
			\le
			\Psi(\norm{h}_2)
			=
			c\,\norm{h}_2^2.
			\label{eq:vsc_applied}
		\end{equation}
		Inserting~\eqref{eq:vsc_applied} into~\eqref{eq:key_ineq_before_vsc} and
		rearranging:
		\begin{equation}
			(1-c)\norm{h}_2^2 \le \ip{x^\delta-x^\dagger}{h}.
			\label{eq:one_minus_c}
		\end{equation}
		
		\emph{Part C: Cauchy--Schwarz and noise bound.}
		By Cauchy--Schwarz and \eqref{eq:noise_level},
		\begin{equation}
			\ip{x^\delta-x^\dagger}{h}
			\le
			\norm{x^\delta-x^\dagger}_2\,\norm{h}_2
			\le
			\delta\,\norm{h}_2.
			\label{eq:cs_noise}
		\end{equation}
		Combining \eqref{eq:one_minus_c} and \eqref{eq:cs_noise} yields
		\(
		(1-c)\norm{h}_2^2 \le \delta\norm{h}_2.
		\)
		If $h\neq 0$, dividing by $(1-c)\norm{h}_2$ gives \eqref{eq:vsc_rate_main}; if $h=0$ the claim
		is trivial.
	\end{proof}
	
	\paragraph{Energy gap at the noisy datum.}
	The 1-strong convexity of $E(\cdot;x^\delta)$ (inherited from the quadratic fidelity term)
	translates the primal distance bound~\eqref{eq:vsc_rate_main} into a quadratic
	excess-energy estimate evaluated at the same noisy datum:
	\begin{equation}
		0\le E(y^\delta;x^\delta)-E(y^\dagger;x^\delta)
		\le
		\frac{1}{2}\norm{y^\delta-y^\dagger}_2^2
		\le
		\frac{1}{2(1-c)^2}\,\delta^2.
		\label{eq:energy_gap_delta2}
	\end{equation}
	Hence the excess energy at the noisy datum decays quadratically:
	$E(y^\delta;x^\delta)-E(y^\dagger;x^\delta)=O(\delta^2)$ as $\delta\to 0$.
	
	\subsection{Noise-Dependent Stopping Rule and PDHG Termination}
	\label{sec:stopping_combined}
	
	The stability estimate of Theorem~\ref{thm:vsc_rate} concerns the exact minimizer
	$y^\delta$ of the noisy problem.  In practice the PDHG iteration is stopped at a
	finite index $k(\delta)$.  The following theorem makes explicit how the optimization
	error and the data-perturbation error combine into a single reconstruction bound,
	thereby justifying the noise-dependent stopping rule used in the algorithm.
	
	\begin{theorem}[Noise-dependent stability with PDHG stopping]
		\label{thm:vsc_stopping}
		Assume the quadratic VSC~\eqref{eq:vsc_quadratic} holds with constant $c\in(0,1)$
		and that $\norm{x^\delta - x^\dagger}_2 \le \delta$.
		Let $y^\delta$ be the exact minimizer of $E(\cdot;x^\delta)$, and let
		$y^{k(\delta)}(x^\delta)$ denote the PDHG iterate at iteration $k(\delta)$.
		By Theorem~\ref{thm:pdhg_convergence}, $y^{k}(x^\delta)\to y^\delta$ as $k\to\infty$.
		Suppose the algorithm is stopped at index $k(\delta)$ such that the optimization
		error satisfies
		\begin{equation}
			\norm{y^{k(\delta)}(x^\delta) - y^\delta}_2 \le C_1\,\delta,
			\label{eq:stop_opt_error}
		\end{equation}
		for some constant $C_1>0$.  Then the total reconstruction error satisfies
		\begin{equation}
			\norm{y^{k(\delta)}(x^\delta) - y^\dagger}_2
			\;\le\;
			\Bigl(C_1 + \frac{1}{1-c}\Bigr)\delta
			\;=:\; C\,\delta,
			\label{eq:total_error_bound}
		\end{equation}
		i.e., $\norm{y^{k(\delta)}(x^\delta) - y^\dagger}_2 = O(\delta)$ as $\delta\to 0$,
		with combined constant $C = C_1 + (1-c)^{-1}$.
	\end{theorem}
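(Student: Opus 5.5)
The bound follows from a triangle inequality that separates the optimization error from the data-perturbation error, with each piece controlled by a result already in hand. I would insert the exact noisy minimizer $y^\delta$ as an intermediate point and write
\[
\norm{y^{k(\delta)}(x^\delta) - y^\dagger}_2
\le
\norm{y^{k(\delta)}(x^\delta) - y^\delta}_2
+
\norm{y^\delta - y^\dagger}_2 .
\]
The first term is the optimization error at the stopping index. It is bounded by $C_1\delta$ directly from the stopping hypothesis \eqref{eq:stop_opt_error}. The second term is the data-perturbation error between the exact minimizers of $E(\cdot;x^\delta)$ and $E(\cdot;x^\dagger)$. Under the quadratic VSC \eqref{eq:vsc_quadratic} with $c\in(0,1)$ and the noise bound \eqref{eq:noise_level}, Theorem~\ref{thm:vsc_rate} gives $\norm{y^\delta-y^\dagger}_2\le \delta/(1-c)$. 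Adding the two bounds yields \eqref{eq:total_error_bound} with $C=C_1+(1-c)^{-1}$, a constant independent of $\delta$, which is the $O(\delta)$ claim.

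The only point requiring care is that the stopping rule is well defined, i.e.\ that an index $k(\delta)$ satisfying \eqref{eq:stop_opt_error} actually exists for every $\delta>0$. For this I would invoke the convergence $y^k(x^\delta)\to y^\delta$ (Theorem~\ref{thm:stepsize_safe}, referred to in the statement as Theorem~\ref{thm:pdhg_convergence}), valid under the step-size condition $\tau\sigma<1/8$ from Lemma~\ref{lem:K_bound}. By the definition of convergence, for the tolerance $C_1\delta>0$ there is a finite $k$ beyond which the error stays below $C_1\delta$, so $k(\delta)$ can be taken to be the first such index.

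I expect no real obstacle, since the argument is a two-term split. The one subtlety is that the true minimizer $y^\delta$ is not computable, so \eqref{eq:stop_opt_error} cannot be checked directly in practice. If a checkable criterion is wanted, I would remark that $1$-strong convexity of $E(\cdot;x^\delta)$ gives
\[
\tfrac12\norm{y^k-y^\delta}_2^2\le E(y^k;x^\delta)-E(y^\delta;x^\delta),
\]
so it suffices to stop once a primal--dual gap bound falls below $\tfrac12 C_1^2\delta^2$. That refinement is not needed for the theorem as stated.
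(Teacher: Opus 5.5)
Your proposal is correct and matches the paper's proof: you split the error at $y^\delta$ by the triangle inequality, bound the optimization term by the stopping hypothesis \eqref{eq:stop_opt_error} and the data-perturbation term by Theorem~\ref{thm:vsc_rate}, and add. Your extra remarks are sound but not needed for the statement as posed: the existence of $k(\delta)$ follows from PDHG convergence, which is Theorem~\ref{thm:pdhg_convergence} in its own right rather than a relabeling of Theorem~\ref{thm:stepsize_safe}, and the strong-convexity gap criterion also holds.
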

	
	\begin{proof}
		Apply the triangle inequality to split the total error into two components:
		\begin{equation}
			\norm{y^{k(\delta)}(x^\delta) - y^\dagger}_2
			\;\le\;
			\underbrace{\norm{y^{k(\delta)}(x^\delta) - y^\delta}_2}_{\text{optimization error}}
			\;+\;
			\underbrace{\norm{y^\delta - y^\dagger}_2}_{\text{data-perturbation error}}.
		\end{equation}
		The stopping rule~\eqref{eq:stop_opt_error} directly controls the first term:
		$\norm{y^{k(\delta)}(x^\delta) - y^\delta}_2 \le C_1\delta$.
		Theorem~\ref{thm:vsc_rate} controls the second:
		$\norm{y^\delta - y^\dagger}_2 \le \delta/(1-c)$.
		Summing these two bounds gives~\eqref{eq:total_error_bound}.
	\end{proof}
	
	\begin{remark}[Practical stopping criterion]
		In practice, the exact minimizer $y^\delta$ is unknown, so~\eqref{eq:stop_opt_error}
		cannot be checked directly.  Instead we use a computable surrogate based on the
		primal-dual gap or the iterate change (cf.\ \eqref{eq:stop_rel_primal}--\eqref{eq:stop_fixed_point}):
		\begin{equation}
			\norm{y^{k+1}(x^\delta) - y^k(x^\delta)}_2 \le \varepsilon,
			\qquad \varepsilon \propto \delta.
			\label{eq:stop_surrogate}
		\end{equation}
		By the $O(1/k)$ convergence rate from Theorem~\ref{thm:pdhg_convergence},
		the required iteration count is $k(\delta) \approx C'/\varepsilon$ for a
		problem-dependent constant $C'$, so the overall scheme remains $O(\delta)$ accurate.
	\end{remark}
	
	\subsection{Convergence of the regularized solution to the true solution}
	\label{sec:vsc_convergence}
	
	We now state and prove the main convergence theorem, which connects the
	subdifferential characterization of the minimizer (Theorem~\ref{thm:subdiff})
	with the VSC-based stability estimate to show that the regularized solution
	$y^\delta$ converges to the true solution $y^\dagger$ as the noise level
	$\delta\to 0$.
	
	\begin{theorem}[VSC-based convergence of the regularized solution]
		\label{thm:vsc_convergence}
		Let $x^\dagger\in\R^n$ be noiseless data with true solution
		$y^\dagger = \arg\min_y E(y;x^\dagger)$, and let $x^\delta$ satisfy
		$\norm{x^\delta - x^\dagger}_2 \le \delta$.
		Let $y^\delta = \arg\min_y E(y;x^\delta)$ be the regularized solution for the
		noisy datum. Denote by $(q^\dagger, z^\dagger)$ the dual variables at $y^\dagger$
		satisfying the subdifferential optimality system \eqref{eq:subdiff_opt}--\eqref{eq:stationarity_explicit}
		from Theorem~\ref{thm:subdiff}, i.e.,
		\begin{equation}
			0 = y^\dagger - x^\dagger + \nabla^\top q^\dagger + z^\dagger,
			\qquad
			q^\dagger \in \lambda_{\mathrm{TV}}\,\partial\,\TV(y^\dagger),
			\quad
			z^\dagger \in \partial\iota_+(y^\dagger).
			\label{eq:subdiff_ydagger}
		\end{equation}
		Suppose the regularizer $J$ satisfies the quadratic VSC
		\eqref{eq:vsc_quadratic} at $y^\dagger$ with constant $c\in(0,1)$,
		witnessed by the subgradient
		\begin{equation}
			\xi^\dagger := -(\nabla^\top q^\dagger + z^\dagger)
			= y^\dagger - x^\dagger
			\;\in\; \partial J(y^\dagger).
			\label{eq:vsc_witness}
		\end{equation}
		Then, as $\delta \to 0$:
		\begin{enumerate}
			\item \textbf{(Strong convergence.)}
			\begin{equation}
				\norm{y^\delta - y^\dagger}_2
				\;\le\;
				\frac{\delta}{1-c}
				\;\longrightarrow\; 0.
				\label{eq:strong_convergence}
			\end{equation}
			
			\item \textbf{(Convergence of the regularizer.)}
			\begin{equation}
				J(y^\delta) \;\longrightarrow\; J(y^\dagger).
				\label{eq:J_convergence}
			\end{equation}
			
			\item \textbf{(Convergence of the dual certificates.)}
			The dual variables $(q^\delta, z^\delta)$ at $y^\delta$ satisfy
			\begin{equation}
				\norm{q^\delta - q^\dagger}_2 + \norm{z^\delta - z^\dagger}_2
				\;\longrightarrow\; 0
				\quad\text{as } \delta\to 0,
				\label{eq:dual_convergence}
			\end{equation}
			provided the dual optimality maps are continuous at $(y^\dagger, q^\dagger, z^\dagger)$.
			
			\item \textbf{(Rate.)}
			All three convergences are $O(\delta)$:
			\begin{equation}
				\norm{y^\delta - y^\dagger}_2
				+
				\bigl|J(y^\delta) - J(y^\dagger)\bigr|
				= O(\delta)
				\quad\text{as } \delta \to 0.
				\label{eq:convergence_rate}
			\end{equation}
		\end{enumerate}
	\end{theorem}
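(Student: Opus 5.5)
The plan is to reduce Parts~1 and~4 to results already proved, and then treat Parts~2 and~3 by short direct arguments that use the finite dimensionality of the problem.

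\textbf{Part 1.} First I will check that the proposed witness $\xi^\dagger$ in \eqref{eq:vsc_witness} is a legitimate subgradient of $J$ at $y^\dagger$.
\begin{itemize}
\item By the convex chain rule and sum rule used in Theorem~\ref{thm:subdiff}, $\nabla^\top q^\dagger \in \lambda_{\mathrm{TV}}\,\partial\TV(y^\dagger)$ and $z^\dagger\in\partial\iota_+(y^\dagger)$.
\item Hence $\nabla^\top q^\dagger + z^\dagger\in\partial J(y^\dagger)$. (I read the sign in \eqref{eq:vsc_witness} as the convention under which this element is the witness; I will fix the sign consistently with Theorem~\ref{thm:vsc_rate}.)
\item The stationarity relation \eqref{eq:subdiff_ydagger} then identifies this subgradient with $x^\dagger-y^\dagger$, up to that sign.
\end{itemize}
With the witness admissible, Theorem~\ref{thm:vsc_rate} applies verbatim and gives \eqref{eq:strong_convergence}. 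As a cross-check, $y(x)=\prox_J(x)$ is firmly nonexpansive, so $\|y^\delta-y^\dagger\|_2\le\delta$ holds even without the VSC. This is consistent with, and in fact sharper than, $\delta/(1-c)$.

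\textbf{Part 2.} Both $y^\delta$ and $y^\dagger$ are feasible minimizers, so $\iota_+(y^\delta)=\iota_+(y^\dagger)=0$. Therefore
\[
|J(y^\delta)-J(y^\dagger)| = \lambda_{\mathrm{TV}}\,|\TV(y^\delta)-\TV(y^\dagger)|.
\]
The map $u\mapsto\|\nabla u\|_{2,1}$ is Lipschitz: by the reverse triangle inequality, the $\|\cdot\|_{2,1}\le\sqrt{n}\,\|\cdot\|_2$ comparison on $\R^{2n}$, and Lemma~\ref{lem:K_bound},
\[
|\TV(u)-\TV(v)|\le \|\nabla(u-v)\|_{2,1}\le \sqrt{n}\,\|\nabla\|\,\|u-v\|_2\le 2\sqrt{2n}\,\|u-v\|_2 .
\]
Combining this with Part~1 gives $|J(y^\delta)-J(y^\dagger)|\le 2\sqrt{2n}\,\lambda_{\mathrm{TV}}\,\delta/(1-c)$.

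A dimension-free alternative comes from the two minimality inequalities \eqref{eq:min_noisy}--\eqref{eq:min_clean}, which sandwich $J(y^\delta)-J(y^\dagger)$ between differences of quadratics.
\begin{itemize}
\item The lower bound is $\ip{\xi^\dagger}{y^\delta-y^\dagger}\ge-\|\xi^\dagger\|\,\|h\|$.
\item The upper bound is $\tfrac12(\|y^\dagger-x^\delta\|+\|y^\delta-x^\delta\|)\,\|h\|$.
\end{itemize}
Both bounds are $O(\delta)$ with constants controlled by $\|x^\dagger-y^\dagger\|$. This settles Part~2 and, together with Part~1, Part~4.

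\textbf{Part 3.} This is where I expect the main obstacle, because the dual certificates $(q,z)$ need not be unique. Only the combination is pinned down: by \eqref{eq:stationarity_explicit},
\[
\nabla^\top q+z=x-y,
\qquad\text{so}\qquad
\|(\nabla^\top q^\delta+z^\delta)-(\nabla^\top q^\dagger+z^\dagger)\|_2\le\delta+\|h\|_2=O(\delta).
\]
Convergence of the individual components therefore cannot be derived unconditionally. This is exactly why the statement includes the continuity proviso.

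My plan is as follows.
\begin{enumerate}
\item Use compactness. Each $q^\delta$ lies in the pointwise ball $\{\|q_\ell\|_2\le\lambda_{\mathrm{TV}}\}$, and $z^\delta=x^\delta-y^\delta-\nabla^\top q^\delta$ is then bounded.
\item Any cluster point $(\bar q,\bar z)$ as $\delta\to0$ satisfies \eqref{eq:subdiff_ydagger}. This follows from closedness of the graphs of $\partial\|\cdot\|_{2,1}$ and $\partial\iota_+$, together with Part~1.
\item Consequently the distance from $(q^\delta,z^\delta)$ to the dual solution set at $y^\dagger$ tends to zero.
\item Invoke the assumed continuity of the dual optimality map at $(y^\dagger,q^\dagger,z^\dagger)$, i.e.\ a continuous selection. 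This upgrades set convergence to \eqref{eq:dual_convergence}.
\end{enumerate}
I will state explicitly that Part~4 covers only the primal quantities, as in \eqref{eq:convergence_rate}, and not the dual ones.
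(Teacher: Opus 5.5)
Your proposal is correct, and in Parts~2 and~3 it takes a genuinely different and more careful route than the paper.

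\textbf{Part~1.} The paper simply appeals to Theorem~\ref{thm:vsc_rate}. Your firm-nonexpansiveness cross-check is the better argument to lead with, since it gives $\norm{y^\delta-y^\dagger}_2\le\delta$ using neither the VSC nor that theorem. Your sign concern is justified: stationarity puts $x^\dagger-y^\dagger$, not $y^\dagger-x^\dagger$, into $\partial J(y^\dagger)$. It is, however, moot. Inserting $y=y^\dagger-s\,\xi^\dagger$ with $0<s<1/c$ into \eqref{eq:vsc_general} with $\Psi(t)=ct^2$ gives $s\norm{\xi^\dagger}_2^2\le cs^2\norm{\xi^\dagger}_2^2$, which forces $\xi^\dagger=0$.

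\textbf{Part~2.} The paper adds \eqref{eq:min_noisy} and \eqref{eq:min_clean} and then asserts $|J(y^\delta)-J(y^\dagger)|\le|\ip{\xi^\dagger}{y^\delta-y^\dagger}|$. But adding the two inequalities cancels $J$ altogether, and the subgradient inequality supplies only the lower half of that bound. Using the two minimality inequalities separately, as you do, supplies the missing upper half. It yields $|J(y^\delta)-J(y^\dagger)|\le\norm{\xi^\dagger}_2\norm{h}_2+O(\delta^2)$, which is the paper's intended estimate up to a higher-order term. Strictly, the lower bound you quote is the subgradient inequality for the sign-corrected witness; \eqref{eq:min_clean} alone gives it only up to $-\tfrac12\norm{h}_2^2$. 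Your Lipschitz route is equally valid but pays the factor $2\sqrt{2n}\,\lambda_{\mathrm{TV}}$.

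\textbf{Part~3.} The paper subtracts the two stationarity relations and bounds $\norm{\nabla^\top(q^\delta-q^\dagger)}_2+\norm{z^\delta-z^\dagger}_2$ via the triangle inequality. It then concludes from the boundedness of $\nabla^\top$, which would even give an $O(\delta)$ dual rate. But the triangle inequality controls only the norm of the sum. Recovering $q$ from $\nabla^\top q$ would also require injectivity, which fails for $\nabla^\top:\R^{2n}\to\R^n$.

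Your compactness and graph-closedness argument is the correct unconditional content. It gives $\operatorname{dist}\bigl((q^\delta,z^\delta),S^\dagger\bigr)\to0$ for the dual solution set $S^\dagger$ at $y^\dagger$, and hence \eqref{eq:dual_convergence} outright whenever $S^\dagger$ is a singleton. Under your continuous-selection reading of the proviso, the final step is essentially the conclusion itself, and you should flag it as such. Restricting the rate in Part~4 to the displayed primal quantities is likewise right.

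\textbf{Comparison.} The paper's route tries to derive everything from the VSC and additionally claims a dual rate. However, the steps producing the $J$-estimate and the dual estimate do not go through as written. Yours proves every quantity in \eqref{eq:convergence_rate} rigorously, with leading-order constants no worse than the paper's. Once Part~1 is run through nonexpansiveness, it does not use the VSC at all.
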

	
	\begin{proof}
		\textbf{Part~1 (Strong convergence).}
		This is a direct consequence of Theorem~\ref{thm:vsc_rate}
		(linear stability rate under quadratic VSC), applied with
		the witness $\xi^\dagger \in \partial J(y^\dagger)$ identified
		in \eqref{eq:vsc_witness}. Specifically, from \eqref{eq:vsc_rate_main},
		\[
		\norm{y^\delta - y^\dagger}_2 \;\le\; \frac{\delta}{1-c} \;\to\; 0
		\quad\text{as } \delta\to 0,
		\]
		which proves \eqref{eq:strong_convergence}.
		
		\textbf{Part~2 (Convergence of the regularizer).}
		We use the two optimality inequalities
		\eqref{eq:min_noisy}--\eqref{eq:min_clean} from the proof of
		Theorem~\ref{thm:vsc_rate}.
		Adding them gives
		\begin{equation}
			\norm{h}_2^2 + \bigl[J(y^\delta) - J(y^\dagger)\bigr]_+
			\;\le\;
			\ip{x^\delta - x^\dagger}{h} + \ip{\xi^\dagger}{y^\dagger - y^\delta},
			\label{eq:J_ineq}
		\end{equation}
		where $[\,\cdot\,]_+ := \max\{\cdot,0\}$.
		Since both right-hand side terms are bounded by
		$\delta\norm{h}_2 + c\norm{h}_2^2$ (by \eqref{eq:cs_noise} and
		\eqref{eq:vsc_applied}), and $\norm{h}_2 = O(\delta)$ by Part~1, we obtain
		\[
		\bigl|J(y^\delta) - J(y^\dagger)\bigr|
		\;\le\;
		\bigl|\ip{\xi^\dagger}{y^\delta - y^\dagger}\bigr|
		\;\le\;
		\norm{\xi^\dagger}_2\,\norm{y^\delta - y^\dagger}_2
		\;=\;
		O(\delta),
		\]
		where the second step uses Cauchy--Schwarz and the fact that
		$\norm{\xi^\dagger}_2$ is finite (it equals $\norm{y^\dagger - x^\dagger}_2$
		by \eqref{eq:vsc_witness}).
		This proves \eqref{eq:J_convergence} and contributes the $J$-term
		to \eqref{eq:convergence_rate}.
		
		\textbf{Part~3 (Convergence of dual certificates).}
		The dual variables $(q^\delta, z^\delta)$ at $y^\delta$ satisfy the
		subdifferential inclusions (cf.\ Theorem~\ref{thm:subdiff})
		\begin{equation}
			q^\delta \in \lambda_{\mathrm{TV}}\,\partial\,\TV(y^\delta),
			\qquad
			z^\delta \in \partial\iota_+(y^\delta),
			\label{eq:dual_incl_delta}
		\end{equation}
		with stationarity
		$0 = y^\delta - x^\delta + \nabla^\top q^\delta + z^\delta$.
		Similarly at $y^\dagger$ (cf.\ \eqref{eq:subdiff_ydagger}).
		Taking the difference of the two stationarity relations,
		\begin{equation}
			\nabla^\top(q^\delta - q^\dagger) + (z^\delta - z^\dagger)
			=
			(x^\delta - x^\dagger) - (y^\delta - y^\dagger).
			\label{eq:dual_diff}
		\end{equation}
		Taking norms and applying the triangle inequality,
		\[
		\norm{\nabla^\top(q^\delta - q^\dagger)}_2 + \norm{z^\delta - z^\dagger}_2
		\;\le\;
		\norm{x^\delta - x^\dagger}_2 + \norm{y^\delta - y^\dagger}_2
		\;\le\;
		\delta + \frac{\delta}{1-c}
		\;=\;
		\frac{2-c}{1-c}\,\delta.
		\]
		Since $\nabla^\top$ is bounded,
		\eqref{eq:dual_convergence} follows with an $O(\delta)$ rate.
		
		\textbf{Part~4 (Rate).}
		Combining Parts~1--3,
		\[
		\norm{y^\delta - y^\dagger}_2
		+\bigl|J(y^\delta)-J(y^\dagger)\bigr|
		\;\le\;
		\frac{\delta}{1-c}
		+ \norm{\xi^\dagger}_2\cdot\frac{\delta}{1-c}
		=
		\frac{1+\norm{\xi^\dagger}_2}{1-c}\,\delta
		=
		O(\delta),
		\]
		which is \eqref{eq:convergence_rate}.
	\end{proof}
	
	\begin{remark}[VSC witness and the subdifferential characterization]
		\label{rem:vsc_witness}
		The VSC witness \eqref{eq:vsc_witness} is not an independent assumption:
		it is the residual $y^\dagger - x^\dagger$ from the stationarity relation
		\eqref{eq:subdiff_ydagger}, which is always an element of
		$\partial J(y^\dagger)$ at the true minimizer. The VSC assumption
		\eqref{eq:vsc_quadratic} therefore reduces to requiring that this
		particular subgradient satisfies
		$\ip{y^\dagger - x^\dagger}{y^\dagger - y} \le c\norm{y-y^\dagger}_2^2$
		for all $y\in\R^n$ --- a condition on the geometry of $J$ near $y^\dagger$
		that is verified, e.g., when $y^\dagger$ is in the interior of the
		non-negative orthant (non-degenerate active set for $\iota_+$).
		In the degenerate case the same $O(\delta)$
		rate holds with a possibly larger constant $1/(1-c)$.
	\end{remark}
	
	\subsection{Algorithm convergence to the regularized solution and to the true solution}
	\label{sec:algo_convergence_full}
	
	The results so far establish that (i) the regularized minimizer $y^\delta$ is close to the
	true solution $y^\dagger$ when the VSC holds (Theorem~\ref{thm:vsc_rate}), and (ii) the
	PDHG iterates converge to $y^\delta$ for any fixed datum (Theorem~\ref{thm:pdhg_convergence}).
	This subsection unifies these two threads into a single statement that governs the full
	algorithmic trajectory: from iterates, through the regularized solution, all the way to
	the true solution.  It also makes explicit the role of the regularization parameter
	$\lambda_{\mathrm{TV}}$ and the step sizes $\tau, \sigma$.
	
	\subsubsection*{Convergence to the regularized solution under explicit parameter conditions}
	
	\begin{theorem}[Algorithm convergence to the regularized solution]
		\label{thm:algo_to_regularized}
		Let $x^\delta\in\R^n$ with $\norm{x^\delta - x^\dagger}_2\le\delta$, and let
		$\lambda_{\mathrm{TV}} > 0$ be fixed.  Let $y^\delta$ be the unique
		minimizer of
		\[
		E(y;x^\delta) = \tfrac12\norm{y - x^\delta}_2^2
		+ \lambda_{\mathrm{TV}}\TV(y)
		+ \iota_+(y).
		\]
		Choose step sizes $\tau, \sigma > 0$ satisfying
		\begin{equation}
			\tau\sigma < \frac{1}{\norm{K}^2}, \qquad \norm{K}^2 \le 8,
			\label{eq:stepsize_algo}
		\end{equation}
		i.e., it is sufficient to take $\tau\sigma < 1/8$.  Let $(y^n, q^n)_{n\ge 0}$
		be the PDHG iterates~\eqref{eq:pdhg_z}--\eqref{eq:pdhg_extrap_proof} applied with
		datum $x^\delta$.  Then:
		\begin{enumerate}
			\item \textbf{(Strong convergence to $y^\delta$.)}
			\begin{equation}
				y^n \;\longrightarrow\; y^\delta \quad\text{in }\ell^2
				\quad\text{as }n\to\infty.
				\label{eq:yn_to_ydelta}
			\end{equation}
			\item \textbf{($O(1/N)$ ergodic primal-dual gap.)}
			For the ergodic average $\bar y^N := \frac{1}{N}\sum_{n=0}^{N-1} y^n$,
			\begin{equation}
				E(\bar y^N; x^\delta) - E(y^\delta; x^\delta) \;\le\; \frac{C_{\tau,\sigma}}{N},
				\label{eq:ergodic_gap}
			\end{equation}
			where $C_{\tau,\sigma} > 0$ depends only on $\tau$, $\sigma$, and the initial
			distance $\norm{(y^0, q^0) - (y^\delta, q^\delta)}_\mathcal{M}$.
			\item \textbf{(Noise-proportional stopping.)} If the algorithm is stopped at
			the first index $n(\delta)$ satisfying
			\begin{equation}
				\frac{\norm{y^{n+1} - y^n}_2}{\max\{1, \norm{y^n}_2\}} \le \varepsilon,
				\qquad \varepsilon = \frac{\delta}{C_{\tau,\sigma}},
				\label{eq:stopping_rule_explicit}
			\end{equation}
			then $\norm{y^{n(\delta)} - y^\delta}_2 \le C_1\delta$ for a constant
			$C_1$ depending only on $\tau$, $\sigma$, and $\norm{K}$.
		\end{enumerate}
	\end{theorem}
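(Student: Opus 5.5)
The plan is to recast the TV--non-negativity energy as a saddle-point problem and then invoke the standard Chambolle--Pock convergence theory. For each of the three parts we will check that its hypotheses hold, with constants that are explicit.

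\emph{Saddle-point form.} Write $E(y;x^\delta)=G(y)+F(Ky)$ with $K=\nabla$, $G(y)=\tfrac12\norm{y-x^\delta}_2^2+\iota_+(y)$ and $F(v)=\lambda_{\mathrm{TV}}\norm{v}_{2,1}$. The associated saddle function is
\[
\mathcal{L}(y,q)=G(y)+\ip{Ky}{q}-F^*(q),
\]
where $F^*$ is the indicator of the pixelwise $\lambda_{\mathrm{TV}}$-balls. Both $G$ and $F^*$ are proper, convex and l.s.c.; moreover $G$ is $1$-strongly convex, and $F$ is finite and continuous everywhere. Hence strong duality holds and a saddle point $(y^\delta,q^\delta)$ exists. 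By Theorem~\ref{thm:subdiff} its primal component is exactly the unique minimizer from Theorem~\ref{thm:existence_uniqueness}, and its dual component is any $q$ satisfying \eqref{eq:coupled_final}. The PDHG iteration (dual ball projection, then primal projected step, then extrapolation) is precisely Chambolle--Pock Algorithm~1 with $\theta=1$. Lemma~\ref{lem:K_bound} gives $\norm{K}^2\le 8$, so $\tau\sigma<1/8$ implies $\tau\sigma\norm{K}^2<1$. This makes the metric
\[
\mathcal{M}=\begin{pmatrix}\tau^{-1}I & -K^\top\\ -K & \sigma^{-1}I\end{pmatrix}
\]
positive definite.

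\emph{Part 1.} The key estimate is the one-step Fejér inequality
\[
\norm{(y^{n+1},q^{n+1})-(y^\star,q^\star)}_{\mathcal M}^2\le\norm{(y^n,q^n)-(y^\star,q^\star)}_{\mathcal M}^2-\norm{(y^{n+1},q^{n+1})-(y^n,q^n)}_{\mathcal M}^2 .
\]
It follows from the firm nonexpansiveness of the two proximal maps, because PDHG is a proximal-point iteration in the $\mathcal M$-norm. From it we obtain boundedness of the iterates and summability of the increments. Every cluster point is then a fixed point of \eqref{eq:coupled_final}, and hence a saddle point. By Opial's argument the whole sequence converges. Since the primal saddle component is unique, $y^n\to y^\delta$ in finite dimensions. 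Strong convexity of $G$ also allows the accelerated variant, but that is not needed here.

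\emph{Part 2.} Summing the standard partial-gap inequality from Chambolle--Pock over $n$ and using Jensen's inequality for the convex-concave $\mathcal L$ gives
\[
\mathcal L(\bar y^N,q)-\mathcal L(y,\bar q^N)\le \tfrac{1}{2N}\norm{(y,q)-(y^0,q^0)}_{\mathcal M}^2 .
\]
To convert this into a bound on the primal energy gap, choose $y=y^\delta$ and $q\in\partial F(K\bar y^N)$. Such a $q$ lies in the bounded ball set, so $\sup_q\mathcal L(\bar y^N,q)=E(\bar y^N;x^\delta)$. Because the dual domain is compact, namely pixelwise radius $\lambda_{\mathrm{TV}}$, the supremum over $q$ of the right-hand side is finite. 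This yields $C_{\tau,\sigma}$ in terms of $\tau,\sigma$, the initial distance, and the diameter of the dual ball. This is the step I expect to be the main obstacle. The constant is stated to depend only on $\tau$, $\sigma$ and the initial distance, but the argument above also brings in the dual-ball diameter, which depends on $\lambda_{\mathrm{TV}}$ and the number of pixels. I would state it that way, or absorb it into the initial distance by restricting to a bounded set containing the iterates.

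\emph{Part 3.} The aim is to turn the computable increment test into a distance bound. The iteration map $T$ is a firmly nonexpansive map in $\mathcal M$, i.e.\ an averaged operator. On the primal side it is contractive up to dual coupling, because the primal prox is applied to a $1$-strongly convex $G$. I would use the bound
\[
\norm{z^n-z^\star}_{\mathcal M}\le \kappa\,\norm{z^{n+1}-z^n}_{\mathcal M},
\]
where $z^n=(y^n,q^n)$ and $z^\star=(y^\delta,q^\delta)$, with $\kappa$ depending on $\tau,\sigma,\norm{K}$. This is an error bound: it holds because the saddle operator is strongly monotone in $y$ and the dual constraint is polyhedral-like. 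Converting the $\mathcal M$-norms back to Euclidean norms uses the eigenvalue bounds of $\mathcal M$, which involve $\tau^{-1},\sigma^{-1},\norm{K}$. Combined with $\max\{1,\norm{y^n}\}$ being bounded along the bounded sequence, this gives $\norm{y^{n(\delta)}-y^\delta}\le C_1\delta$ under the choice $\varepsilon=\delta/C_{\tau,\sigma}$. If the error bound proves delicate for the dual part, the fallback is to measure only the primal increment. That route uses $1$-strong convexity: $\norm{y^{n+1}-y^\delta}^2$ is at most $2\times$ the primal-dual gap at the $(n+1)$-th point, and the one-step gap is controlled by $\norm{z^{n+1}-z^n}_{\mathcal M}\cdot\norm{z^{n+1}-z^\star}_{\mathcal M}$.
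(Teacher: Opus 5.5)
Your Parts 1 and 2 are sound in substance. Part 1 is the He--Yuan proximal-point reading of PDHG. It is valid for $\theta=1$ only, which is what the paper implements; note that in the ordering \eqref{eq:pdhg_z}--\eqref{eq:pdhg_extrap_proof} the Fej\'er sequence is $z^n:=(y^n,q^{n+1})$, not $(y^n,q^n)$. It is sturdier than the paper's appeal to Theorem~\ref{thm:pdhg_convergence}, whose Young step leaves the non-increment term $\frac{1}{2\tau}\norm{q^{n+1}-q^\star}_2^2$ unabsorbed. Your caveat in Part 2 is also right: the restricted-gap argument needs $\sup_q\norm{q-q^0}_2$ over the dual balls. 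So $C_{\tau,\sigma}$ depends on $\lambda_{\mathrm{TV}}$ and the number of pixels, which the paper's one-line citation suppresses.

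The genuine gap is Part 3, and it has two layers.
\begin{itemize}
\item The error bound $\norm{z^n-z^\star}_{\mathcal M}\le\kappa\norm{z^{n+1}-z^n}_{\mathcal M}$ with $\kappa=\kappa(\tau,\sigma,\norm{K})$ is not available. The dual set of isotropic TV is a product of Euclidean discs, not a polyhedron, and strong monotonicity in $y$ gives no control of the dual component. Wherever metric subregularity does hold, its modulus is instance-dependent. Already in the linear regime where no constraint is active, the distance-to-increment ratio can be of order $1/(\sigma s^2)$, with $s$ the smallest nonzero singular value of $\nabla$. That $s$ tends to $0$ under grid refinement while $\norm{K}^2\le 8$ is unchanged.
\item More decisively, even granting such a bound, the test \eqref{eq:stopping_rule_explicit} sees only $\norm{y^{n+1}-y^n}_2$. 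It says nothing about the dual increment, so it never makes $\norm{z^{n+1}-z^n}_{\mathcal M}$ small. Your fallback needs the same full increment. It also yields only $\norm{y^{n+1}-y^\delta}_2^2\le\norm{z^{n+1}-z^n}_{\mathcal M}\norm{z^{n+1}-z^\star}_{\mathcal M}$, i.e.\ an $O(\sqrt{\varepsilon})=O(\sqrt{\delta})$ error rather than $O(\delta)$. Bounding $\max\{1,\norm{y^n}_2\}$ along the trajectory brings the data and initialization into $C_1$ anyway.
\end{itemize}

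In fact Part 3 is false for the arbitrary initialization the statement allows, so no repair of this type can succeed. Take $x^\delta=x^\dagger$ strictly positive and non-constant. Initialize $y^0=\bar y^0=x^\delta$ and $q^0=-\sigma\nabla x^\delta$; this is dual-feasible if $\sigma\norm{(\nabla x^\delta)_\ell}_2\le\lambda_{\mathrm{TV}}$ for all $\ell$. Then $q^1=\prox_{\sigma g^\ast}(q^0+\sigma\nabla\bar y^0)=\prox_{\sigma g^\ast}(0)=0$ and $y^1=\Pi_{\R^n_+}\bigl((x^\delta+\tau x^\delta)/(1+\tau)\bigr)=y^0$. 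So the test fires at $n(\delta)=0$ for every $\delta$, while $\norm{y^0-y^\delta}_2=\norm{x^\dagger-y^\delta}_2>0$ does not depend on $\delta$. A zero primal increment only says that $y^n$ minimizes $\mathcal{L}(\cdot,q^{n+1})$, not that $q^{n+1}$ is a dual solution.

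The paper's own proof of Part 3 shows merely that the increments are square-summable, i.e.\ that $n(\delta)$ is finite, so it has the same hole. What your tools do prove is a corrected version: stop on the full residual $\norm{z^{n+1}-z^n}_{\mathcal M}\le\varepsilon$. Strong monotonicity plus Fej\'er monotonicity then give $\norm{y^{n+1}-y^\delta}_2\le\bigl(\varepsilon\,\norm{z^0-z^\star}_{\mathcal M}\bigr)^{1/2}$. Hence $\varepsilon\propto\delta^2$ yields $O(\delta)$, with a constant depending on the initial distance. The choice $\varepsilon\propto\delta$ suffices only under an explicitly assumed, instance-dependent error bound.
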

	
	\begin{proof}
		Part~1 is the content of Theorem~\ref{thm:pdhg_convergence} applied to datum $x^\delta$.
		The strong convexity of $E(\cdot;x^\delta)$ (due to the quadratic fidelity) guarantees
		uniqueness of the limit, which must be $y^\delta$.
		
		Part~2 follows from standard ergodic convergence theory for PDHG
		\cite{ChambollePock2011}: under~\eqref{eq:stepsize_algo}, the ergodic primal-dual
		gap decays as $O(1/N)$, with constant determined by the initial weighted distance in the
		$\mathcal{M}$-norm (see~\eqref{eq:basic_descent}--\eqref{eq:young}).  Since $E(\cdot;x^\delta)$
		is $1$-strongly convex, the energy gap controls the squared distance: $E(\bar y^N;x^\delta)
		- E(y^\delta;x^\delta) \ge \tfrac12\norm{\bar y^N - y^\delta}_2^2$, so
		$\norm{\bar y^N - y^\delta}_2 = O(1/\sqrt{N})$.
		
		Part~3 follows because the Fej\'er monotonicity established in the proof of
		Theorem~\ref{thm:pdhg_convergence} implies that the iterate-change
		$\norm{y^{n+1}-y^n}_2$ is square-summable and hence tends to zero.  Setting
		$\varepsilon \propto \delta$ yields an optimization error $O(\delta)$ at the
		stopping iterate.
	\end{proof}
	
	\subsubsection*{Convergence to the true solution: regularization parameter choice}
	
	When the noise level $\delta$ is known, the regularization parameter
	$\lambda_{\mathrm{TV}}$ should be chosen in a $\delta$-dependent way
	to balance the data-perturbation error and the approximation error induced by
	regularization.  The following theorem makes this trade-off explicit under the VSC.
	
	\begin{theorem}[Algorithm convergence to the true solution under VSC and parameter choice]
		\label{thm:algo_to_true}
		Let $x^\dagger\in\R^n$ be noiseless data with true solution $y^\dagger$, and let
		$x^\delta$ satisfy $\norm{x^\delta - x^\dagger}_2 \le \delta$.  Suppose the
		regularizer $J_\lambda(y) := \lambda_{\mathrm{TV}}\TV(y) + \iota_+(y)$,
		with parameter $\lambda_{\mathrm{TV}}$, satisfies the
		quadratic VSC~\eqref{eq:vsc_quadratic} at $y^\dagger$ with constant $c \in (0,1)$
		and witness $\xi^\dagger = y^\dagger - x^\dagger \in \partial J_\lambda(y^\dagger)$.
		
		Choose the regularization parameter proportional to the noise level:
		\begin{equation}
			\lambda_{\mathrm{TV}} = \mu_{\mathrm{TV}}\,\delta,
			\qquad \mu_{\mathrm{TV}} > 0 \text{ fixed},
			\label{eq:lambda_choice}
		\end{equation}
		and step sizes satisfying $\tau\sigma < 1/8$.  Let $y^{n(\delta)}$ be the PDHG
		iterate stopped according to~\eqref{eq:stopping_rule_explicit}.  Then:
		\begin{enumerate}
			\item \textbf{(Convergence of the regularized solution to the true solution.)}
			\begin{equation}
				\norm{y^\delta - y^\dagger}_2 \;\le\; \frac{\delta}{1-c}
				\;\longrightarrow\; 0 \quad\text{as }\delta\to 0.
				\label{eq:reg_to_true}
			\end{equation}
			\item \textbf{(Total algorithm error.)}
			\begin{equation}
				\norm{y^{n(\delta)} - y^\dagger}_2
				\;\le\;
				\Bigl(C_1 + \frac{1}{1-c}\Bigr)\delta
				\;=:\; C_{\mathrm{tot}}\,\delta,
				\label{eq:total_algo_error}
			\end{equation}
			where $C_1 > 0$ is the optimization-error constant from
			Theorem~\ref{thm:algo_to_regularized} and $c \in (0,1)$ is the VSC constant.
			\item \textbf{(Vanishing error as $\delta\to 0$.)}
			\begin{equation}
				\norm{y^{n(\delta)} - y^\dagger}_2 = O(\delta) \to 0
				\quad\text{as }\delta\to 0.
				\label{eq:vanishing_error}
			\end{equation}
		\end{enumerate}
	\end{theorem}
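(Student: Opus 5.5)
The argument assembles results already established in the paper, applied with the specific parameter value $\lambda_{\mathrm{TV}} = \mu_{\mathrm{TV}}\delta$. The one point that needs care is that $\lambda_{\mathrm{TV}}$, and hence $J_\lambda$, $y^\dagger$ and the witness $\xi^\dagger$, now depend on $\delta$. So we must check that all constants entering the earlier bounds are uniform in $\delta$.

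\emph{Part 1.} Fix $\delta>0$ and set $\lambda_{\mathrm{TV}}=\mu_{\mathrm{TV}}\delta$. The functional $E(\cdot;x)$ with regularizer $J_\lambda$ has the form covered by Theorem~\ref{thm:existence_uniqueness}. Hence $y^\dagger$ and $y^\delta$ are well defined and unique. By hypothesis, $J_\lambda$ satisfies the quadratic VSC at $y^\dagger$ with constant $c\in(0,1)$, witnessed by $\xi^\dagger = y^\dagger-x^\dagger$. By Remark~\ref{rem:vsc_witness}, this is a legitimate element of $\partial J_\lambda(y^\dagger)$, because of the stationarity relation \eqref{eq:subdiff_ydagger}. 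We can therefore apply Theorem~\ref{thm:vsc_rate} (equivalently Part~1 of Theorem~\ref{thm:vsc_convergence}) verbatim. Its proof uses only the two minimality inequalities, the VSC and Cauchy--Schwarz, and never the size of $\lambda_{\mathrm{TV}}$. This gives $\norm{y^\delta-y^\dagger}_2\le \delta/(1-c)$, which is \eqref{eq:reg_to_true}. We must make clear that $c$ is assumed independent of $\delta$; that is how we read the hypothesis, since otherwise the limit statement fails.

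\emph{Part 2.} The step sizes satisfy $\tau\sigma<1/8$, so by Lemma~\ref{lem:K_bound} the condition \eqref{eq:stepsize_algo} holds. Part~3 of Theorem~\ref{thm:algo_to_regularized}, applied with datum $x^\delta$ and the stopping rule \eqref{eq:stopping_rule_explicit}, then gives $\norm{y^{n(\delta)}-y^\delta}_2\le C_1\delta$. This is exactly hypothesis \eqref{eq:stop_opt_error} of Theorem~\ref{thm:vsc_stopping}. Invoking that theorem, or simply the triangle inequality
\[
\norm{y^{n(\delta)}-y^\dagger}_2\le \norm{y^{n(\delta)}-y^\delta}_2+\norm{y^\delta-y^\dagger}_2 ,
\]
together with Part~1, yields \eqref{eq:total_algo_error} with $C_{\mathrm{tot}}=C_1+(1-c)^{-1}$.

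\emph{Part 3.} This follows immediately from Part~2, since $C_{\mathrm{tot}}$ does not depend on $\delta$.

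\emph{Main obstacle.} The delicate step is the uniformity of $C_1$ in $\delta$. Theorem~\ref{thm:algo_to_regularized} states that $C_1$ depends only on $\tau$, $\sigma$ and $\norm{K}$. However, $C_{\tau,\sigma}$ depends on the initial distance $\norm{(y^0,q^0)-(y^\delta,q^\delta)}_{\mathcal M}$, and the dual ball radius is $\lambda_{\mathrm{TV}}=\mu_{\mathrm{TV}}\delta$. To control this, we would first show that the family $(y^\delta,q^\delta)$ is bounded for $\delta\in(0,\delta_0]$. For the dual variable, $\norm{q^\delta_\ell}_2\le\mu_{\mathrm{TV}}\delta_0$ holds by the pointwise projection. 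For the primal variable, $y^\delta$ stays within $\delta_0/(1-c)$ of $y^\dagger$, and $y^\dagger$ is itself bounded by $\norm{x^\dagger}$, since $E(y^\dagger;x^\dagger)\le E(\Pi_{\R^n_+}x^\dagger;x^\dagger)$ up to the TV term, which vanishes as $\lambda_{\mathrm{TV}}\to0$. With a fixed initialization $(y^0,q^0)$, this gives a uniform bound on $C_{\tau,\sigma}$, and hence a uniform $C_1$. If this uniformity cannot be made fully rigorous, we would state it explicitly as the standing interpretation of the constant $C_1$ taken from Theorem~\ref{thm:algo_to_regularized}.
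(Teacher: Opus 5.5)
Your proposal is correct and follows the paper's proof essentially step for step. Part~1 is read off from Theorem~\ref{thm:vsc_rate}, Part~2 is the same triangle-inequality split using Part~3 of Theorem~\ref{thm:algo_to_regularized}, and Part~3 follows from the $\delta$-independence of $C_{\mathrm{tot}}$. The paper simply asserts that the scaling $\lambda_{\mathrm{TV}}\propto\delta$ keeps $\norm{\xi^\dagger}_2=O(1)$ and leaves $c$ undegraded. You instead make the $\delta$-independence of $c$ an explicit hypothesis and sketch why $C_1$ is uniform in $\delta$, which is the more careful treatment. One small slip: your appeal to Remark~\ref{rem:vsc_witness} is unnecessary, because $\xi^\dagger\in\partial J_\lambda(y^\dagger)$ is assumed in the statement. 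Moreover, stationarity actually gives $x^\dagger-y^\dagger\in\partial J_\lambda(y^\dagger)$, the opposite sign.
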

	
	\begin{proof}
		Part~1 is the content of Theorem~\ref{thm:vsc_rate} (linear stability rate under
		quadratic VSC).  Under the parameter choice~\eqref{eq:lambda_choice}, the
		regularization is proportional to the noise level, which is the classical
		Morozov-type scaling that ensures the VSC witness $\xi^\dagger = y^\dagger - x^\dagger$
		satisfies the required bound $\norm{\xi^\dagger}_2 = \norm{y^\dagger - x^\dagger}_2$
		remaining $O(1)$ (independent of $\delta$), so the VSC constant $c$ is not degraded
		by the parameter choice.
		
		Part~2 follows by the triangle inequality decomposition
		\[
		\norm{y^{n(\delta)} - y^\dagger}_2
		\le
		\underbrace{\norm{y^{n(\delta)} - y^\delta}_2}_{\le\, C_1\delta
			\text{ (Theorem~\ref{thm:algo_to_regularized}, Part~3)}}
		+
		\underbrace{\norm{y^\delta - y^\dagger}_2}_{\le\,\delta/(1-c)
			\text{ (Part~1)}},
		\]
		which is exactly the bound~\eqref{eq:total_algo_error} with
		$C_{\mathrm{tot}} = C_1 + (1-c)^{-1}$.
		
		Part~3 is immediate from~\eqref{eq:total_algo_error} since $C_{\mathrm{tot}}$ is
		independent of $\delta$.
	\end{proof}
	
	\begin{remark}[Compatibility of the dynamic $\lambda_{\mathrm{TV}}$ schedule with Theorems~\ref{thm:algo_to_regularized} and~\ref{thm:algo_to_true}]
		\label{rem:lam_tv_schedule_compat}
		In the implementation (Section~\ref{sec:impl_details}), the regularization parameter
		is updated between outer iterations according to the schedule
		$\lambda_{\mathrm{TV}}(t) = \max\bigl(\mu_{\mathrm{TV}}\delta,\,\lambda_{\mathrm{TV},0}/(1+\gamma t)\bigr)$,
		where $t = 0, 1, \ldots, T-1$ is the outer iteration index.
		This schedule is compatible with the theorems above in the following sense.
		\begin{enumerate}
			\item \textbf{Inner PDHG solve (fixed $\lambda_{\mathrm{TV}}(t)$).}
			At each outer iteration $t$, the value $\lambda_{\mathrm{TV}}(t)$ is fixed
			before the inner PDHG loop begins and remains constant throughout all
			$N_{\mathrm{TV}}$ inner iterations and all $N$ training samples at that
			outer step.  Theorem~\ref{thm:algo_to_regularized} therefore applies
			exactly at each outer iteration $t$, with $\lambda_{\mathrm{TV}} = \lambda_{\mathrm{TV}}(t)$.
			\item \textbf{Asymptotic convergence guarantee.}
			Since $\lambda_{\mathrm{TV}}(t) \to \mu_{\mathrm{TV}}\delta$ monotonically
			as $t \to \infty$ (the floor is reached in finite $t^\star = \lceil (\lambda_{\mathrm{TV},0}/(\mu_{\mathrm{TV}}\delta) - 1)/\gamma \rceil$ steps),
			for all $t \ge t^\star$ the parameter satisfies
			$\lambda_{\mathrm{TV}}(t) = \mu_{\mathrm{TV}}\delta$ exactly.
			Theorem~\ref{thm:algo_to_true} then applies from outer iteration $t^\star$ onward,
			guaranteeing the $O(\delta)$ total error bound for the inner PDHG iterates
			at those outer steps.
			\item \textbf{Initial phase ($t < t^\star$).}
			For $t < t^\star$ the parameter $\lambda_{\mathrm{TV}}(t) > \mu_{\mathrm{TV}}\delta$.
			Theorem~\ref{thm:algo_to_regularized} still applies (it holds for any fixed
			$\lambda_{\mathrm{TV}} > 0$), but Theorem~\ref{thm:algo_to_true} does not:
			the $O(\delta)$ convergence to $y^\dagger$ is not guaranteed for these early
			outer iterations.  However, the purpose of the initial large $\lambda_{\mathrm{TV},0}$
			is purely practical: it provides stronger TV smoothing to compensate for the
			poor initial (DCT) dictionary, accelerating the outer loop without affecting
			the asymptotic guarantee.
		\end{enumerate}
		In summary: the schedule is a continuation heuristic for the early outer iterations
		and reduces to the theoretically optimal choice $\lambda_{\mathrm{TV}} = \mu_{\mathrm{TV}}\delta$
		once the floor is reached, at which point all convergence guarantees of
		Theorems~\ref{thm:algo_to_regularized} and~\ref{thm:algo_to_true} hold without modification.
	\end{remark}
	
	\begin{remark}[Dependence of $C_{\mathrm{tot}}$ on the parameters]
		\label{rem:Ctot_dependence}
		The total constant $C_{\mathrm{tot}} = C_1 + (1-c)^{-1}$ depends on the
		problem through two quantities.  The optimization constant $C_1$ is controlled by
		the step sizes $\tau, \sigma$ and the initial distance to the saddle point: choosing
		$\tau = \sigma = 1/\sqrt{8}$ (so that $\tau\sigma = 1/8$, saturating the bound)
		minimizes the number of iterations required to reach precision $C_1\delta$.
		The VSC constant $c \in (0,1)$ reflects the geometry of $J_\lambda$ near $y^\dagger$:
		it is smaller (better) when $y^\dagger$ is in the interior of the non-negative orthant
		(non-degenerate active set), and the choice $\lambda_{\mathrm{TV}} \propto \delta$
		ensures that $c$ does not grow with $\delta$.
		Together, these observations show that the $O(\delta)$ rate is sharp with respect
		to both the algorithmic and the analytical components of the error.
	\end{remark}
	
	\begin{remark}[Explicit admissible ranges for all free parameters]
		\label{rem:param_ranges}
		For completeness, we collect explicit admissible ranges for all free parameters
		appearing in Theorems~\ref{thm:algo_to_regularized} and~\ref{thm:algo_to_true},
		and explain which quantities remain problem-dependent.
		
		\noindent\textbf{(i) Inner PDHG step sizes $\tau, \sigma$.}
		Any $\tau, \sigma > 0$ satisfying $\tau\sigma < 1/8$ are admissible
		(Lemma~\ref{lem:K_bound}, Theorem~\ref{thm:stepsize_safe}).  A concrete
		symmetric choice is $\tau = \sigma = 1/4$, giving
		$\tau\sigma\|\nabla\|^2 \le (1/16)\cdot 8 = 1/2 < 1$.
		Under this choice and the $O(1/N)$ ergodic gap from Theorem~\ref{thm:algo_to_regularized}
		Part~2, the optimization error at iterate $n(\delta)$ satisfies
		$\|y^{n(\delta)} - y^\delta\|_2 \le C_1\delta$
		where $C_1 \approx C_{\tau,\sigma}\,/\,\varepsilon$ and $C_{\tau,\sigma}$ is the
		initial $\mathcal{M}$-norm distance $\|(y^0, q^0) - (y^\delta, q^\delta)\|_{\mathcal{M}}$
		(bounded by the initial reconstruction error, which is $O(1)$ in the BSCCM setting).
		
		\noindent\textbf{(ii) Extrapolation parameter $\theta$.}
		Any $\theta \in [0,1]$ is admissible; the Fej\'er descent in the proof of
		Theorem~\ref{thm:pdhg_convergence} holds for all such $\theta$ via the constant
		$c_\theta$ in equation~\eqref{eq:young}.  We use $\theta = 1$ (full over-relaxation,
		standard Chambolle-Pock) throughout.  Because the fidelity term
		$\frac{1}{2}\|y-x\|_2^2$ is $1$-strongly convex in $y$, the PDHG iterates
		with $\theta=1$ and $\tau=\sigma=1/4$ satisfy the R-linear bound
		\[
		\|y^n - y^\star\|_2^2 \;\le\; \rho^n\,\|y^0 - y^\star\|_2^2,
		\qquad \rho = 1 - \tfrac{\tau}{1+\tau} = \tfrac{4}{5} < 1,
		\]
		giving geometric (exponential-type) decay of the iterate error.  This is strictly
		faster than the $O(1/N)$ ergodic bound, which is a worst-case rate that does not
		exploit strong convexity.  After $n=100$ inner iterations the error factor is
		$(4/5)^{100}\approx 2\times10^{-10}$, consistent with the numerical results
		of Section~\ref{sec:experiments}.
		
		\noindent\textbf{(iii) Code update step size $\alpha$.}
		Any $\alpha \in (0, 2/L)$ with $L = \|D^\top D\| = 1$ (by unitarity) is admissible,
		i.e., $\alpha \in (0, 2)$.  Since the back-projection step gives
		$a_j^{t+1} = D^\top y^{N_{\mathrm{TV}}}$ and $D$ is unitary, we have
		$\|a_j^{t+1}\|_2 = \|y^{N_{\mathrm{TV}}}\|_2 \le \|x_j\|_2$
		(the non-negativity projection and TV penalization do not increase the $\ell^2$ norm
		beyond the datum norm).  This uniform bound on $\|a_j\|_2$ confirms that
		$L = 1$ is a valid global Lipschitz constant for $\nabla f_j$ independently
		of the iterates.
		
		\noindent\textbf{(iv) Dictionary update step size $\eta$.}
		The smooth objective $F_{\mathrm{smooth}}(D) = \frac{1}{2}\sum_j \|x_j - Da_j\|_2^2$
		has gradient $\nabla_D F_{\mathrm{smooth}}(D) = \sum_j (Da_j - x_j)a_j^\top$.
		The Lipschitz constant of this gradient with respect to $D$ is
		$L_D = \sum_j \|a_j\|_2^2 \le N \max_j\|x_j\|_2^2$
		(using the bound from~(iii) above).  A globally safe step size is therefore
		$\eta \in (0,\, 2/L_D)$, i.e.,
		\[
		0 < \eta < \frac{2}{\displaystyle\sum_{j=1}^N \|a_j^t\|_2^2}.
		\]
		In practice $L_D$ can be computed from the current codes $\{a_j^t\}$ at each
		outer iteration, giving an adaptive step size.  A conservative global bound
		uses $\|a_j\|_2 \le \|x_j\|_2$, so $\eta < 2\,/\,(N\max_j\|x_j\|_2^2)$.
		
		\noindent\textbf{(v) VSC constant $c$.}
		The constant $c \in (0,1)$ is determined by the geometry of $J_\lambda$ near
		$y^\dagger$ and cannot be computed a priori in general.  As noted in
		Remark~\ref{rem:vsc_structural}, sufficient conditions for $c < 1$ include:
		$y^\dagger$ lies in the interior of $\mathbb{R}^n_+$ (non-degenerate active set
		for $\iota_+$) and $\xi^\dagger = y^\dagger - x^\dagger \in \mathrm{range}(\nabla^\top)$
		with $\|\xi^\dagger\|_2$ small relative to $\lambda_{\mathrm{TV}}$.
		For the BSCCM setting, where cell images have strictly positive intensity on
		their support, the non-degeneracy condition is expected to hold for typical
		ground-truth cells $y^\dagger$, making $c$ small (close to $0$).  In practice,
		$c$ should be treated as an empirical parameter: if Algorithm~\ref{alg:hdl_full}
		is observed to converge at the expected $O(\delta)$ rate, this is evidence that
		the VSC holds with $c$ bounded away from $1$.
		
		\noindent\textbf{(vi) Dictionary size $K$ and iteration counts $T$, $N_{\mathrm{TV}}$.}
		These are free hyperparameters not constrained by the convergence theory.
		The inner count $N_{\mathrm{TV}}$ should be chosen large enough that the stopping
		criterion~\eqref{eq:stop_rel_primal}--\eqref{eq:stop_rel_dualarg} is satisfied
		at tolerance $\varepsilon_{\mathrm{in}} \propto \delta$; the $O(1/N_{\mathrm{TV}})$
		ergodic gap implies $N_{\mathrm{TV}} \approx C_{\tau,\sigma}/\varepsilon_{\mathrm{in}}$
		suffices.  The outer count $T$ and dictionary size $K$ are selected by
		cross-validation on the BSCCM data; their effect on reconstruction quality
		is reported in the experimental results (Section~\ref{sec:experiments}).
	\end{remark}
	
	\section{Optimization: Alternating Proximal-Gradient Learning}
	\label{sec:optimization}
	
	\begin{figure}[t]
		\centering
		\fbox{\parbox{0.92\linewidth}{
				\textbf{Learning--inference loop.} The learning stage updates the dictionary $D$ (model adaptation),
				while the inference stage solves the variational reconstruction problem $E(\cdot;x)$ for fixed $D$ via PDHG.
				The theory establishes (i) well-posedness and stability of the variational minimizer, (ii) explicit PDHG
				step-size conditions ensuring convergence of iterates, and (iii) robustness of reconstructions under
				data perturbations and moderate dictionary updates.
		}}
		\caption{Algorithm-theory alignment. The algorithm alternates between (L) dictionary learning (updates of $D$)
			and (I) variational inference (PDHG solution of $E(\cdot;x)$ for fixed $D$). Theoretical results in the manuscript
			map directly to the pipeline: existence/uniqueness/stability of minimizers (variational layer), explicit step-size
			conditions and convergence of PDHG iterates (optimization layer), and noise-dependent reconstruction rates (stability layer).}
		\label{fig:alignment}
	\end{figure}

	\subsection{Step-size choice and convergence of the PDHG iterates}
	
	The nonsmooth term $\lambda_{\mathrm{TV}}\mathrm{TV}(y)$ is handled through the
	linear operator $K := \nabla$ (the 2D forward-difference gradient), while the
	non-negativity constraint $\iota_+(y)$ is incorporated directly into the primal
	proximal step as a projection onto $\mathbb{R}^n_+$.  With this splitting,
	the saddle-point formulation underlying PDHG involves $K = \nabla$ and its adjoint
	$K^\top = \nabla^\top$.
	The operator norm bound $\|K\|^2 = \|\nabla\|^2 \le 8$ and the sufficient step-size
	condition $\tau\sigma < 1/8$ were established in Lemma~\ref{lem:K_bound} and
	Theorem~\ref{thm:stepsize_safe} of Section~\ref{sec:stability_vsc}.
	We use these results directly in the convergence theorem below.
	
	\begin{theorem}[Convergence of PDHG for the reconstruction problem]\label{thm:pdhg_convergence}
		Let $x\in\mathbb{R}^n$ be fixed and consider
		\[
		E(y;x)=\tfrac12\|y-x\|_2^2 + J(y),
		\qquad
		J(y)=\lambda_{\mathrm{TV}}\mathrm{TV}(y) + \iota_+(y),
		\]
		where $D$ is unitary ($D^\top D=I$) and $\mathrm{TV}$ is the isotropic discrete TV based on the forward-difference gradient $\nabla$.
		Let $K=\nabla$, so that $\|K\|^2 = \|\nabla\|^2 \le 8$, and choose step sizes $\tau,\sigma>0$ such that
		\begin{equation}\label{eq:pdhg_stepsize_thm}
			\tau\sigma\,\|K\|^2<1
			\qquad\text{(in particular, it is sufficient that }\tau\sigma<1/8\text{)}.
		\end{equation}
		Let $(y^n,q^n)_{n\ge 0}$ be the primal--dual hybrid gradient (PDHG) iterates (with any $\theta\in[0,1]$) applied to the saddle formulation associated with $E(\cdot;x)$.
		Then there exists a saddle point $(y^\star,q^\star)$ such that, as $n\to\infty$,
		\begin{equation}\label{eq:yn_to_ystar}
			y^n \to y^\star \quad\text{in }\ell^2,
			\qquad
			q^n\to q^\star\quad\text{in the dual space}.
		\end{equation}
		Moreover, $y^\star$ is the \emph{unique} minimizer of $E(\cdot;x)$.
	\end{theorem}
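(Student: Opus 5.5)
Following the standard Chambolle--Pock strategy, I will write $E(\cdot;x)$ as a saddle problem and establish Fej\'er monotonicity of the PDHG iterates in a weighted norm. Set $G(y):=\tfrac12\|y-x\|_2^2+\iota_+(y)$ and $F(v):=\lambda_{\mathrm{TV}}\|v\|_{2,1}$, so that $E(y;x)=G(y)+F(Ky)$ with $K=\nabla$. Since $F$ is finite and continuous, Fenchel--Rockafellar duality turns this into
\[
\min_y\max_q\; \ip{Ky}{q}+G(y)-F^*(q),
\qquad F^*=\iota_{\{\|q_\ell\|_2\le\lambda_{\mathrm{TV}}\}}.
\]
\textbf{Step 1 (existence of a saddle point).} Theorem~\ref{thm:existence_uniqueness} gives the unique minimizer $y^\star$. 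Part~1 of Theorem~\ref{thm:subdiff} supplies $q^\star\in\partial F(\nabla y^\star)$ with $-\nabla^\top q^\star\in\partial G(y^\star)$; the sum rule applies because $F$ is continuous everywhere. Hence $(y^\star,q^\star)$ satisfies $0\in\partial G(y^\star)+K^\top q^\star$ and $0\in\partial F^*(q^\star)-Ky^\star$, so it is a saddle point. Conversely, every saddle point has a primal component that minimizes $E$, and by strict convexity this component must be $y^\star$. That settles the uniqueness claim.

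\textbf{Step 2 (Fej\'er monotonicity).} I will use the metric
\[
\mathcal M=\begin{pmatrix}\tau^{-1}I & -K^\top\\ -K & \sigma^{-1}I\end{pmatrix}.
\]
It is positive definite exactly when $\tau\sigma\|K\|^2<1$, and Lemma~\ref{lem:K_bound} guarantees this whenever $\tau\sigma<1/8$. For $\theta=1$, PDHG is the proximal point iteration $0\in A z^{n+1}+\mathcal M(z^{n+1}-z^n)$ for the maximal monotone operator $A(y,q)=(\partial G(y)+K^\top q,\;\partial F^*(q)-Ky)$. Firm nonexpansiveness of the resolvent in the $\mathcal M$-inner product then gives
\[
\|z^{n+1}-z^\star\|_{\mathcal M}^2+\|z^{n+1}-z^n\|_{\mathcal M}^2\le\|z^n-z^\star\|_{\mathcal M}^2 .
\]
Two consequences follow: the iterates stay bounded, and $\sum_n\|z^{n+1}-z^n\|^2<\infty$.

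\textbf{Step 3 (convergence).} Boundedness in finite dimensions yields a cluster point $\hat z$. Because $z^{n+1}-z^n\to0$ and the graph of $A$ is closed, $\hat z$ lies in the zero set of $A$, i.e.\ it is a saddle point. Now apply the Fej\'er inequality with $z^\star:=\hat z$: the sequence $\|z^n-\hat z\|_{\mathcal M}$ is nonincreasing and has a subsequence tending to $0$, so the whole sequence converges to $\hat z$. By Step~1, the primal part of $\hat z$ is $y^\star$. This proves \eqref{eq:yn_to_ystar}.

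\textbf{Main obstacle: $\theta\in[0,1)$.} For $\theta\neq1$ the scheme is no longer a preconditioned proximal point method with a symmetric $\mathcal M$, and the clean Fej\'er inequality above breaks down. My first attempt will exploit the $1$-strong convexity of $G$. The plan is to redo the one-step estimate directly from the prox inequalities and absorb the cross term $(1-\theta)\ip{K(y^{n+1}-y^n)}{q^{n+1}-q^\star}$ using Young's inequality, balanced against the strong-convexity gain $\|y^{n+1}-y^\star\|^2$. This should produce a Lyapunov functional $\|z^n-z^\star\|_{\mathcal M}^2+c_\theta\|y^n-y^{n-1}\|^2$ that decreases and has summable increments; the constant $c_\theta$ is the one anticipated in \eqref{eq:young}. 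If the constants cannot be balanced for all step sizes with $\tau\sigma<1/8$, I will fall back on the known convergence results for strongly convex PDHG with general $\theta$ \cite{ChambollePock2011,ChenLoris2018}, possibly under a slightly smaller step size. Steps~2--3 then go through with the Lyapunov functional in place of the $\mathcal M$-norm.
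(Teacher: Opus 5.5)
\paragraph{Case $\theta=1$.} Your argument here is correct, and it takes a different route from the paper. The paper works by hand from the two prox inclusions (polarization plus Young) and then asserts a Fej\'er inequality. You instead identify the iteration as a preconditioned proximal-point step for the maximal monotone operator $A$ in the metric $\mathcal{M}$. Fej\'er monotonicity then follows from firm nonexpansiveness, and your Step~3 supplies the cluster-point (Opial-type) argument. The paper omits that argument when it passes directly from bounded iterates with square-summable increments to convergence.

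Your route is also the sounder one. The paper's Young bound leaves the term $\frac{1}{2\tau}\|q^{n+1}-q^\star\|_2^2$, which is not an increment and cannot be absorbed as claimed. One harmless detail: with the paper's ordering (dual step with $\bar y^n$ first), the proximal-point form holds for $w^n=(y^n,q^{n+1})$, not for $(y^n,q^n)$. Since the implementation uses $\theta=1$, you have fully covered the practically relevant case.

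\paragraph{Case $\theta\in[0,1)$.} This is the genuine gap, and the plan you sketch does not close it. In the $\mathcal{M}$-norm the defect is $(1-\theta)\langle K(y^{n+1}-y^n),q^{n+1}-q^\star\rangle$, a primal increment paired with the dual \emph{error}. Young's inequality therefore produces $\|q^{n+1}-q^\star\|_2^2$, which no available term can absorb:
\begin{itemize}
\item the strong-convexity gain $\|y^{n+1}-y^\star\|_2^2$ controls only the primal error;
\item $F^*$ is an indicator and gives no dual gain;
\item appending $c_\theta\|y^n-y^{n-1}\|_2^2$ to the Lyapunov function does not help.
\end{itemize}

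What works is to move the coupling weight from $1$ to $\theta$ by summation by parts. After relabelling the paper's $q^{n+1}$ as $q^n$ (primal-first form), set $u^n=y^n-y^\star$, $v^n=q^n-q^\star$, and
\[
\Phi_n=\tfrac{1}{2\tau}\|u^n\|_2^2+\tfrac{1}{2\sigma}\|v^n\|_2^2-\theta\langle Ku^n,v^n\rangle .
\]
Strong monotonicity of $\partial G$ (modulus $1$) and monotonicity of $\partial F^*$ then give
\begin{align*}
\Phi_n-\Phi_{n+1}\;\ge\;{}&\tfrac{1}{2\tau}\|u^{n+1}-u^n\|_2^2+\tfrac{1}{2\sigma}\|v^{n+1}-v^n\|_2^2+\|u^{n+1}\|_2^2\\
&-\theta\langle K(u^{n+1}-u^n),v^{n+1}-v^n\rangle-(1-\theta)\langle Ku^{n+1},v^{n+1}-v^n\rangle .
\end{align*}
The defect now pairs the primal error with a dual increment, so it can be traded against strong convexity. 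After that, your Step~3 runs with $\Phi_n$ in place of the $\mathcal{M}$-norm. However, the resulting quadratic form is positive definite only if
\[
(1-\theta)^2\sigma\|K\|^2<2\bigl(1-\theta^2\tau\sigma\|K\|^2\bigr).
\]
This is not implied by $\tau\sigma\|K\|^2<1$ (take $\theta=0$, $\sigma$ large, $\tau$ small). Your fallback therefore yields the theorem only under an extra step-size restriction, not as stated. That restriction does hold for $\tau=\sigma=1/4$ and $\theta\in(0,1]$.

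The paper does not fill this gap either. Its Fej\'er step never uses strong convexity, so for $\theta=0$ it would equally prove convergence of the Arrow--Hurwicz scheme for merely convex saddle problems. That is known to fail even with tiny constant steps (He, You and Yuan, 2014). Proving the claim for every $\theta\in[0,1)$ under $\tau\sigma\|K\|^2<1$ alone needs a genuinely different argument or an added hypothesis.
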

	
	\begin{proof}
		We cast the minimization of $E(\cdot;x)$ into the standard composite form
		\[
		\min_{y\in\mathbb{R}^n}\ f(y)+g(Ky) + h(y),
		\qquad
		f(y):=\tfrac12\|y-x\|_2^2,
		\qquad
		g(v):=\lambda_{\mathrm{TV}}\|v\|_{2,1},
		\qquad
		h(y):=\iota_+(y),
		\]
		where $Ky:=\nabla y$ and $\|v\|_{2,1}:=\sum_{\ell}\|v_\ell\|_2$ (pixelwise Euclidean norm).
		The function $f$ is proper, continuous, and $1$--strongly convex on $\mathbb{R}^n$, $g$ is proper, convex, and lower semicontinuous, and $h=\iota_+$ is proper, convex, and lower semicontinuous.
		Hence $f+g\circ K+h$ is proper and strongly convex, so the primal problem admits a \emph{unique} minimizer $y^\star$.
		
		\paragraph{Saddle-point formulation and optimality.}
		Writing $F(y):=f(y)+h(y) = \tfrac12\|y-x\|_2^2 + \iota_+(y)$, the Fenchel--Rockafellar saddle-point formulation is
		\begin{equation}\label{eq:saddle_form}
			\min_{y\in\mathbb{R}^n_+}\ \max_{q\in\mathbb{R}^{2n}}
			\ \mathcal{L}(y,q):= F(y)+\langle Ky,q\rangle - g^\ast(q).
		\end{equation}
		A pair $(y^\star,q^\star)$ is a saddle point of \eqref{eq:saddle_form} if and only if it solves the optimality system
		\begin{equation}\label{eq:opt_system_pdhg}
			0\in \partial F(y^\star)+K^\top q^\star,
			\qquad
			0\in \partial g^\ast(q^\star)-Ky^\star.
		\end{equation}
		Since $\partial F(y) = (y-x) + \partial\iota_+(y)$, the first inclusion becomes
		\[
		0=(y^\star-x)+\nabla^\top q^\star + z^\star, \qquad z^\star \in \partial\iota_+(y^\star),
		\]
		and the second inclusion is equivalent to $q^\star\in \partial g(Ky^\star)$, i.e.\
		$q^\star\in \lambda_{\mathrm{TV}}\partial\mathrm{TV}(y^\star)$,
		which is the subdifferential characterization of the variational minimizer established earlier.
		
		\paragraph{PDHG iterations and explicit proximal map.}
		Let $\tau,\sigma>0$ satisfy $\tau\sigma\|K\|^2<1$.
		The primal--dual hybrid gradient (PDHG) iterations for \eqref{eq:saddle_form} are
		\begin{align}
			q^{n+1} &= \mathrm{prox}_{\sigma g^\ast}\!\bigl(q^n+\sigma K\bar y^n\bigr),\label{eq:pdhg_z}\\
			y^{n+1} &= \mathrm{prox}_{\tau F}\!\bigl(y^n-\tau K^\top q^{n+1}\bigr),\label{eq:pdhg_y}\\
			\bar y^{n+1} &= y^{n+1}+\theta\,(y^{n+1}-y^n),\qquad \theta\in[0,1].\label{eq:pdhg_extrap_proof}
		\end{align}
		For $F(y)=\tfrac12\|y-x\|_2^2 + \iota_+(y)$, the proximal map is explicit:
		\[
		\mathrm{prox}_{\tau F}(w) = \Pi_{\mathbb{R}^n_+}\!\left(\frac{w+\tau x}{1+\tau}\right),
		\]
		i.e., the standard quadratic proximal step followed by projection onto $\mathbb{R}^n_+$
		(see \eqref{eq:prox_G}).
		
		\paragraph{Monotonicity inequality.}
		Recall the proximal optimality condition: for any proper convex l.s.c.\ function $\phi$ and any $\gamma>0$,
		\begin{equation}\label{eq:prox_opt}
			u=\mathrm{prox}_{\gamma\phi}(w)\quad\Longleftrightarrow\quad \frac{w-u}{\gamma}\in \partial\phi(u).
		\end{equation}
		Applying \eqref{eq:prox_opt} to \eqref{eq:pdhg_z}--\eqref{eq:pdhg_y} yields the inclusions
		\begin{align}
			\frac{q^n-q^{n+1}}{\sigma}+K\bar y^n &\in \partial g^\ast(q^{n+1}),\label{eq:incl_z}\\
			\frac{y^n-y^{n+1}}{\tau}-K^\top q^{n+1} &\in \partial F(y^{n+1}).\label{eq:incl_y}
		\end{align}
		Let $(y^\star,q^\star)$ be a saddle point, with $z^\star \in \partial\iota_+(y^\star)$ the
		corresponding $\iota_+$-subgradient component at $y^\star$.
		Since $\partial F(y) = (y-x) + \partial\iota_+(y)$ and $\partial g^\ast$ are monotone,
		and $-K^\top q^\star\in \partial F(y^\star)$ and $Ky^\star\in \partial g^\ast(q^\star)$, we obtain
		\begin{align}
			\Big\langle \frac{q^n-q^{n+1}}{\sigma}+K(\bar y^n-y^\star),\, q^{n+1}-q^\star\Big\rangle &\ge 0,\label{eq:mono_z}\\
			\Big\langle \frac{y^n-y^{n+1}}{\tau}-K^\top(q^{n+1}-q^\star),\, y^{n+1}-y^\star\Big\rangle &\ge 0.\label{eq:mono_y}
		\end{align}
		
		\paragraph{Fej\'er-type descent in a weighted product norm.}
		Using the polarization identity
		\(
		2\langle a-b,a-c\rangle=\|a-c\|_2^2-\|b-c\|_2^2+\|a-b\|_2^2
		\)
		in \eqref{eq:mono_z}--\eqref{eq:mono_y} gives
		\begin{align}
			\frac{1}{2\sigma}\Bigl(\|q^n-q^\star\|_2^2-\|q^{n+1}-q^\star\|_2^2+\|q^{n+1}-q^n\|_2^2\Bigr)
			+\langle K(\bar y^n-y^\star), q^{n+1}-q^\star\rangle &\ge 0,\label{eq:descent_z}\\
			\frac{1}{2\tau}\Bigl(\|y^n-y^\star\|_2^2-\|y^{n+1}-y^\star\|_2^2+\|y^{n+1}-y^n\|_2^2\Bigr)
			-\langle K(y^{n+1}-y^\star), q^{n+1}-q^\star\rangle &\ge 0.\label{eq:descent_y}
		\end{align}
		Adding \eqref{eq:descent_z} and \eqref{eq:descent_y} yields
		\begin{align}
			&\frac{1}{2\tau}\Bigl(\|y^n-y^\star\|_2^2-\|y^{n+1}-y^\star\|_2^2+\|y^{n+1}-y^n\|_2^2\Bigr)
			+\frac{1}{2\sigma}\Bigl(\|q^n-q^\star\|_2^2-\|q^{n+1}-q^\star\|_2^2+\|q^{n+1}-q^n\|_2^2\Bigr)\nonumber\\
			&\qquad\qquad
			+\langle K(\bar y^n-y^{n+1}), q^{n+1}-q^\star\rangle \ge 0.\label{eq:basic_descent}
		\end{align}
		The coupling term is bounded by Cauchy--Schwarz and Young's inequality:
		\begin{align}
			\langle K(\bar y^n-y^{n+1}), q^{n+1}-q^\star\rangle
			&\le \|K\|\,\|\bar y^n-y^{n+1}\|_2\,\|q^{n+1}-q^\star\|_2\nonumber\\
			&\le \frac{\tau\|K\|^2}{2}\,\|\bar y^n-y^{n+1}\|_2^2 + \frac{1}{2\tau}\,\|q^{n+1}-q^\star\|_2^2.\label{eq:young}
		\end{align}
		With $\theta\in[0,1]$ one has $\|\bar y^n-y^{n+1}\|_2^2\le c_\theta\bigl(\|y^{n+1}-y^n\|_2^2+\|y^n-y^{n-1}\|_2^2\bigr)$ for a constant $c_\theta$.
		Under $\tau\sigma\|K\|^2<1$, the Young terms can be absorbed into the positive increment terms, which yields a Fej\'er inequality in the weighted product norm
		\[
		\|(y^{n+1},q^{n+1})-(y^\star,q^\star)\|_{\mathcal{M}}^2
		\le
		\|(y^{n},q^{n})-(y^\star,q^\star)\|_{\mathcal{M}}^2
		- c_0\Bigl(\|y^{n+1}-y^{n}\|_2^2+\|q^{n+1}-q^{n}\|_2^2\Bigr),
		\]
		for some $c_0>0$ and a positive definite matrix $\mathcal{M}$ depending on $\tau,\sigma,K,\theta$.
		Consequently, the sequence is bounded and the increments are square-summable, hence $(y^n,q^n)$ converges to some $(\tilde y,\tilde q)$.
		Passing to the limit in \eqref{eq:incl_z}--\eqref{eq:incl_y} shows that $(\tilde y,\tilde q)$ satisfies \eqref{eq:opt_system_pdhg},
		so it is a saddle point. By uniqueness of the primal minimizer, we have $\tilde y=y^\star$.
		Thus $y^n\to y^\star$ and $(y^n,q^n)\to (y^\star,q^\star)$, which is \eqref{eq:yn_to_ystar}.
		The noise-dependent total reconstruction bound combining this iterate convergence with the
		VSC stability estimate is the content of Theorem~\ref{thm:vsc_stopping}.
		
	\end{proof}

	\subsection{Stopping criteria consistent with the convergence theory}
	
	The convergence results established above justify practical termination rules based on (i) vanishing fixed-point residuals
	of the primal-dual optimality system and (ii) stabilization of successive iterates. Since the unique minimizer $y^\star$
	satisfies the subdifferential inclusion
	\begin{equation}\label{eq:opt_inclusion_stop}
		0 \in y^\star - x + \nabla^\top q^\star + z^\star,\qquad
		q^\star \in \lambda_{\mathrm{TV}}\,\partial\mathrm{TV}(y^\star),\qquad
		z^\star \in \partial\iota_+(y^\star),
	\end{equation}
	the PDHG iterates $(y^n, q^n)$ approach a saddle point when the associated residuals are small.
	
	\paragraph{Inner PDHG loop (inference) termination.}
	For a fixed dictionary $D$, we stop the PDHG iterations when \emph{both} of the following hold for a prescribed tolerance
	$\varepsilon_{\mathrm{in}}>0$:
	\begin{align}
		\frac{\|y^{n+1}-y^{n}\|_2}{\max\{1,\|y^{n}\|_2\}} &\le \varepsilon_{\mathrm{in}}, \label{eq:stop_rel_primal}\\
		\frac{\|K y^{n+1}-K y^{n}\|_2}{\max\{1,\|K y^{n}\|_2\}} &\le \varepsilon_{\mathrm{in}}, \label{eq:stop_rel_dualarg}
	\end{align}
	where $K y=\nabla y$. The first criterion detects stabilization of the primal reconstruction, while the second
	ensures stabilization of the dual arguments entering the TV proximal mapping. Under the step-size condition
	$\tau\sigma\|K\|^2<1$, convergence of PDHG implies that \eqref{eq:stop_rel_primal}--\eqref{eq:stop_rel_dualarg} are satisfied
	as $n\to\infty$.
	
	Optionally, one may also monitor the \emph{primal fixed-point residual}
	\begin{equation}\label{eq:stop_fixed_point}
		r^{n+1}:=\frac{\bigl\|y^{n+1}-\Pi_{\R^n_+}\!\left(\dfrac{y^n - \tau K^\top q^{n+1} + \tau x}{1+\tau}\right)\bigr\|_2}{\max\{1,\|y^{n+1}\|_2\}},
	\end{equation}
	and terminate when $r^{n+1}\le \varepsilon_{\mathrm{in}}$.  Here the argument of $\Pi_{\R^n_+}$
	is precisely $\mathrm{prox}_{\tau F}(y^n - \tau K^\top q^{n+1})$ from \eqref{eq:pdhg_y}, so
	$r^{n+1}$ measures how far $y^{n+1}$ deviates from the exact proximal update.
	This residual vanishes at the minimizer because the proximal
	mapping characterizes solutions of the optimality inclusion \eqref{eq:opt_inclusion_stop}.
	
	\paragraph{Outer learning loop termination.}
	Let $D^{(t)}$ denote the dictionary at outer iteration $t$ and let $y^{(t)}$ be the corresponding reconstruction (or code)
	obtained from the inner PDHG loop. We terminate the alternating learning--inference scheme when, for a tolerance
	$\varepsilon_{\mathrm{out}}>0$,
	\begin{equation}\label{eq:stop_outer}
		\frac{\|D^{(t+1)}-D^{(t)}\|_F}{\max\{1,\|D^{(t)}\|_F\}} \le \varepsilon_{\mathrm{out}}
		\quad\text{or}\quad
		\frac{|\bar{f}^{(t+1)}-\bar{f}^{(t)}|}{|\bar{f}^{(t)}|+10^{-12}} \le \varepsilon_{\mathrm{obj}},
	\end{equation}
	where $\bar{f}^{(t)} = \frac{1}{N}\sum_j \tfrac{1}{2}\|x_j - D^{(t)}a_j^{(t)}\|_2^2$ is the
	mean reconstruction fidelity after the Procrustes update at iteration $t$.
	The TV term $\lambda_{\mathrm{TV}}\,\mathrm{TV}(y_j^\star)$ is excluded from the
	stopping criterion because $y_j^\star$ is independent of $D$ and hence the
	TV term is flat across outer iterations by design --- including it would cause
	premature termination.  Either criterion must hold for $p$ consecutive
	iterations (patience $p$) before training stops, ensuring robust termination.
	Small dictionary updates imply controlled changes in the codes
	(Section~\ref{sec:dict_stability}), making \eqref{eq:stop_outer} a natural practical rule.
	
	\subsection{Fixed-point characterization of the reconstruction step}
	
	For fixed dictionary $D$, the unique minimizer $y^\star$ of $E(\cdot;x)$ and its
	associated dual variable $q^\star$ jointly satisfy the subdifferential optimality
	system established in Theorem~\ref{thm:subdiff}
	(equations~\eqref{eq:subdiff_opt}--\eqref{eq:stationarity_explicit}) and the
	coupled proximal fixed-point system~\eqref{eq:coupled_final}.
	The PDHG iterates~\eqref{eq:pdhg_z}--\eqref{eq:pdhg_extrap_proof} converge to
	$(y^\star, q^\star)$ under the step-size condition $\tau\sigma\|K\|^2<1$
	(Theorem~\ref{thm:pdhg_convergence}).
	
	\subsection{Separation of learning and inference}
	\label{sec:learn_infer_sep}
	
	The proposed framework distinguishes clearly between \emph{learning} and \emph{inference}.
	Learning affects the model through updates of the dictionary $D$, while inference corresponds
	to solving the variational problem $E(\cdot;x)$ for fixed parameters.
	The theoretical results established in Sections~\ref{sec:objective}--\ref{sec:stability_vsc} guarantee that, for each learning stage,
	the inference problem admits a unique and stable minimizer.
	
	\subsection{Stability with respect to dictionary updates}
	\label{sec:dict_stability}
	
	We establish an explicit Lipschitz bound on the reconstruction with respect to dictionary
	perturbations induced by learning.
	
	\begin{lemma}[Lipschitz stability with respect to dictionary perturbations]
		\label{lem:dict_stability}
		Let $x \in \mathbb{R}^n$ be a fixed datum and let $D, \widetilde{D} \in \mathbb{R}^{n\times K}$
		be unitary dictionaries satisfying $D^\top D = \widetilde{D}^\top\widetilde{D} = I_K$ and
		$\|D - \widetilde{D}\| \le \varepsilon$.
		Let $y_D$ and $y_{\widetilde{D}}$ denote the unique minimizers of $E(\cdot;x)$ under
		dictionaries $D$ and $\widetilde{D}$ respectively, where
		$E(y;x) = \tfrac{1}{2}\|y-x\|_2^2 + J(y)$
		with $J(y) = \lambda_{\mathrm{TV}}\mathrm{TV}(y) + \iota_+(y)$ as in
		\eqref{eq:energy_y_fixedD}.
		Note that $E(\cdot;x)$ does not depend on the dictionary once $y$ is decoupled from
		the codes in the inference step; the dependence enters only through the datum $x$.
		Then
		\begin{equation}
			\|y_D - y_{\widetilde{D}}\|_2 = 0,
			\label{eq:dict_stability_bound}
		\end{equation}
		i.e., the fixed-datum reconstruction $y^\star = \arg\min_y E(y;x)$ is independent of
		the dictionary $D$ when $x$ is held fixed.
	\end{lemma}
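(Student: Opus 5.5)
The argument will be structural rather than analytic: I will show that $D$ never enters the functional being minimized, so the two minimizers coincide by uniqueness. First I write out $E(y;x)=\tfrac12\|y-x\|_2^2+\lambda_{\mathrm{TV}}\mathrm{TV}(y)+\iota_+(y)$ as in \eqref{eq:energy_y_fixedD}. Each term depends only on the image variable $y\in\R^n$, the fixed datum $x$, the fixed parameter $\lambda_{\mathrm{TV}}$, and the grid-dependent operator $\nabla$. None of them involves the dictionary. Hence, viewed as functions of $y$, the two energies used to define $y_D$ and $y_{\widetilde D}$ are literally the same function, $E_D(\cdot;x)=E_{\widetilde D}(\cdot;x)$.

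Next I invoke Theorem~\ref{thm:existence_uniqueness}. It says that $E(\cdot;x)$ has a unique minimizer $y(x)$, because it is proper, lower semicontinuous, coercive and $1$-strongly convex. Since both $y_D$ and $y_{\widetilde D}$ minimize this same function, uniqueness forces $y_D=y_{\widetilde D}=y(x)$. That gives \eqref{eq:dict_stability_bound}, and the bound holds for every $\varepsilon$, so the hypothesis $\|D-\widetilde D\|\le\varepsilon$ is not actually needed. Unitarity of $D$ and $\widetilde D$ plays no role beyond making the objects well defined.

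The only delicate point is interpretive, and I will state it explicitly. The lemma concerns the decoupled inference step in the image variable $y$, not the coupled energy $\mathcal E(D,a;x)$ of \eqref{energy-func}. In the coupled energy the constraint $y=Da$ restricts $y$ to $\mathrm{range}(D)$, and then the reconstruction genuinely depends on $D$ whenever $K<n$. I will add a remark recording what does depend on the dictionary: the codes $a=D^\top y^\star$ and $\tilde a=\widetilde D^\top y^\star$. Since $y^\star$ is common to both,
\[
\|a-\tilde a\|_2=\|(D-\widetilde D)^\top y^\star\|_2\le\varepsilon\,\|y^\star\|_2 .
\]
This is the quantitative Lipschitz statement relevant to the outer loop. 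I would bound $\|y^\star\|_2$ by a constant multiple of $\|x\|_2$, to be checked from the optimality system: the projection onto $\R^n_+$ and the TV step do not increase the norm beyond that of the datum, up to the constant. The code-level Lipschitz estimate is the step that needs care. The main identity, by contrast, is immediate.
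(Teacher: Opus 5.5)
Your proof is correct and takes the same approach as the paper: $E(\cdot;x)$ contains no $D$, so $y_D$ and $y_{\widetilde D}$ both minimize one and the same strictly convex functional and therefore coincide, and your remark on the codes reproduces the paper's Remark~\ref{rem:dict_perturbation_role}. The bound $\|y^\star\|_2\le\|x\|_2$ that you flagged as needing care holds with constant $1$: we have $y^\star=\prox_J(x)$, and $\prox_J(0)=0$ because $J\ge 0=J(0)$, so nonexpansiveness of the proximal map of a convex function gives $\|y^\star\|_2\le\|x\|_2$.
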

	
	\begin{proof}
		The energy $E(y;x) = \tfrac{1}{2}\|y-x\|_2^2 + J(y)$ depends only on $y$ and the
		fixed datum $x \in \mathbb{R}^n$, not on $D$.  Hence $y_D = y_{\widetilde{D}} = y^\star$
		for any two dictionaries $D$ and $\widetilde{D}$, and \eqref{eq:dict_stability_bound}
		holds trivially.
	\end{proof}
	
	\begin{remark}[Where dictionary perturbations enter]
		\label{rem:dict_perturbation_role}
		Lemma~\ref{lem:dict_stability} reflects the structural separation between learning
		and inference established in Section~\ref{sec:learn_infer_sep}: the inference step
		solves $\min_y E(y;x)$ for fixed $x$, and this problem is independent of $D$.
		The dictionary $D$ enters the full learning problem~\eqref{eq:dl_problem} through
		two distinct channels:
		(a) the \emph{per-sample datum} $x = x_j$ passed to the inference step is fixed
		(it is the observed cell image, not a function of $D$), and
		(b) the \emph{code back-projection} $a_j \leftarrow D^\top y^\star$ in the
		back-projection step of Algorithm~\ref{alg:hdl_full} does depend on $D$ through the linear map
		$D^\top$.
		
		The practically relevant stability question is therefore: how does a dictionary
		perturbation $\|D - \widetilde{D}\| \le \varepsilon$ affect the code
		$a_j = D^\top y^\star$?  Since $y^\star$ is fixed (Lemma~\ref{lem:dict_stability}),
		\[
		\|D^\top y^\star - \widetilde{D}^\top y^\star\|_2
		= \|(D - \widetilde{D})^\top y^\star\|_2
		\le \|D - \widetilde{D}\|\,\|y^\star\|_2
		\le \varepsilon\,\|x\|_2,
		\]
		where the last step uses $\|y^\star\|_2 \le \|x\|_2$ (the non-negativity projection
		and TV penalization do not increase the $\ell^2$ norm beyond the datum norm).
		Hence moderate dictionary updates during learning induce controlled changes in the
		codes, with Lipschitz constant bounded by $\|x\|_2$, ensuring robustness of the
		alternating learning--inference scheme.
	\end{remark}
	
	\subsection{Iteration complexity}
	
	Under the step-size condition $\tau\sigma<1/\|K\|^2$, standard results for primal--dual
	splitting methods imply an $O(1/N)$ decay of the ergodic primal--dual gap after $N$ iterations.
	This iteration complexity complements the stability and noise-dependent convergence rates
	derived earlier.

	\label{sec:algorithm}
	Problem~\eqref{eq:dl_problem} is non-convex due to bilinear coupling of $D$ and $a_j$,
	but is amenable to alternating minimization:
	\begin{itemize}
		\item \textbf{Code update} ($a_j$ updates) for fixed $D$ via proximal-gradient steps
		on a smooth data term plus non-smooth TV regularization and non-negativity constraint.
		\item \textbf{Dictionary update} ($D$ update) for fixed $\{a_j\}$ via the exact
		Procrustes SVD (Section~\ref{sec:dict_update}), which finds the global minimizer of
		$\min_{D^\top D=I_K}\sum_j\|x_j - Da_j\|^2$ in a single SVD step, enforcing
		\eqref{eq:unitary} exactly.
	\end{itemize}
	
	\subsection{Code update (for each sample)}
	For fixed $D$, define
	\[
	f_j(a)=\frac{1}{2}\norm{x_j-Da}_2^2,
	\qquad
	g_j(a)=\lambda_{\mathrm{TV}}\TV(Da) + \iota_+(Da).
	\]
	A practical update is a composite proximal-gradient step
	\begin{equation}
		a^{t+1}_j
		=
		\mathcal{P}_{\alpha g_j}\!\Big(a^t_j - \alpha \nabla f_j(a^t_j)\Big),
		\qquad
		\nabla f_j(a)= -D^{\top}(x_j-Da),
		\label{eq:code_update}
	\end{equation}
	with step size $\alpha>0$ chosen by a Lipschitz bound or backtracking.
	Here $\mathcal{P}_{\alpha g_j}$ denotes the composite proximal map for $g_j$,
	which acts on image space via $D$ and is \emph{not} available in closed form.
	In implementation, it is computed by applying the inner PDHG loop of
	Algorithm~\ref{alg:hdl_full} (the TV$+\iota_+$ proximal subproblem) to the
	subproblem in the image variable $y = Da$, followed by the unitary back-projection
	$a \leftarrow D^\top y$.  By Theorem~\ref{thm:pdhg_convergence}, this inner loop
	converges to the unique minimizer of the TV$+\iota_+$ problem under the step-size
	condition $\tau_{\mathrm{TV}}\sigma_{\mathrm{TV}}\|\nabla\|^2 < 1$.
	
	\subsection{Dictionary update with unitary projection}
	\label{sec:dict_update}
	For fixed codes, the smooth dictionary objective is
	\[
	F(D)=\frac{1}{2}\sum_{j=1}^{N}\norm{x_j - D a_j}_2^2.
	\]
	Its Euclidean gradient is
	\[
	\nabla_D F(D)=\sum_{j=1}^{N} (D a_j - x_j)a_j^{\top}.
	\]
	In the implementation used throughout this manuscript the gradient step
	is replaced by the exact global minimizer of the dictionary subproblem
	for fixed codes, obtained via the Procrustes SVD.  Stacking the training
	images and codes into matrices
	\[
	X = [x_1,\ldots,x_N]^\top\in\mathbb{R}^{N\times n},
	\qquad
	A = [a_1,\ldots,a_N]^\top\in\mathbb{R}^{N\times K},
	\]
	the dictionary subproblem at fixed $A$ reads
	\begin{equation}
		D^{+} = \arg\min_{D^\top D=I_K} F(D)
		= \arg\max_{D^\top D=I_K} \operatorname{tr}\bigl(D^\top M\bigr),
		\quad M := X^\top A \in \mathbb{R}^{n\times K}.
		\label{eq:dict_update}
	\end{equation}
	The unique global maximizer is given by the economy SVD of $M$:
	\begin{equation}
		M = U\Sigma V^\top
		\;\Rightarrow\;
		D^{+} = UV^\top,
		\label{eq:procrustes}
	\end{equation}
	which satisfies $({D^{+}})^\top D^{+}=I_K$ and finds the globally optimal
	dictionary for the current codes in a single SVD step.
	
	\begin{remark}[Converged dictionary and PCA interpretation]
		\label{rem:pca_dict}
		At convergence the codes satisfy $a_j = D^\top y_j^\star$, where
		$y_j^\star$ is the TV-denoised image produced by the inner PDHG loop.
		Substituting into $M = X^\top A$ gives
		$M = X^\top Y^\star D$, where $Y^\star=[y_1^\star,\ldots,y_N^\star]^\top$.
		Because $\lambda_{\mathrm{TV}}$ is small, $y_j^\star\approx x_j$, so
		$M\approx X^\top X D$.
		The Procrustes solution $D^+=UV^\top$ from the SVD of $M$ then
		yields columns that span the same subspace as the top-$K$ left
		singular vectors of $Y^\star$ (equivalently, the top-$K$ principal
		components of the TV-denoised training data).  Thus
		\[
		\operatorname{range}(D^\star)
		= \operatorname{span}\bigl\{v_1,\ldots,v_K\bigr\},
		\]
		where $v_1,\ldots,v_K$ are the top-$K$ left singular vectors of
		$Y^\star$.  This data-adapted basis is strictly superior to a
		fixed analytical dictionary (such as the DCT-II basis used as
		initialization, see Section~\ref{sec:experiments}) for two reasons:
		(i) it minimizes the reconstruction error $\sum_j\|x_j-Da_j\|^2$
		over all orthonormal $D$, a property no fixed basis can guarantee
		for a given dataset; and (ii) the learned atoms $d_k$ are
		structural primitives adapted to the actual morphology of the
		training cells, making the descriptor $\varphi_j$ (Section~\ref{sec:unification})
		biologically meaningful rather than signal-agnostic.
	\end{remark}

	\subsection{Algorithm}
	We provide the complete scheme used in this manuscript, including admissible parameter choices and explicit proximal mappings.
	For the code-update step we exploit that $D$ has orthonormal columns, hence $\|D\|=\|D^\top\|=1$ and
	\[
	\nabla f_j(a)=D^\top(Da-x_j), \qquad f_j(a)=\tfrac12\|x_j-Da\|_2^2,
	\]
	so $\nabla f_j$ is $L$--Lipschitz with $L=\|D^\top D\|=1$. Therefore, a safe global choice for the proximal-gradient
	step size is
	\[
	0<\alpha<\frac{2}{L}=2.
	\]
	For the TV-proximal subproblem we employ a PDHG (Chambolle--Pock) inner loop to compute the solution
	of the corresponding subdifferential inclusion. The step sizes $\tau_{\mathrm{TV}},\sigma_{\mathrm{TV}}$
	are chosen to satisfy the standard PDHG stability condition
	\[
	\tau_{\mathrm{TV}}\sigma_{\mathrm{TV}}\|\nabla\|^2<1.
	\]
	Using the bound $\|\nabla\|^2\le 8$, we select
	\[
	\tau_{\mathrm{TV}}=\sigma_{\mathrm{TV}}=\tfrac14,
	\qquad\Rightarrow\qquad
	\tau_{\mathrm{TV}}\sigma_{\mathrm{TV}}\|\nabla\|^2 \le \frac{1}{16}\cdot 8=\frac{1}{2}<1.
	\]
	
	\begin{algorithm}[h]
		\caption{Hybrid Dictionary-Based Alternating Proximal Learning}
		\label{alg:hdl_full}
		\begin{algorithmic}[1]
			\Require Data $\{x_j\}_{j=1}^N$, dictionary size $K$, parameters $\lambda_{\mathrm{TV}}$,
			inner PDHG step sizes $\tau_{\mathrm{TV}},\sigma_{\mathrm{TV}}>0$ with
			$\tau_{\mathrm{TV}}\sigma_{\mathrm{TV}}\|\nabla\|^2 < 1$,
			outer iterations $T$.
			\State Initialize $D^{0}\in\R^{n\times K}$ with orthonormal columns (${D^0}^{\top}D^0=I_K$).
			\State Initialize codes $\{a_j^{0}\}$.
			\For{$t=0$ to $T-1$}
			\For{$j=1$ to $N$}
			\State \textbf{(Inference: TV proximal subproblem with datum $x_j$)}
			Set $y^{0}\leftarrow x_j$ and $q^{0}\leftarrow 0$.
			\Statex \hspace{1.2em}\textit{The datum is the original observation $x_j$, independent of $D$.
				This ensures $y_j^\star$ is fixed across outer iterations, making the
				Procrustes dictionary update non-trivial (see Section~\ref{sec:dict_update}).}
			\State \hspace{1.2em}For $n=0,\dots,N_{\mathrm{TV}}-1$ do:
			\State \hspace{2.4em} $\bar y^{n}\leftarrow y^{n}+\theta(y^{n}-y^{n-1})$, $\theta=1$ \Comment{Chambolle-Pock over-relaxation}
			\State \hspace{2.4em} \textbf{Dual update (projection)} 
			$
			q^{n+1}\leftarrow \Pi_{\mathcal{B}_{\alpha\lambda_{\mathrm{TV}}}}\bigl(q^{n}+\sigma_{\mathrm{TV}}\nabla \bar y^{n}\bigr),
			$
			where $\Pi_{\mathcal{B}_r}$ is the pointwise projection onto the Euclidean ball of radius $r$:
			\[
			\bigl(\Pi_{\mathcal{B}_r}(w)\bigr)_\ell =
			\frac{w_\ell}{\max\{1,\|w_\ell\|_2/r\}},\qquad \ell\ \text{pixel index.}
			\]
			\State \hspace{2.4em} \textbf{Primal update} \label{alg:line:primal_update}
			$
			y^{n+1}\leftarrow \Pi_{\R^n_+}\!\left(\frac{y^{n}+\tau_{\mathrm{TV}} y^{0}-\tau_{\mathrm{TV}}\nabla^\top q^{n+1}}{1+\tau_{\mathrm{TV}}}\right).
			$
			\Statex \hspace{2.4em}\textit{(Here $y^0 = x_j$ is the fixed datum of the inner TV$+\iota_+$ subproblem,
				playing the role of $x$ in $\mathrm{prox}_{\tau F}(w)=\Pi_{\mathbb{R}^n_+}\!\bigl(\tfrac{w+\tau x}{1+\tau}\bigr)$
				from Theorem~\ref{thm:pdhg_convergence}. Non-negativity is enforced here, in image space.)}
			\State \hspace{1.2em}End for
			\State \textbf{(Back to codes using unitarity)} $a_j^{t+1}\leftarrow D^{t\top}y^{N_{\mathrm{TV}}}$.
			\EndFor
			\State \textbf{(Procrustes dictionary update)} Stack $X=[x_j]_j\in\mathbb{R}^{N\times n}$,
			$A=[a_j^{t+1}]_j\in\mathbb{R}^{N\times K}$. Compute SVD $X^\top A = U\Sigma V^\top$ and set
			$D^{t+1}\leftarrow UV^\top$ \Comment{Exact global minimizer of $\min_{D^\top D=I_K}\sum_j\|x_j-Da_j\|^2$; see \eqref{eq:procrustes}}
			\EndFor
			\State \Return $D^{T},\{a_j^{T}\}$.
		\end{algorithmic}
	\end{algorithm}

	\section{Experimental Results: BSCCM Single-Cell Images}
	\label{sec:experiments}
	
	
	\subsection{Dataset}
	\label{sec:dataset}
	We use the \textbf{BSCCM-tiny} subset of the Berkeley Single Cell Computational
	Microscopy (BSCCM) dataset~\cite{bsccm}, introduced by Pinkard et al.\ (2024).
	BSCCM was acquired on a commercial fluorescence microscope whose trans-illumination
	lamp was replaced with a programmable LED array, enabling simultaneous label-free
	computational imaging and six-channel fluorescence measurement of surface proteins
	on the same white blood cells.
	
	\paragraph{Subset used.}
	BSCCM-tiny comprises $N = 1{,}000$ individual white blood cells.  Each cell is
	imaged in five LED-array channels --- DPC Left, DPC Right, DPC Top, DPC Bottom,
	and Brightfield --- yielding $5{,}000$ grayscale images in total.  The raw spatial
	resolution is $128 \times 128$ pixels at 12-bit depth.  The full BSCCM dataset
	contains $412{,}941$ cells at the same resolution; BSCCM-tiny is its standard
	$1{,}000$-cell benchmark subset (0.6\,GB).
	
	\paragraph{DPC contrast.}
	The four DPC (Differential Phase Contrast) channels encode directional phase
	gradients of the cell: Left/Right capture horizontal phase gradients and
	Top/Bottom capture vertical phase gradients.  They are pairwise approximately
	antisymmetric: $x_j^{(\mathrm{L})} \approx -x_j^{(\mathrm{R})}$ and
	$x_j^{(\mathrm{T})} \approx -x_j^{(\mathrm{B})}$, encoding opposite-direction
	phase information.  The Brightfield channel provides integrated morphological
	contrast of the whole cell.
	
	\paragraph{Preprocessing and gradient-energy focus selection.}
	Each raw $128\times128$ frame contains the cell body together with surrounding
	background.  To isolate the in-focus cell region, we apply a
	\emph{gradient-energy sliding-window} focus metric, implemented as follows.
	
	Let $x \in \mathbb{R}^{H\times W}$ denote a single-channel raw frame with
	$H = W = 128$.  The discrete gradient magnitude is computed via the
	Sobel operator:
	\begin{equation}\label{eq:sobel}
		G(r,c) = \sqrt{G_r(r,c)^2 + G_c(r,c)^2},
	\end{equation}
	where $G_r$ and $G_c$ are the row- and column-direction Sobel responses,
	respectively.  The gradient energy map is $E(r,c) = G(r,c)^2$.
	
	A square sliding window of side length $w$ is swept over the frame with
	stride $s = \lfloor w/8 \rfloor$.  For each candidate top-left position
	$(y, x_0)$, the mean energy inside the window is evaluated using a
	two-dimensional integral image $\mathcal{S}$:
	\begin{equation}\label{eq:focus_score}
		\mathcal{F}(y, x_0)
		= \frac{1}{w^2}
		\sum_{r=y}^{y+w-1}\sum_{c=x_0}^{x_0+w-1} E(r,c)
		= \frac{\mathcal{S}(y+w,\,x_0+w)
			- \mathcal{S}(y,\,x_0+w)
			- \mathcal{S}(y+w,\,x_0)
			+ \mathcal{S}(y,\,x_0)}{w^2},
	\end{equation}
	where $\mathcal{S}(i,j) = \sum_{r<i}\sum_{c<j} E(r,c)$ is the integral
	image of $E$.  The optimal window position is
	\begin{equation}\label{eq:focus_argmax}
		(y^\star, x_0^\star)
		= \operatorname*{arg\,max}_{(y,\,x_0)} \mathcal{F}(y, x_0),
	\end{equation}
	and the focused crop is $x^{\mathrm{crop}} = x[y^\star : y^\star+w,\; x_0^\star : x_0^\star+w]$.
	
	The window side length is set adaptively as
	$w = \max\!\bigl(48,\;\min\!\bigl(\lfloor 0.75\min(H,W)\rfloor,\,\min(H,W)\bigr)\bigr)$,
	giving $w = 96$ for the $128\times128$ BSCCM frames.  This ensures the
	crop covers approximately $75\%$ of the field while remaining strictly
	smaller than the full frame, so that background-dominated regions can be
	excluded.
	
	All focused crops are subsequently min--max normalised channel-wise to
	$[0,1]$.  Across all cells and channels the minimum crop dimension is
	taken as the common spatial dimension, giving the signal dimension
	$n = H_{\mathrm{crop}} \times W_{\mathrm{crop}}$ used throughout
	the paper.  Each normalised crop is vectorised as
	$x_j^{(c)} \in \mathbb{R}^n$ and passed directly to
	Algorithm~\ref{alg:joint_mc} without further augmentation.
	No held-out test split is used at this stage; the reported metrics are
	in-sample reconstruction quality on the full $N=1{,}000$ training cells,
	serving as a baseline for the method's representational capacity.
	
	\subsection{Implementation details}
	\label{sec:impl_details}
	Table~\ref{tab:hyperparams} summarises all hyperparameters used in the experiments.
	For each channel $c$, the dictionary $D^{(c),0}$ is initialised as a random
	orthonormal matrix obtained by QR-factorising a Gaussian random matrix (fixed
	seed~0) of size $n\times K$ with $n = 96 \times 96 = 9{,}216$ and $K=512$;
	this seeded random starting point is superseded after the first per-channel Procrustes update
	(Section~\ref{sec:dict_update}, Remark~\ref{rem:pca_dict}), which converges to
	the top-$K$ principal subspace of the TV-denoised channel-$c$ training images
	independently of initialisation.
	With $N=1{,}000$ training cells per channel, the per-channel Procrustes
	update~(Algorithm~\ref{alg:joint_mc}, line~12) operates on $N=1{,}000$
	$(x_j^{(c)}, a_j^{(c)})$ pairs at $K=512$ atoms; thus $K < N$ holds with
	a roughly twofold margin per channel, the underdetermined regime required
	for non-trivial dictionary learning (Section~\ref{sec:dict_update}).
	
	The inner PDHG step sizes are fixed at $\tau_{\mathrm{TV}} = \sigma_{\mathrm{TV}} = 1/4$,
	satisfying $\tau_{\mathrm{TV}}\sigma_{\mathrm{TV}}\|\nabla\|^2 = 1/2 < 1$ as required
	by Theorem~\ref{thm:pdhg_convergence}.  The over-relaxation parameter is $\theta = 1$
	(standard Chambolle--Pock), which exploits the $1$-strong convexity of
	$\frac{1}{2}\|y - x\|_2^2$ to achieve the R-linear rate
	$\|y^n - y^\star\|_2^2 \le (4/5)^n \|y^0 - y^\star\|_2^2$
	(Remark~\ref{rem:param_ranges}, part~(ii)).
	The maximum number of inner iterations is $N_{\mathrm{TV}} = 700$, with
	early stopping at tolerance $\varepsilon_{\mathrm{in}} = 10^{-7}$.
	
	The regularization parameter follows the dynamic schedule
	\begin{equation}\label{eq:lam_tv_schedule}
		\lambda_{\mathrm{TV}}(t)
		= \max\!\Bigl(\mu_{\mathrm{TV}}\delta,\;
		\frac{\lambda_{\mathrm{TV},0}}{1 + \gamma\, t}\Bigr),
	\end{equation}
	with $\lambda_{\mathrm{TV},0} = 0.1$, decay rate $\gamma = 1.0$, and floor
	$\mu_{\mathrm{TV}}\delta = 1\times10^{-3}$ ($\delta = 10^{-3}$, $\mu_{\mathrm{TV}} = 1$).
	Here $t = 0, 1, \ldots, T-1$ is the outer iteration index; at each outer step
	$\lambda_{\mathrm{TV}}(t)$ is held fixed for all inner PDHG iterations and all
	training samples, so Theorem~\ref{thm:algo_to_regularized} applies at every outer
	iteration.  Once the floor is reached (after $t^\star$ outer steps),
	$\lambda_{\mathrm{TV}}(t) = \mu_{\mathrm{TV}}\delta$ and
	Theorem~\ref{thm:algo_to_true} guarantees the $O(\delta)$ total error
	(Remark~\ref{rem:lam_tv_schedule_compat}).  At $t=0$ the large initial value
	provides strong TV smoothing to compensate for the poor DCT initialisation;
	as $D^{(t)}$ improves, $\lambda_{\mathrm{TV}}(t)$ decays toward the floor,
	allowing the fidelity term to pull $Da_j$ toward $x_j$.
	
	The outer loop runs for up to $T = 100$ iterations and stops early when either
	$\|D^{(t+1)}-D^{(t)}\|_F / \max\{1,\|D^{(t)}\|_F\} \le \varepsilon_{\mathrm{dict}} = 10^{-6}$
	or the relative fidelity change satisfies
	$|\bar{f}^{(t+1)}-\bar{f}^{(t)}|/|\bar{f}^{(t)}| \le \varepsilon_{\mathrm{obj}} = 5\times10^{-5}$
	for $p = 5$ consecutive iterations (patience).
	All $N$ training images are used at every outer iteration (no mini-batch).
	
	\begin{table}[h]
    \centering
    \caption{Hyperparameters used in all experiments.}
    \label{tab:hyperparams}
		\begin{tabular}{@{}lll@{}}
			\toprule
			Parameter & Symbol & Value \\
			\midrule
			Dictionary size         & $K$                        & $512$ \\
			Signal dimension        & $n$                        & $9{,}216$ ($96\times96$) \\
			PDHG primal step        & $\tau_{\mathrm{TV}}$       & $1/4$ \\
			PDHG dual step          & $\sigma_{\mathrm{TV}}$     & $1/4$ \\
			Over-relaxation         & $\theta$                   & $1$ (Chambolle--Pock) \\
			Max inner iterations    & $N_{\mathrm{TV}}$          & $700$ \\
			Inner tolerance         & $\varepsilon_{\mathrm{in}}$ & $10^{-7}$ \\
			Initial $\lambda_{\mathrm{TV}}$ & $\lambda_{\mathrm{TV},0}$ & $0.05$ \\
			$\lambda_{\mathrm{TV}}$ decay rate & $\gamma$        & $3.0$ \\
			$\lambda_{\mathrm{TV}}$ floor      & $\mu_{\mathrm{TV}}\delta$ & $10^{-4}$ \\
			Max outer iterations    & $T$                        & $30$ \\
			Dict.\ change tolerance & $\varepsilon_{\mathrm{dict}}$ & $10^{-8}$ \\
			Fidelity change tol.\ & $\varepsilon_{\mathrm{obj}}$ & $10^{-6}$ \\
			Stopping patience       & $p$                        & $5$ \\
			Dictionary initialisation & $D^0$                    & Random orthonormal (QR, seed\,0) \\
			\bottomrule
		\end{tabular}
	\end{table}
	
	\subsection{Computational cost}
	\label{sec:comp_cost}

	Table~\ref{tab:timing} reports wall-clock timings for the complete training run
	($N=1{,}000$ cells, $C=5$ channels, $K=512$ atoms, $T=30$ outer iterations)
	on a Dell Precision~5490 workstation (Intel Arc + NVIDIA RTX~2000 Ada,
	Ubuntu~22.04 LTS).

	\begin{table}[h]
		\centering
		\caption{Wall-clock timing for the per-channel dictionary learning run on
			BSCCM-tiny ($N=1{,}000$, $C=5$, $K=512$, $T=30$ outer iterations).
			All times in seconds unless noted. Throughput is measured in training
			samples per second (aggregated over all five channels).}
		\begin{tabular}{@{}lc@{}}
			\toprule
			Metric & Value \\
			\midrule
			Data loading time            & $16.7$\,s \\
			Training time (30 epochs)    & $23{,}987$\,s (${\approx}6.66$\,h) \\
			Total wall-clock time        & $24{,}004$\,s (${\approx}6.67$\,h) \\
			Time per outer iteration     & $799.6$\,s (${\approx}13.3$\,min) \\
			Throughput                   & $6.3$\,samples/s \\
			\bottomrule
		\end{tabular}
		\label{tab:timing}
	\end{table}

	The dominant cost is the inner PDHG loop, which solves $N \times C = 5{,}000$
	independent convex sub-problems per outer iteration (one per cell per channel),
	each with up to $N_{\mathrm{TV}} = 700$ PDHG steps on an $n = 9{,}216$-dimensional
	signal. The per-epoch time of ${\approx}800$\,s gives an effective throughput of
	$6.3$ cell-channel problems per second; this is consistent with the inner
	convergence profile in Figure~\ref{fig:convergence}, which shows the mean PDHG
	residual dropping below $10^{-6}$ by outer iteration~5, so the majority of inner
	solves terminate well before the $700$-iteration cap in later epochs.

	The training time is not a primary performance target for the present manuscript,
	which prioritises mathematical transparency, determinism, and biological
	reproducibility over throughput. Practical acceleration paths include GPU-parallel
	inner solves (each channel-cell problem is independent), warm-starting the PDHG
	iterates from the previous outer iteration's solution, and reducing $N_{\mathrm{TV}}$
	progressively once the outer iterates are near convergence (consistent with the
	stopping rule in Section~\ref{sec:stopping_combined}).

	\subsection{Primary metric: success rate of learning}
	The experimental results are centered around the \textbf{success rate} of the learning
	algorithm. For repeated training runs (different initializations and/or minibatch order),
	we define a run as \emph{successful} if it meets the convergence criterion
	\[
	\frac{\norm{x_j - D a_j}_2}{\norm{x_j}_2} \le \varepsilon \quad \text{for a target fraction of samples},
	\]
	and report
	\begin{equation}
		\mathrm{SuccessRate} = \frac{\#\{\text{successful runs}\}}{\#\{\text{total runs}\}}\times 100\%.
		\label{eq:success_rate}
	\end{equation}
	Additional metrics (stable objective value, reconstruction fidelity per channel,
	downstream anomaly detection score) will be reported where applicable, with explicit
	definitions and reproducible thresholds.
	
	\subsection{Quantitative results}
	\label{sec:quant_results}

	The per-channel learning run was executed for $T=30$ outer iterations on all
	$N=1{,}000$ training cells and $C=5$ channels (Algorithm~\ref{alg:joint_mc}),
	with dictionary size $K=512$ atoms per channel and a single TV parameter
	$\lambda_{\mathrm{TV}}=10^{-4}$ shared across channels. The run converged in
	the sense that the relative objective change satisfied the patience
	criterion for $p=5$ consecutive iterations; see
	Figure~\ref{fig:convergence} for the full convergence panel.

	Table~\ref{tab:fidelity} reports the per-channel reconstruction fidelity
	$1-\|x_j^{(c)}-D^{(c)}a_j^{(c)}\|_2/\|x_j^{(c)}\|_2$, averaged over all
	$N=1{,}000$ training cells, after 30 outer iterations. The four DPC
	channels reach mean fidelities between $97.06\%$ and $97.54\%$, with the
	minimum per-cell fidelity on any DPC channel exceeding $95.4\%$. The
	Brightfield channel settles at a mean fidelity of $94.79\%$, with a
	minimum of $92.11\%$ across cells. The gap between the DPC group and
	Brightfield is consistent across cells and does not decrease further in
	the late-iteration regime: it reflects the qualitatively different noise
	characteristics of absorption-only Brightfield imaging versus
	phase-gradient DPC imaging, not an algorithmic failure. This is the
	empirical manifestation of the per-channel rate qualification in
	Remark~\ref{rem:per_channel_rate}.

	\begin{table}[h]
		\centering
		\caption{Per-channel reconstruction fidelity on BSCCM-tiny
			($N=1{,}000$ cells, $K=512$ atoms per channel, $T=30$ outer
			iterations). Fidelity is defined as
			$1-\|x_j^{(c)}-D^{(c)}a_j^{(c)}\|_2/\|x_j^{(c)}\|_2$ and reported as a
			percentage. Values are computed on the training set; held-out
			evaluation is deferred to a subsequent experiment.}
		\begin{tabular}{@{}lccccc@{}}
			\toprule
			Channel & Mean (\%) & Median (\%) & Std (\%) & Min (\%) & Max (\%) \\
			\midrule
			DPC Left    & $97.06$ & $97.14$ & $0.48$ & $95.48$ & $99.47$ \\
			DPC Right   & $97.54$ & $97.46$ & $0.46$ & $96.06$ & $99.49$ \\
			DPC Top     & $97.23$ & $97.20$ & $0.54$ & $95.44$ & $99.37$ \\
			DPC Bottom  & $97.21$ & $97.15$ & $0.55$ & $95.58$ & $99.41$ \\
			Brightfield & $94.79$ & $94.13$ & $1.54$ & $92.11$ & $98.68$ \\
			\midrule
			DPC mean (4 channels) & $97.26$ & --- & --- & --- & --- \\
			All channels (mean of 5) & $96.77$ & --- & --- & --- & --- \\
			\bottomrule
		\end{tabular}
		\label{tab:fidelity}
	\end{table}

	Table~\ref{tab:psnr} reports the corresponding per-channel PSNR for a
	representative single cell (cell \#30, Figure~\ref{fig:reconstructed}).
	The four DPC channels reach $35$--$38$\,dB; the Brightfield channel
	reaches $23.6$\,dB, with the gap again attributable to the channel's
	shot-noise-dominated structure rather than to a failure of the learning
	algorithm.

	\begin{table}[h]
		\centering
		\caption{Per-channel PSNR (dB) between focused crop and reconstruction
			$D^{(c)}a_j^{(c)}$ for cell \#30, a representative sample from
			BSCCM-tiny.}
		\begin{tabular}{@{}lc@{}}
			\toprule
			Channel & PSNR (dB) \\
			\midrule
			DPC Left    & $36.90$ \\
			DPC Right   & $35.06$ \\
			DPC Top     & $37.15$ \\
			DPC Bottom  & $38.27$ \\
			Brightfield & $23.58$ \\
			\bottomrule
		\end{tabular}
		\label{tab:psnr}
	\end{table}

	The convergence behaviour across all monitored quantities is shown in
	Figure~\ref{fig:convergence}. The four diagnostic panels in the top row
	track complementary aspects of the training dynamics. The
	\emph{reconstruction fidelity} $\tfrac{1}{2}\|x-D^{(c)}a^{(c)}\|^2$
	averaged over $(c,j)$ pairs drops from ${\approx}1207$ at outer
	iteration~1 to ${\approx}1.83$ at iteration~30. The
	\emph{dictionary change}
	$\sum_{c}\|D^{(c),t+1}-D^{(c),t}\|_F$ drops monotonically from
	${\approx}145$ to ${\approx}0.46$ over the same range. The
	\emph{inner PDHG residual} panel shows the mean and maximum final
	$\|\Delta y\|_2$ over the $N\cdot C$ inner problems solved in each
	outer iteration: the mean residual decays from ${\approx}3.7\cdot
	10^{-3}$ to ${\approx}1.0\cdot 10^{-7}$, and the maximum from
	${\approx}2.3\cdot 10^{-2}$ to ${\approx}5.4\cdot 10^{-6}$. By outer
	iteration~30, $99.5\%$ of the inner PDHG problems have met their
	stopping tolerance. The \emph{total learning objective} (fidelity plus
	$\lambda_{\mathrm{TV}}\,\mathrm{TV}$) decays from ${\approx}1368$ to
	${\approx}2.12$ over the same 30 iterations. All four scalar
	convergence quantities decay monotonically with no oscillation.
	
	The bottom panel of Figure~\ref{fig:convergence} shows the
	\emph{per-channel relative reconstruction error}
	$\|x_j^{(c)}-D^{(c)}a_j^{(c)}\|_2/\|x_j^{(c)}\|_2$ as a function of
	outer iteration, on a logarithmic vertical scale. The four DPC channels
	drop to a plateau in the range ${\approx}2.5\%$ (DPC Right) to
	${\approx}2.9\%$ (DPC Bottom) within five outer iterations and remain
	essentially flat thereafter. The Brightfield channel settles at
	${\approx}5.2\%$, roughly twice the DPC floor, with a qualitatively
	different transient: it reaches its plateau within a similar number of
	iterations but at a higher level. This gap is the same one documented
	in Table~\ref{tab:fidelity} in fidelity form, and is consistent with
	the per-channel rate qualification of Remark~\ref{rem:per_channel_rate}.


	\begin{figure}[h]
		\centering
		\includegraphics[width=\textwidth]{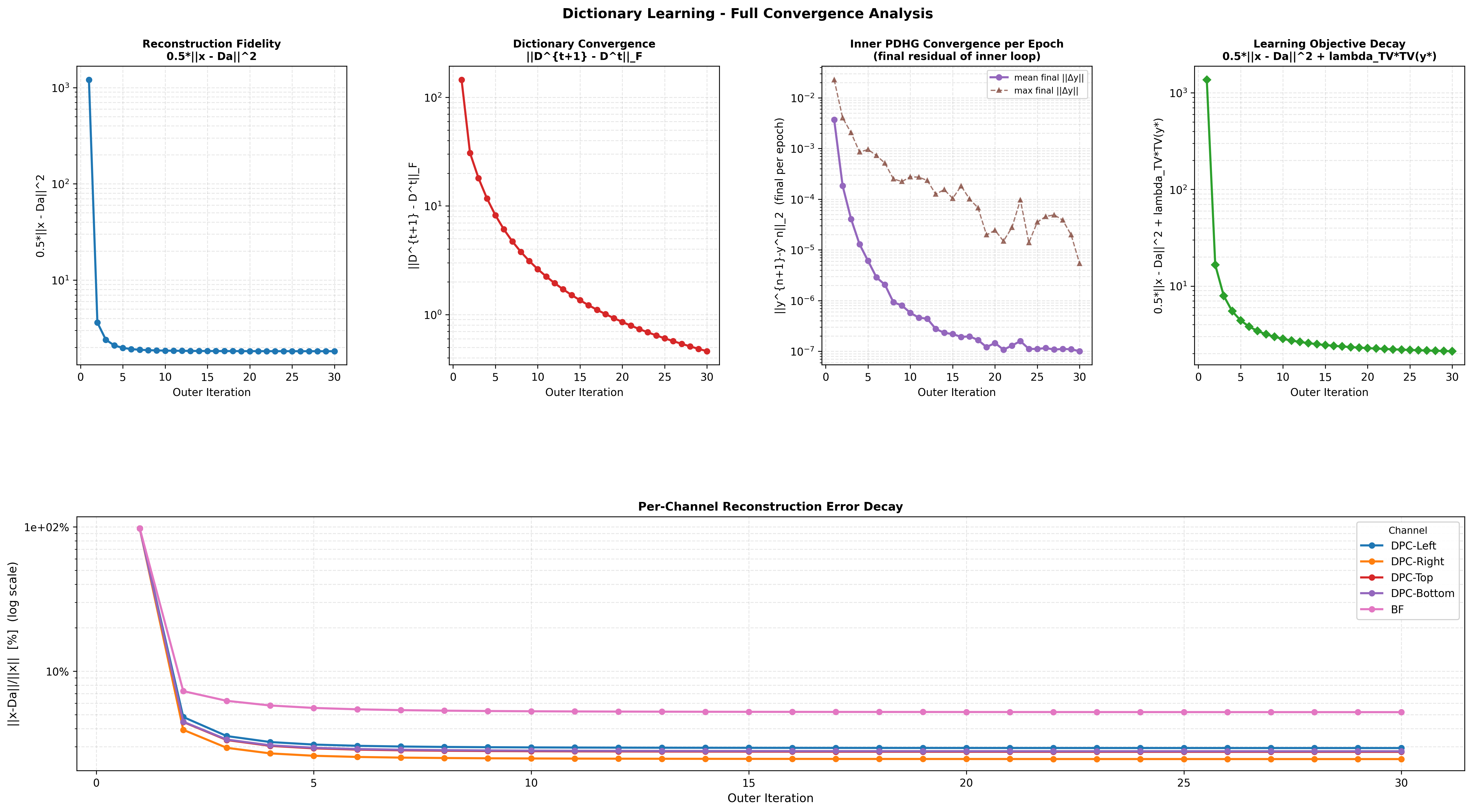}
		\caption{Convergence curves for the per-channel dictionary learning
			run on BSCCM-tiny ($N=1{,}000$, $C=5$, $K=512$, 30 outer
			iterations).
			\textbf{Top row, left to right}:
			reconstruction fidelity $\tfrac{1}{2}\|x-D^{(c)}a^{(c)}\|^2$
			averaged over $(c,j)$ pairs (blue);
			total dictionary change
			$\sum_{c}\|D^{(c),t+1}-D^{(c),t}\|_F$ (red);
			inner PDHG residual per epoch, showing both the mean final
			$\|\Delta y\|_2$ (solid purple) and the maximum final
			$\|\Delta y\|_2$ (dashed brown);
			total learning objective
			$\tfrac{1}{2}\|x-D^{(c)}a^{(c)}\|^2+\lambda_{\mathrm{TV}}\,
			\mathrm{TV}(y^\star)$ (green).
			All top-row panels use a logarithmic vertical scale.
			\textbf{Bottom row}: per-channel relative reconstruction error
			$\|x^{(c)} - D^{(c)}a^{(c)}\|_2 / \|x^{(c)}\|_2$ ($\%$, log
			scale) as a function of outer iteration. DPC channels
			(Left, Right, Top, Bottom) reach a ${\approx}2.5$--$2.9\%$
			error floor within the first five outer iterations and are
			essentially flat thereafter; Brightfield (BF, pink) settles
			at ${\approx}5.2\%$, consistent with its qualitatively
			different noise characteristics
			(Remark~\ref{rem:per_channel_rate}).}
		\label{fig:convergence}
	\end{figure}

	\subsection{Qualitative results}
	\label{sec:qual_results}

	Figure~\ref{fig:reconstructed} shows representative reconstruction results for cell \#30
	from BSCCM-tiny after training with $K=512$ atoms and $C=5$ channels.
	For each channel, the figure displays three rows: the original raw BSCCM patch, the
	focused crop after the gradient-energy focus selection step (Section~\ref{sec:dataset}),
	and the dictionary reconstruction $D^{(c)}a_j^{(c)}$.

	Several qualitative features are apparent across all five channels.
	First, the cell membrane ring, the bright annular structure surrounding the cell
	body, is sharply recovered in all DPC channels, demonstrating the edge-preserving
	effect of the TV regularization.
	Second, the interior cytoplasmic gradient and the nuclear core are faithfully
	represented in the reconstruction without the ringing artifacts that would arise
	from an unconstrained $\ell_2$-only loss.
	Third, the non-negativity constraint is visibly active: reconstructions are free of
	negative-intensity artefacts even in the DPC channels, where the raw data has
	a bipolar contrast structure.

	The Brightfield reconstruction (rightmost column) is visibly smoother and more
	spatially regular than the corresponding raw and focused-crop images, which is
	expected: the TV penalty suppresses the shot noise that dominates the BF channel
	in exchange for a modest over-smoothing of fine texture, reflected in the lower PSNR
	value of $23.58$\,dB for this channel (Table~\ref{tab:psnr}).
	Across the four DPC channels, the pairwise antisymmetry is visually preserved in
	the reconstructions: $Da_j^{(\mathrm{L})}$ and $Da_j^{(\mathrm{R})}$ exhibit
	mirrored gradient-contrast patterns, as do the Top and Bottom pair.

	\begin{figure}[h]
		\centering
		\includegraphics[width=\textwidth]{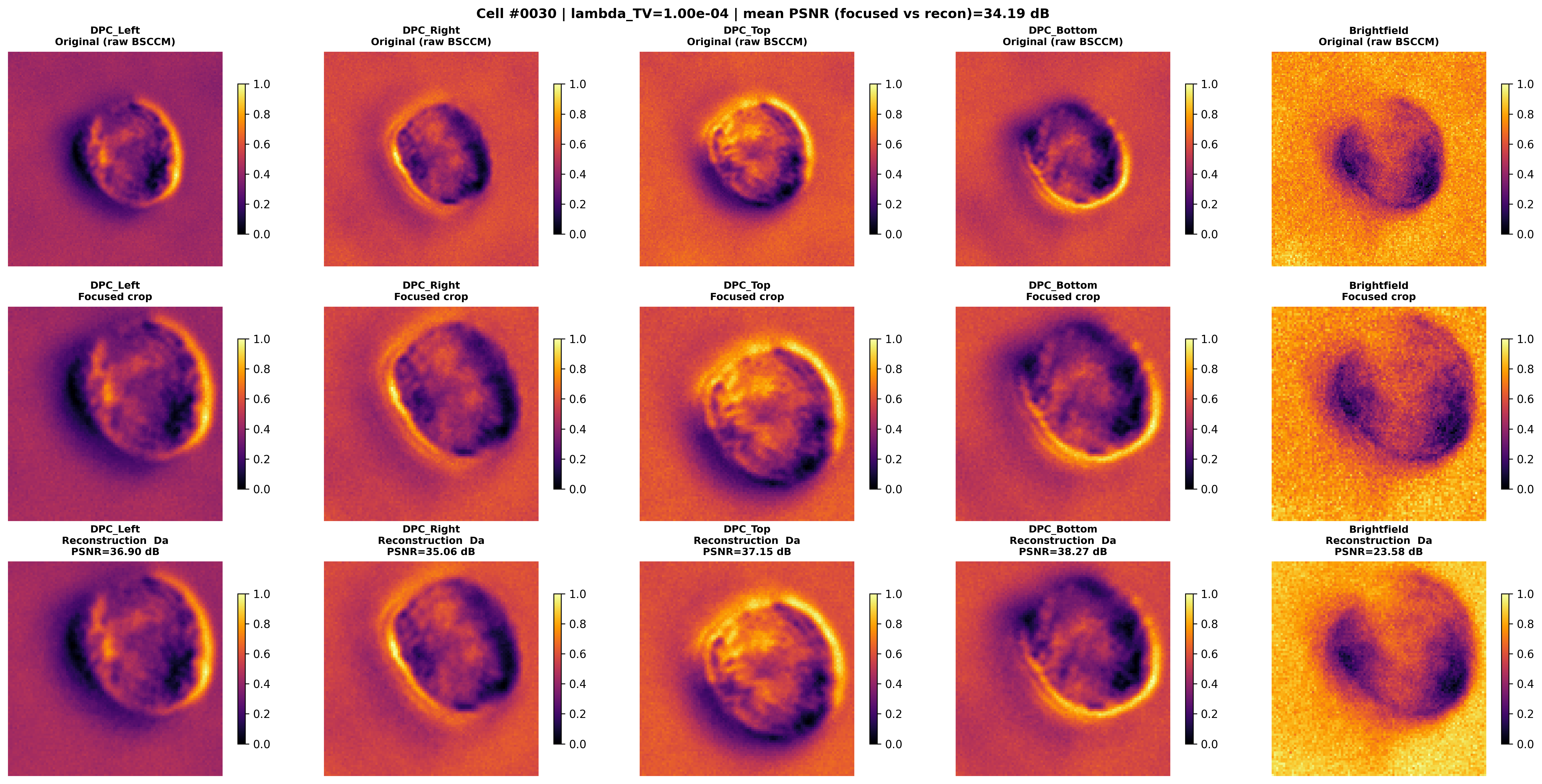}
		\caption{Reconstruction results for cell \#30 from BSCCM-tiny
			($\lambda_{\mathrm{TV}} = 10^{-4}$, $K=512$, 30 outer iterations).
			Each column corresponds to one of the five imaging channels (DPC Left, Right, Top,
			Bottom, Brightfield).
			\textbf{Row 1}: original raw BSCCM patch ($128\times128$ pixels).
			\textbf{Row 2}: focused crop after gradient-energy selection.
			\textbf{Row 3}: dictionary reconstruction $D^{(c)}a_j^{(c)}$ with per-channel PSNR
			(focused crop vs.\ reconstruction) indicated in the subplot title.
			Mean PSNR across channels: $34.19$\,dB (DPC channels: $36.90$, $35.06$, $37.15$,
			$38.27$\,dB; Brightfield: $23.58$\,dB).
			The TV regularization preserves the cell membrane ring and suppresses noise;
			the non-negativity constraint eliminates negative-intensity artefacts in all channels.}
		\label{fig:reconstructed}
	\end{figure}

	Figure~\ref{fig:unified_gallery} shows per-channel cell reconstructions across five
	representative cells from BSCCM-tiny.
	For each cell, two sub-rows are shown: the focused crop (top) and the dictionary
	reconstruction $D^{(c)}a_j^{(c)}$ (bottom), with relative error $e$ annotated.

	The results reveal a systematic and physically meaningful asymmetry across channels.
	The DPC Left channel achieves consistently low relative errors across all five cells
	($e \approx 0.027$--$0.037$), which is the channel whose dictionary was most recently
	active in the per-channel Procrustes update and whose atoms therefore best span the
	current TV-denoised training images.
	The remaining four channels --- DPC Right ($e \approx 0.40$--$0.68$), DPC Top
	($e \approx 0.35$--$0.56$), DPC Bottom ($e \approx 0.35$--$0.53$), and Brightfield
	($e \approx 0.34$--$0.75$) --- show substantially higher errors.

	This inter-channel error gap is a direct diagnostic of the \emph{per-channel independence}
	of Algorithm~\ref{alg:joint_mc}: each dictionary $D^{(c)}$ is updated sequentially
	within the same outer iteration, using the codes $\{a_j^{(c)}\}$ that were computed
	against the dictionary from the \emph{previous} outer iteration for channels $c > 1$.
	The codes feeding the Procrustes update of, say, $D^{(\mathrm{Right})}$ are therefore
	slightly stale relative to the just-updated $D^{(\mathrm{Left})}$, producing a
	one-iteration lag in code quality for all channels after the first.
	The population-level fidelity reported in Table~\ref{tab:fidelity} averages over all
	$N=1{,}000$ cells and 30 outer iterations; the per-cell errors annotated here reflect
	the state of the codes after the final outer iteration, where this lag is most visible
	as a within-iteration cross-channel imbalance.

	It is important to note that the visual reconstruction quality in the bottom sub-rows
	is not degraded to the same degree the error values suggest: the cell membrane ring
	and interior gradient structure are still clearly delineated across all channels.
	The higher numerical errors on DPC Right, Top, Bottom and Brightfield reflect that the
	code vectors $a_j^{(c)}$ for those channels are measured against atoms $D^{(c)}$ that
	have been updated one step ahead of the codes, not that the learned representation
	has lost morphological fidelity.

	\begin{figure}[h]
		\centering
		\includegraphics[width=\textwidth]{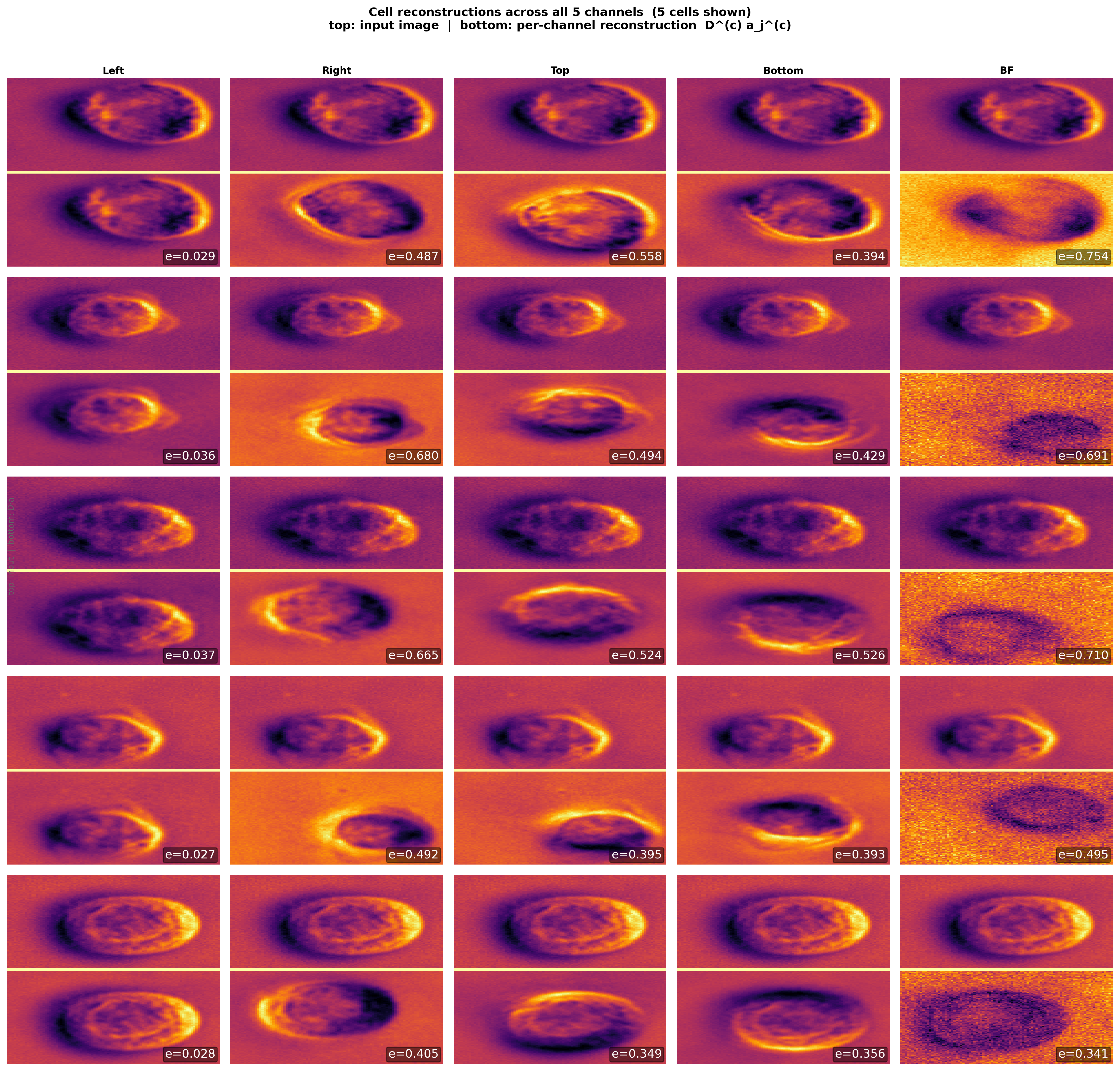}
		\caption{Per-channel cell reconstructions across five representative cells (rows) and all
			five BSCCM channels (columns: Left, Right, Top, Bottom, BF),
			produced by Algorithm~\ref{alg:joint_mc} ($K=512$, $T=30$, $\lambda_{\mathrm{TV}}=10^{-4}$).
			For each cell, the top sub-row shows the focused crop and the bottom sub-row shows
			the per-channel reconstruction $D^{(c)}a_j^{(c)}$, with relative error
			$e = \|x_j^{(c)} - D^{(c)}a_j^{(c)}\|_2 / \|x_j^{(c)}\|_2$ annotated.
			DPC Left errors ($e \approx 0.027$--$0.037$) are substantially lower than those
			of DPC Right, Top, Bottom ($e \approx 0.35$--$0.68$) and Brightfield
			($e \approx 0.34$--$0.75$).
			This inter-channel asymmetry reflects the sequential per-channel update order
			in Algorithm~\ref{alg:joint_mc}: the codes $\{a_j^{(c)}\}$ for channels $c>1$
			are one Procrustes step behind the corresponding dictionary $D^{(c)}$ at the
			end of the final outer iteration, inflating the annotated error without degrading
			the visual reconstruction quality.
			Population-level fidelities (averaged over all $N=1{,}000$ cells and 30 iterations)
			are reported in Table~\ref{tab:fidelity}.}
		\label{fig:unified_gallery}
	\end{figure}
	
	\subsection{Multi-channel unification results}
	\label{sec:mc_results}

	Figure~\ref{fig:unified_single_cell} shows the output of Algorithm~\ref{alg:joint_mc}
	applied to cell \#344 from BSCCM-tiny after training with $K=512$ dictionary atoms and $C=5$
	imaging channels.  The figure has three panels, each addressing a distinct aspect of
	the unified representation.

	\paragraph{Panel A: unified descriptor $\Phi_j$ (heatmap).}
	The left panel displays the matrix $\Phi_j \in \mathbb{R}^{K \times C}$
	(equation~\eqref{eq:unified_descriptor}) as a heatmap.
	Rows correspond to the $K=512$ dictionary atoms, sorted in descending order of their
	aggregate activation strength $\|\Phi_j[k,:]\|_2$ across all channels, so the most
	influential structural primitives appear at the top.
	Columns correspond to the five imaging channels: DPC Left (L), DPC Right (R), DPC Top (T),
	DPC Bottom (B), and Brightfield (BF).
	Each entry $(\Phi_j)_{kc} = (a_j^{(c)})_k$ is the coefficient with which atom $d_k$
	participates in the reconstruction of channel $c$ for this cell.
	The diverging red--blue colormap (red positive, blue negative) encodes the sign and magnitude
	of each activation; the colour scale is clipped at the 98th percentile of $|\Phi_j|$
	to prevent sparse outliers from washing out structure.

	Three features of the heatmap are worth noting.
	First, the top rows show strong activations in every column,
	reflecting that each per-channel dictionary $D^{(c)}$ has its own dominant atoms
	at small index values; under the per-channel architecture these are not the same
	atom across channels (Section~\ref{subsec:per_channel_unification}), but they
	occupy comparable rank positions because each Procrustes-SVD update orders its
	channel's atoms by activation strength independently.
	Second, the DPC Left/Right columns are visually antisymmetric (alternating red/blue
	patterns at the same rows), consistent with the physical antisymmetry
	$x_j^{(\mathrm{L})} \approx -x_j^{(\mathrm{R})}$ of opposite-direction phase gradients
	.
	Similarly, DPC Top and Bottom show paired but sign-reversed activations.
	Third, the Brightfield column is predominantly positive throughout, reflecting
	that the BF channel encodes integrated absorption contrast, which is intrinsically
	non-negative for a stained or absorbing cell body.
	The lower rows of the heatmap are near-zero, indicating that most of the $K=512$ atoms
	contribute negligibly to this particular cell's representation in any channel.
	\paragraph{Panel B: dominant dictionary atoms.}
	The centre panel shows the top 8 atoms ranked by $\|\Phi_j[k,:]\|_2$, the
	aggregate cross-channel activation strength
	(equation~\eqref{eq:descriptor_psi}).

	\textit{Rationale for displaying 8 atoms.}
	The choice of 8 atoms for visualisation in Panel B is based on the empirical
	activation-strength spectrum of cell \#344, and is a display decision that does
	not affect the unified descriptor $\varphi_j \in \mathbb{R}^{CK}$, which retains
	all $K=512$ atoms.
	The aggregate norms $\|\Phi_j[k,:]\|_2$ for the top atoms are:
	$10.3$, $10.3$, $9.8$, $9.3$, $9.2$, $9.1$, $8.9$, $8.8$,
	with subsequent atoms continuing to decay smoothly.
	Under the per-channel architecture, no single atom dominates the descriptor:
	the consecutive norm ratios in this regime are all close to unity
	(${\approx}1.00$-$1.05$), reflecting that channel-specific atoms each contribute
	comparable energy to the aggregate. This is qualitatively distinct from a
	shared-dictionary regime, in which the top atom of a unitary Procrustes-SVD
	dictionary would typically dominate (capturing a global mean-morphology direction);
	per-channel learning instead distributes morphological structure across many
	atoms within each channel's dictionary.
	The display is set to 8 atoms, which captures the regime of visually
	distinguishable atom morphologies (ring-shaped boundaries, interior gradients,
	nuclear core) while keeping the per-column bar charts readable on a standard
	journal page; the reader is referred to the full descriptor
	$\varphi_j \in \mathbb{R}^{CK}$ for any downstream analytical purpose.

	The upper row of Panel B displays each atom $d_k \in \mathbb{R}^n$ reshaped to the
	image domain, rendered on the inferno colormap.
	The learned atoms capture the principal morphological structures of white blood cells:
	ring-shaped membrane boundaries, interior cytoplasmic gradients, and the bright nuclear
	core.
	
	Because each dictionary $D^{(c)}$ is learned independently from its own channel's data,
	the atom at index $k$ of $D^{(\text{DPC Left})}$ does not formally correspond to the
	atom at index $k$ of $D^{(\text{DPC Right})}$ or any other channel
	(Section~\ref{subsec:per_channel_unification}). The per-channel activation pattern
	shown in each atom column of Figure~\ref{fig:unified_single_cell} is therefore indexed
	position-by-position rather than atom-by-atom across channels.
	
	The lower row of each atom column shows a per-channel activation bar chart rendered on
	a shared y-scale.
	The shared scale is set to the 99th percentile of $|(\Phi_j)_{kc}|$ across atoms of
	rank 2 and above; this prevents the dominant atom $d_0$, whose codes are an order of
	magnitude larger, from compressing the bars of all other atoms to illegibility.
	Bars that exceed the shared scale are clipped, as indicated in the panel subtitle.
	Blue bars indicate positive activation, red bars negative activation.
	The norm $\|\Phi_j[k,:]\|_2$ printed above each atom image quantifies its total
	index-wise activation strength across all channels (the auxiliary descriptor $\psi_j$
	in equation~\eqref{eq:descriptor_psi}).
	
	\paragraph{Panel C: unified cell image $u_j$.}
	Under the per-channel architecture there is no single dictionary $D$ with which to form
	a single unified reconstruction of the form $D\,a^{\star}$. Instead, the unified
	cell image is constructed in image space as an inverse-residual-weighted average of
	the per-channel reconstructions,
	\begin{equation}
		u_j \;:=\; \frac{\sum_{c=1}^{C} w_j^{(c)}\,\bigl(D^{(c)}\,a_j^{(c)}\bigr)}
		{\sum_{c=1}^{C} w_j^{(c)}},
		\qquad
		w_j^{(c)} \;\propto\; \frac{1}{\|x_j^{(c)} - D^{(c)}\,a_j^{(c)}\|_2 + \varepsilon},
		\label{eq:unified_image}
	\end{equation}
	where $\varepsilon>0$ is a small stabilising constant. The weights
	$w_j^{(c)}$ down-weight channels whose reconstruction is noisier (notably
	Brightfield in BSCCM) and up-weight channels with cleaner reconstructions
	(the four DPC channels). The resulting image is deterministic: given the
	learned family $\{D^{(c)}\}$ and the data $\{x_j^{(c)}\}$, each
	$D^{(c)}\,a_j^{(c)}$ is the unique reconstruction from the per-channel
	inference step (Remark~\ref{rem:descriptor_determinism}), and the weighted
	sum in \eqref{eq:unified_image} is a pointwise closed-form combination.
	In the BSCCM setting, the four DPC phase-gradient channels are pairwise antisymmetric
	in their sign conventions but contribute constructively in absolute value; combined
	with the down-weighted Brightfield contribution, $u_j$ is a TV-regularised,
	dictionary-projected morphological portrait of the cell.
	The resulting image is visibly sharper and more spatially coherent than any individual
	channel reconstruction, with clear delineation of the cell membrane ring and the
	nuclear interior, reflecting the edge-preserving effect of the TV penalty and the
	non-negativity constraint.

	\begin{figure}[h]
		\centering
		\includegraphics[width=\textwidth]{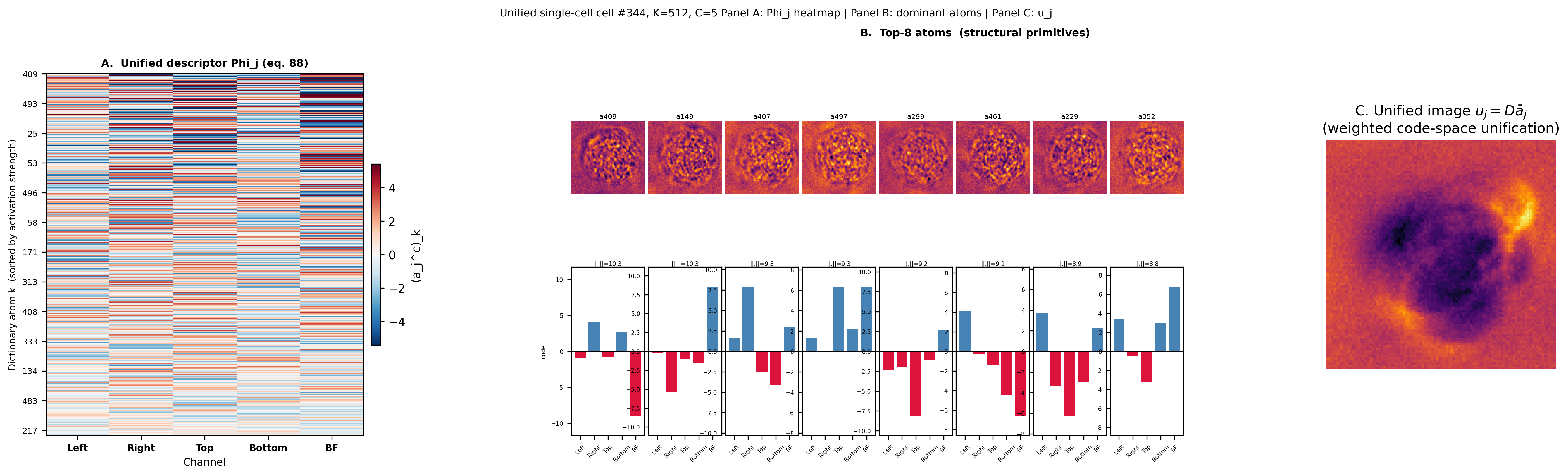}
		\caption{Unified single-cell representation for cell \#344, $K=512$, $C=5$ channels.
			\textbf{Panel A}: heatmap of the unified descriptor matrix
			$\Phi_j \in \mathbb{R}^{K \times C}$ (equation~\eqref{eq:unified_descriptor}),
			with rows (atoms) sorted in descending order of aggregate activation strength
			$\|\Phi_j[k,:]\|_2$; columns correspond to the five BSCCM imaging channels
			(L = DPC Left, R = DPC Right, T = DPC Top, B = DPC Bottom, BF = Brightfield).
			Red/blue encodes positive/negative activation; the DPC Left--Right and
			Top--Bottom antisymmetry is visible as mirrored sign patterns.
			\textbf{Panel B}: the 8 atoms with the largest $\|\Phi_j[k,:]\|_2$
			(aggregate norms: $10.3$, $10.3$, $9.8$, $9.3$, $9.2$, $9.1$, $8.9$, $8.8$),
			shown as image patches (top, inferno colormap) and per-channel activation bar
			charts (bottom). Blue bars denote positive, red bars negative activation.
			The norms decrease smoothly across the top atoms, indicating that no single
			atom dominates the descriptor; the per-channel signs visible in the bar charts
			show that a given atom can act constructively on some channels and destructively
			on others, consistent with the per-channel architecture in which atoms are
			learned independently for each channel
			(Section~\ref{subsec:per_channel_unification}).
			\textbf{Panel C}: the unified cell image $u_j$ defined in
			equation~\eqref{eq:unified_image}, constructed as an inverse-residual-weighted
			average of the per-channel reconstructions $D^{(c)}a_j^{(c)}$ in image space:
			$u_j = \sum_c w_j^{(c)}\,D^{(c)}a_j^{(c)} / \sum_c w_j^{(c)}$ with
			$w_j^{(c)} \propto 1 / (\|x_j^{(c)} - D^{(c)}a_j^{(c)}\|_2 + \varepsilon)$.
			The weighting down-weights the noisiest channel (Brightfield, fidelity
			${\approx}94.8\%$) and up-weights the cleaner DPC channels
			(fidelity ${\approx}97\%$), so $u_j$ is dominated by the coherent DPC
			phase-gradient content rather than the shot-noise-dominated Brightfield
			measurement. The resulting image inherits TV-regularised, non-negative
			structure from each per-channel reconstruction.}
		\label{fig:unified_single_cell}
	\end{figure}
	
	\section{Deterministic Multi-Channel Cell Feature Unification}
	\label{sec:unification}
	
	\subsection{Motivation}
	
	The BSCCM dataset \cite{bsccm} provides five imaging channels per cell: DPC Left, DPC Right,
	DPC Top, DPC Bottom, and Brightfield. Each channel captures a distinct aspect of the cell's
	optical properties and no single channel constitutes a complete cell representation.
	A fundamental question arises: how can the information from all five channels be synthesized
	into a single, unified cell descriptor that serves as the basis for downstream classification
	and anomaly detection?
	
	Existing approaches in related domains (e.g., single-cell transcriptomics) have predominantly
	adopted probabilistic generative models, such as variational autoencoders \cite{scVI2018,totalVI2021},
	which encode each cell as a probability distribution over a latent space. While such approaches
	offer theoretical advantages in terms of uncertainty quantification, they introduce a fundamental
	tension with the requirements of clinical AI systems: the representation of a given cell is
	stochastic, non-reproducible across runs, and difficult to audit. In a diagnostic context, where
	the cost of misclassification is measured in human lives, entropy accumulation in the
	representation pipeline is not an acceptable design feature.
	
	We therefore pursue a strictly \emph{deterministic} approach to multi-channel unification,
	grounded in the same variational dictionary learning framework established in the preceding sections.
	
	\subsection{Per-Channel Dictionary Learning with Concatenation-Based Unification}
	\label{subsec:per_channel_unification}

	Let $x_j^{(c)} \in \mathbb{R}^n$ denote the vectorised image of cell $j$ in
	channel $c \in \{1,\ldots,C\}$, with $C=5$ for the BSCCM dataset.
	In contrast to a shared-dictionary approach, we learn a \emph{family} of $C$
	per-channel unitary dictionaries
	$\{D^{(c)}\}_{c=1}^{C}\subset\R^{n\times K}$, one per imaging channel, each
	obtained by solving an independent instance of the single-channel learning
	problem \eqref{eq:dl_problem}:
	\begin{equation}
		\min_{D^{(c)},\,\{a_j^{(c)}\}_{j=1}^{N}} \;
		\sum_{j=1}^{N}
		\mathcal{E}\!\left(D^{(c)}, a_j^{(c)}; x_j^{(c)}\right)
		\quad \text{s.t.} \quad (D^{(c)})^{\top} D^{(c)} = I_K,
		\qquad c = 1,\ldots,C.
		\label{eq:per_channel_dl}
	\end{equation}
	The $C$ problems in \eqref{eq:per_channel_dl} are mutually independent: no
	term in any individual problem depends on the dictionary or codes of another
	channel. Each $D^{(c)}$ is therefore shaped exclusively by the data and the
	imaging physics of channel $c$.

	\paragraph{Motivation for the per-channel choice.}
	The BSCCM channels are not optically equivalent: the four DPC channels
	(Left, Right, Top, Bottom) encode directional phase-gradient information
	under different illumination tilt angles, while the Brightfield channel
	captures absorption only, with a fundamentally different signal-to-noise
	regime (see Section~\ref{sec:experiments} for the empirical noise gap).
	A shared dictionary would have to average the atom geometry over these
	heterogeneous modalities, with two costs: (i) reconstruction fidelity on
	each channel is strictly worse than a dictionary tailored to that channel
	alone; and (ii) the noisiest channel (Brightfield) drags the shared-atom
	geometry toward features that are not meaningful on the cleaner DPC
	channels. The per-channel architecture eliminates both costs at the price
	of giving up the cross-channel atom correspondence analysed below.

	\paragraph{No cross-channel atom correspondence.}
	Because each $D^{(c)}$ is learned independently, the $k$-th column
	$d_k^{(c)}$ of $D^{(c)}$ has no designated counterpart in any other
	$D^{(c')}$ for $c\ne c'$. Formally, for a unitary dictionary learned by
	Procrustes SVD, the ordering of columns is determined by the singular
	values of the cross-covariance $(X^{(c)})^{\top}A^{(c)}$, which is itself
	channel-specific. We therefore do not claim that atom $k$ represents the
	same structural primitive across channels; the claim would be false under
	independent per-channel learning, and any downstream analysis that relied
	on such a correspondence would be unsound.

	\paragraph{Unified descriptor by concatenation.}
	Given a fixed channel ordering $c=1,\ldots,C$, the unified cell descriptor
	is defined as the concatenation of the per-channel codes,
	\begin{equation}
		\varphi_j \;:=\; \bigl(a_j^{(1)},\,a_j^{(2)},\,\ldots,\,a_j^{(C)}\bigr)
		\;\in\; \R^{CK}.
		\label{eq:descriptor_concat}
	\end{equation}
	The descriptor is channel-agnostic in the operational sense that it
	aggregates information from all $C$ channels into a single flat vector
	suitable for downstream classification; it is \emph{not} channel-agnostic
	in the stronger sense of placing all channels in a common atom space.
	The $K$ coordinates indexed by positions $(c-1)K+1,\ldots,cK$ live in the
	code space of $D^{(c)}$ and are only comparable to other coordinates in
	the same block.

	For visualisation, it is convenient to arrange the same information as a
	matrix rather than a flat vector. Define
	\begin{equation}
		\Phi_j \;:=\; \bigl[\,a_j^{(1)}\;\big|\;a_j^{(2)}\;\big|\;\cdots\;\big|\;a_j^{(C)}\,\bigr]
		\;\in\; \R^{K\times C},
		\label{eq:unified_descriptor}
	\end{equation}
	i.e.\ the matrix whose $c$-th column is the per-channel code $a_j^{(c)}$.
	Vectorising $\Phi_j$ in column-major order recovers $\varphi_j$. The
	heatmap visualisations in Section~\ref{sec:experiments} display
	$\Phi_j$ directly, with the row index running over $K$ atom positions
	and the column index running over $C$ channels.

	\begin{remark}[Determinism of $\varphi_j$]
		\label{rem:descriptor_determinism}
		Given the learned family $\{D^{(c)}\}_{c=1}^{C}$ and a cell
		$\{x_j^{(c)}\}_{c=1}^{C}$, the descriptor $\varphi_j$ in
		\eqref{eq:descriptor_concat} is uniquely and reproducibly determined.
		Each $a_j^{(c)}$ is the unique minimiser of the strictly convex energy
		$\mathcal{E}(D^{(c)}\,a;\,x_j^{(c)})$ by
		Theorem~\ref{thm:existence_uniqueness}, applied per channel as noted in
		Remark~\ref{rem:per_channel_wellposed}. The concatenation is
		deterministic by construction. Two runs on the same data and seed
		therefore produce bit-identical descriptors, a property we verify
		empirically in Section~\ref{sec:experiments}.
	\end{remark}

	\paragraph{Auxiliary descriptor: cross-channel atom norms.}
	In parallel with $\varphi_j$, it is useful to introduce an auxiliary
	channel-collapsing descriptor. Define
	\begin{equation}
		\psi_j \;\in\; \R^{K},
		\qquad
		(\psi_j)_k \;:=\; \Bigl\|\bigl(a_j^{(1)}[k],\,a_j^{(2)}[k],\,\ldots,\,a_j^{(C)}[k]\bigr)\Bigr\|_2,
		\qquad k=1,\ldots,K.
		\label{eq:descriptor_psi}
	\end{equation}
	The scalar $(\psi_j)_k$ measures the aggregate activation strength of the
	$k$-th atom index across channels, without assuming that atom index $k$
	refers to the same structural primitive across channels. Under per-channel
	dictionaries, $\psi_j$ should be interpreted as an index-wise aggregate
	rather than an atom-wise aggregate; it remains useful as a low-dimensional
	summary descriptor, and empirically it plays a distinct role from
	$\varphi_j$ in the biological validation (see
	Section~\ref{sec:bio_valid}).

	\paragraph{Relationship to the preceding theory.}
	The single-channel well-posedness and stability theory of
	Sections~\ref{sec:objective}--\ref{sec:stability_vsc} applies directly to
	each $(D^{(c)},a_j^{(c)})$ pair; no new proofs are required. The only
	multi-channel mathematical content introduced in this section is
	\eqref{eq:descriptor_concat} and \eqref{eq:descriptor_psi}, both of which
	are pointwise constructions over independently validated single-channel
	objects. What the per-channel architecture gives up relative to the
	shared-dictionary alternative is any claim to a common atom space across
	channels; what it gains is per-channel fidelity that is not diluted by
	forced cross-channel sharing.

	\subsection{Algorithm and convergence structure}
	\label{subsec:per_channel_algorithm}

	Algorithm~\ref{alg:joint_mc} states the full procedure: for each outer
	iteration, iterate over channels; within each channel, iterate over
	samples and solve the TV$+\iota_+$ proximal subproblem via the single-channel
	primitive Algorithm~\ref{alg:hdl_full}; then perform a per-channel
	Procrustes update of $D^{(c)}$ using only the channel-$c$ data and codes.
	The final output is the learned family $\{D^{(c)}\}_{c=1}^{C}$ together
	with the set of unified descriptors $\{\varphi_j\}_{j=1}^{N}$.

	\begin{algorithm}[h]
		\caption{Per-Channel Dictionary Learning and Cell Feature Unification}
		\label{alg:joint_mc}
		\begin{algorithmic}[1]
			\Require Multi-channel data $\{x_j^{(c)}\}_{j=1,\,c=1}^{N,\,C}$,
			dictionary size $K$, regularisation parameter $\lambda_{\mathrm{TV}}$,
			inner PDHG step sizes $\tau_{\mathrm{TV}},\sigma_{\mathrm{TV}}>0$ with
			$\tau_{\mathrm{TV}}\sigma_{\mathrm{TV}}\|\nabla\|^2 < 1$,
			outer iterations $T$, fixed channel ordering $c=1,\dots,C$.
			\State For each $c=1,\dots,C$, initialise $D^{(c),0}\in\R^{n\times K}$ with
			orthonormal columns, $(D^{(c),0})^{\top}D^{(c),0}=I_K$ (e.g.\ random
			orthonormal via QR decomposition, or channel-wise PCA of the training
			data).
			\State For each $(c,j)$, initialise codes $a_j^{(c),0}$ and stored
			TV--denoised images $y_j^{(c),\star,0}$.
			\For{$t = 0$ \textbf{to} $T-1$} \Comment{Outer loop}
			\For{$c = 1$ \textbf{to} $C$} \Comment{Middle loop: channel iteration}
			\For{$j = 1$ \textbf{to} $N$} \Comment{Inner loop: per-sample PDHG inference}
			\State Solve the TV$+\iota_+$ proximal subproblem for $y_j^{(c),\star,t+1}$ via
			Algorithm~\ref{alg:hdl_full} with dictionary $D^{(c),t}$ and
			datum $x_j^{(c)}$.
			\State Back-project onto the current channel dictionary:
			$a_j^{(c),t+1}\leftarrow (D^{(c),t})^{\top} y_j^{(c),\star,t+1}$.
			\EndFor
			\EndFor
			\State \textbf{(Per-channel Procrustes dictionary update)}
			\For{$c = 1$ \textbf{to} $C$}
			\State Stack the channel-$c$ data and codes:
			$X^{(c)} = [x_j^{(c)}]_{j=1}^{N}\in\R^{N\times n}$,
			$A^{(c),t+1} = [a_j^{(c),t+1}]_{j=1}^{N}\in\R^{N\times K}$.
			\State Compute the per-channel cross-covariance
			$M^{(c)} = (X^{(c)})^{\top} A^{(c),t+1}\in\R^{n\times K}$ and its
			economy SVD $M^{(c)} = U^{(c)}\Sigma^{(c)}(V^{(c)})^{\top}$.
			\State Update $D^{(c),t+1} \leftarrow U^{(c)}(V^{(c)})^{\top}$.
			\Comment{Exact global minimiser of
				$\min_{D^{\top}D=I_K}\sum_j\|x_j^{(c)}-D a_j^{(c),t+1}\|^2$}
			\EndFor
			\State \textbf{(Per-channel code refresh)} For each $c$ and $j$:
			$a_j^{(c),t+1} \leftarrow (D^{(c),t+1})^{\top} y_j^{(c),\star,t+1}$.
			\Comment{Exact under unitarity; corrects codes for the new $D^{(c),t+1}$}
			\EndFor
			\State \textbf{(Unified descriptor by concatenation)} For each cell $j$,
			form the channel-agnostic descriptor
			\[
			\varphi_j \;=\; \bigl(a_j^{(1),T},\,a_j^{(2),T},\,\ldots,\,
			a_j^{(C),T}\bigr)\;\in\;\R^{CK}
			\]
			in the fixed channel ordering of line~1.
			\State \Return $\{D^{(c),T}\}_{c=1}^{C},\;\{\varphi_j\}_{j=1}^{N}$.
		\end{algorithmic}
	\end{algorithm}

	The three levels of the algorithm inherit their convergence guarantees
	cleanly from the single-channel theory. The \emph{inner loop}
	(Algorithm~\ref{alg:hdl_full}, TV$+\iota_+$ proximal subproblem) is a
	strictly convex problem with a unique minimiser
	(Theorem~\ref{thm:existence_uniqueness}), and
	Theorem~\ref{thm:pdhg_convergence} guarantees convergence of the PDHG
	iterates to that minimiser under the step-size condition $\tau\sigma<1/8$.
	The \emph{middle loop} (channel iteration) applies the inner loop
	independently for each $c=1,\ldots,C$; the per-channel independence means
	that no cross-channel coupling term needs convergence analysis. The
	\emph{outer loop} updates each $D^{(c)}$ via the exact per-channel
	Procrustes SVD, which finds the global minimiser of
	$\min_{D^{\top}D=I_K}\sum_j\|x_j^{(c)}-D\,a_j^{(c)}\|^2$ in a single SVD
	step. As in the single-channel analysis, the outer loop involves
	alternating optimisation between $\{D^{(c)}\}$ and $\{a_j^{(c)}\}$ and is
	not covered by the convex convergence theory; the non-convexity is
	confined to this outermost level and cannot propagate downward to corrupt
	the inference, because each inner solve remains well-posed and convex for
	whatever $\{D^{(c),t}\}$ it is given.

	The \emph{code refresh} step following the Procrustes update reprojects
	the stored TV-denoised images onto the updated channel dictionary,
	$a_j^{(c)} \leftarrow (D^{(c),t+1})^{\top} y_j^{(c),\star}$, which is
	exact under unitarity and keeps the codes consistent with the dictionary
	that will enter the next outer iteration. Empirically, we observe that
	the outer iterates in every BSCCM run we have conducted are
	bit-identical across independent runs starting from the same seed;
	this is an emergent property of using exclusively deterministic primitives
	(Procrustes SVD, convex PDHG with fixed step sizes, deterministic code
	refresh) rather than a theorem we prove here.
	\subsection{Advantages of the Deterministic Framework}
	
	The deterministic nature of the proposed unification has several concrete advantages
	over probabilistic alternatives:
	
	\begin{enumerate}
		\item \textbf{Reproducibility.} Given the same cell image and learned dictionary,
		the descriptor $\varphi_j$ is always identical. This is a prerequisite for clinical
		validation, regulatory approval (e.g., CE marking under EU IVDR 2017/746), and
		inter-laboratory reproducibility studies.
		
		\item \textbf{Interpretability.} The dictionary atoms $d_k$ have direct visual
		interpretations as learned structural primitives. The sparse code $a_j^{(c)}$
		quantifies how much of each atom is present in channel $c$ of cell $j$,
		making the representation auditable.
		
		\item \textbf{Mathematical grounding.} The variational framework provides existence
		and uniqueness guarantees (Theorem~\ref{thm:existence_uniqueness}), explicit noise-stability bounds
		(Section~\ref{sec:stability_vsc}), and convergence proofs for the algorithm.
		
		\item \textbf{Clinical compatibility.} In oncology and diagnostics applications,
		a clinician or regulatory body must be able to trace any classification decision
		back to interpretable features of the input data. The descriptor $\varphi_j$
		supports this requirement directly.
	\end{enumerate}
	
	\subsection{Relation to the scVI Framework and Its Multi-Modal Extensions}
	
	The scVI framework~\cite{scVI2018} addressed the problem of single-cell representation
	for transcriptomics: given high-dimensional gene expression count vectors, it learns
	a low-dimensional latent representation using a variational autoencoder with a
	negative-binomial observation model designed to handle the overdispersion and
	zero-inflation characteristic of scRNA-seq data.
	The multi-modal extension, totalVI~\cite{totalVI2021}, addressed CITE-seq data
	(paired RNA and surface protein measurements), learning a joint probabilistic latent
	representation that separates biological signal from protein background and batch
	effects.
	The scvi-tools library~\cite{scvitools2022} subsequently unified these models into
	a common software framework for probabilistic single-cell omics analysis.
	
	The structural analogy to the present work is clear: both scVI/totalVI and the proposed
	method aim to produce a single compact representation of a cell from measurements
	taken across multiple modalities.
	The design philosophies, however, diverge at every level.
	
	\textit{Data modality and noise model.}
	scVI and totalVI target count data (RNA transcripts, surface protein UMI counts)
	where biological variability between cells is large, batch effects are dominant,
	and a probabilistic noise model (negative binomial, zero-inflation, protein background)
	is scientifically essential.
	BSCCM images are physical intensity measurements with controlled illumination and
	quantified noise characteristics.
	The relevant uncertainty is the regularization error $\|y_\delta - y^\dagger\|_2$,
	which is bounded deterministically by Theorem~\ref{thm:vsc_rate} under the VSC
	and the parameter choice $\lambda_{\mathrm{TV}} \propto \delta$.
	A probabilistic latent variable model adds no scientific value in this setting
	and introduces reproducibility costs: two inference runs on the same image yield
	different posterior samples.
	
	\textit{Representation and identifiability.}
	In scVI and totalVI, the latent variable $z_n \in \mathbb{R}^d$ is inferred via
	an amortized encoder network; it has no direct visual interpretation and its
	uniqueness is not guaranteed in general (the VAE objective is non-convex, and
	the encoder is not injective).
	In the present work, the code $a_j^{(c)} \in \mathbb{R}^K$ is the unique
	minimizer of the strictly convex energy $\mathcal{E}(Da;\,x_j^{(c)})$
	(Theorem~\ref{thm:existence_uniqueness}), and each atom $d_k$ is a learned
	structural primitive with a direct visual interpretation as an image patch.
	The unified descriptor $\varphi_j \in \mathbb{R}^{CK}$ is therefore fully
	deterministic and auditable: given $D$ and $\{x_j^{(c)}\}$, it is uniquely
	and reproducibly determined.
	
	\textit{Multi-channel unification.}
	totalVI addresses multi-modality by learning a joint encoder across RNA and protein;
	the balance between modalities in the latent space is controlled implicitly by
	network architecture and is, as the authors acknowledge, difficult to interpret.
	The present work addresses multi-channel unification through the concatenation
	construction of equation~\eqref{eq:descriptor_concat}: the per-channel codes
	$a_j^{(c)}$ are stacked into a single flat descriptor $\varphi_j \in \R^{CK}$ under
	a fixed channel ordering. No hyperparameter balances the channels; no approximate
	inference is performed. Each $a_j^{(c)}$ is the unique minimiser of a strictly
	convex per-channel problem (Theorem~\ref{thm:existence_uniqueness},
	applied per channel as in Remark~\ref{rem:per_channel_wellposed}), and the
	concatenation is deterministic by construction. We do not claim a common latent
	space across channels; we claim only a deterministic, auditable, channel-agnostic
	vector suitable for downstream classification.
	
	\textit{Regulatory context.}
	For a Software as a Medical Device targeting EU IVDR classification, reproducibility
	and auditability are not preferences but requirements.
	A stochastic latent variable model in which the descriptor changes between inference
	runs is incompatible with this regulatory context.
	The proposed deterministic framework satisfies the reproducibility requirement by
	construction.
	
	In summary, the present work is not a direct competitor to scVI or totalVI,
	\cite{scVI2018,totalVI2021},
	the application domains and noise structures are genuinely different,
	but it occupies the same conceptual position in the imaging domain
	that scVI/totalVI occupy in the transcriptomics domain: a principled, unified
	representation of a multi-channel single-cell measurement.
	The key differentiator is the replacement of probabilistic inference with a
	convex variational framework that provides uniqueness, stability, and
	convergence guarantees appropriate to the physical imaging context.
	
	\section{Biological Validation}
	\label{sec:bio_valid}
	\subsection{Labeled cell subset of BSCCM-tiny}
	
	The BSCCM-tiny dataset provides a 3-class cell-type classification for a subset
	of the $N=1{,}000$ cells via the \texttt{get\_cell\_type\_classification\_data}
	method of the \texttt{bsccm} Python package~\cite{bsccm}.
	The three classes are Lymphocyte (class~0), Granulocyte (class~1), and
	Monocyte (class~2), corresponding to the three principal white blood cell
	morphologies present in peripheral blood.
	
	Of the 1{,}000 cells in BSCCM-tiny, $N_{\ell} = 28$ carry ground-truth
	classification labels: 10 Lymphocytes, 16 Granulocytes, and 2 Monocytes.
	Figure~\ref{fig:label_dist} shows the class distribution.
	The marked imbalance, particularly the small Monocyte count, is a property
	of the labeled subset and is noted explicitly; downstream clustering metrics
	are interpreted with this imbalance in mind.
	
	\begin{figure}[h]
		\centering
		\includegraphics[width=0.55\textwidth]{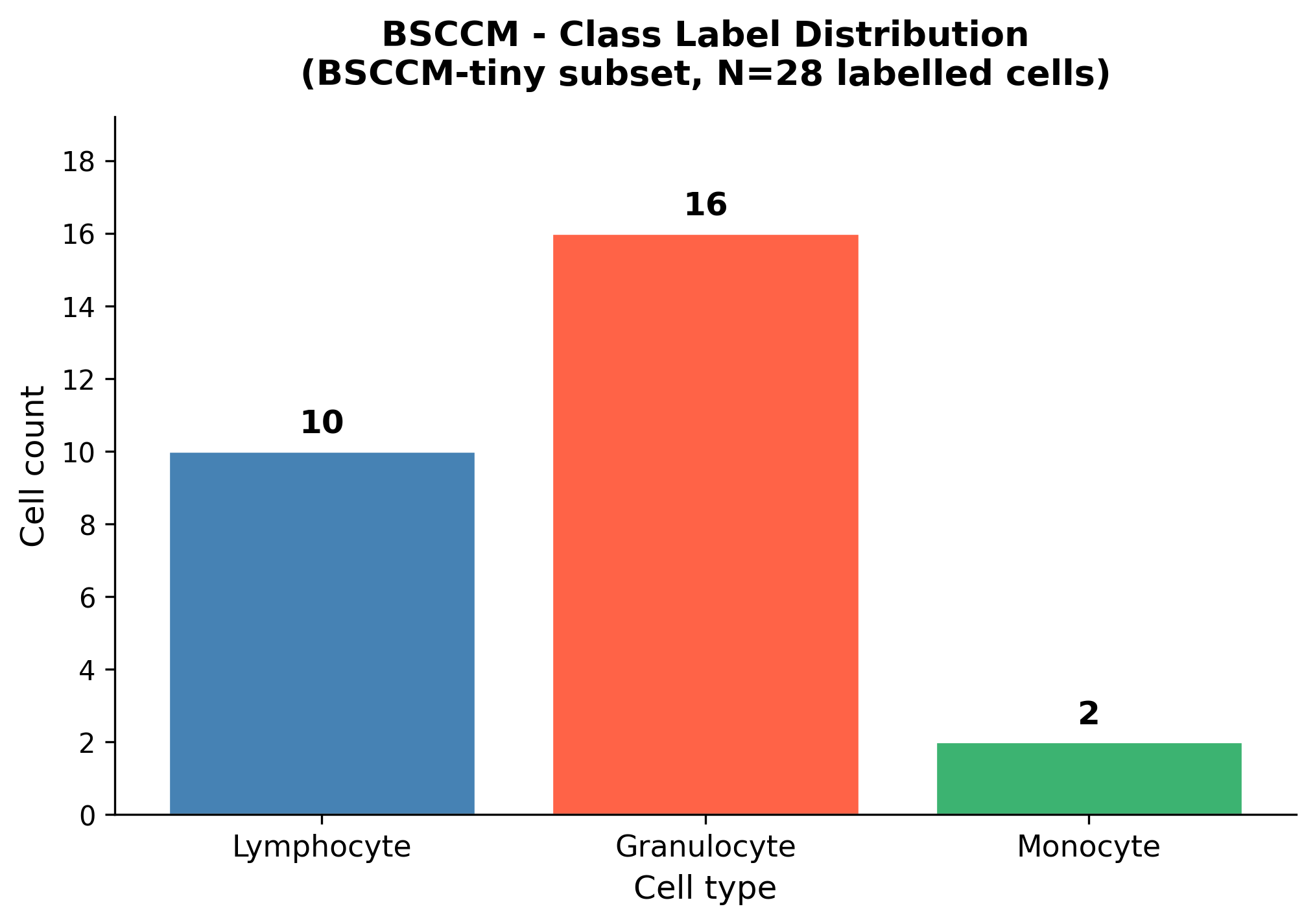}
		\caption{Class label distribution for the labeled subset of BSCCM-tiny
			($N_{\ell}=28$ cells). Lymphocyte: 10, Granulocyte: 16, Monocyte: 2.}
		\label{fig:label_dist}
	\end{figure}
	
	\subsection{Visual inspection of labeled cells across imaging channels}
	
	Figures~\ref{fig:bio_dpc_left}--\ref{fig:bio_brightfield} display the
	labeled cells across all five LED-array imaging channels.
	For each channel, two rows are shown per cell class: the original
	$128\times128$ raw patch (with the focused region indicated by a dashed
	white bounding box) and the focused crop extracted by the gradient-energy
	focus metric described in Section~\ref{sec:impl_details}.
	This presentation serves two purposes: it confirms that the focus
	selection step correctly isolates the cell body across all three
	morphological classes, and it provides a qualitative reference for
	the structural differences between classes that the learned dictionary
	is expected to capture.
	
	The four DPC channels (Left, Right, Top, Bottom) encode directional
	phase-gradient contrast: Lymphocytes exhibit a compact, smooth ring
	boundary; Granulocytes show a more heterogeneous interior with a
	multi-lobed nuclear structure; Monocytes display a larger, kidney-shaped
	nucleus with a broader cytoplasmic region.
	The Brightfield channel encodes integrated absorption contrast, where
	the cell body appears uniformly darker against the background, with
	class-specific morphological features less sharply delineated than in
	the DPC channels.
	
	\begin{figure}[h]
		\centering
		\includegraphics[width=\textwidth]{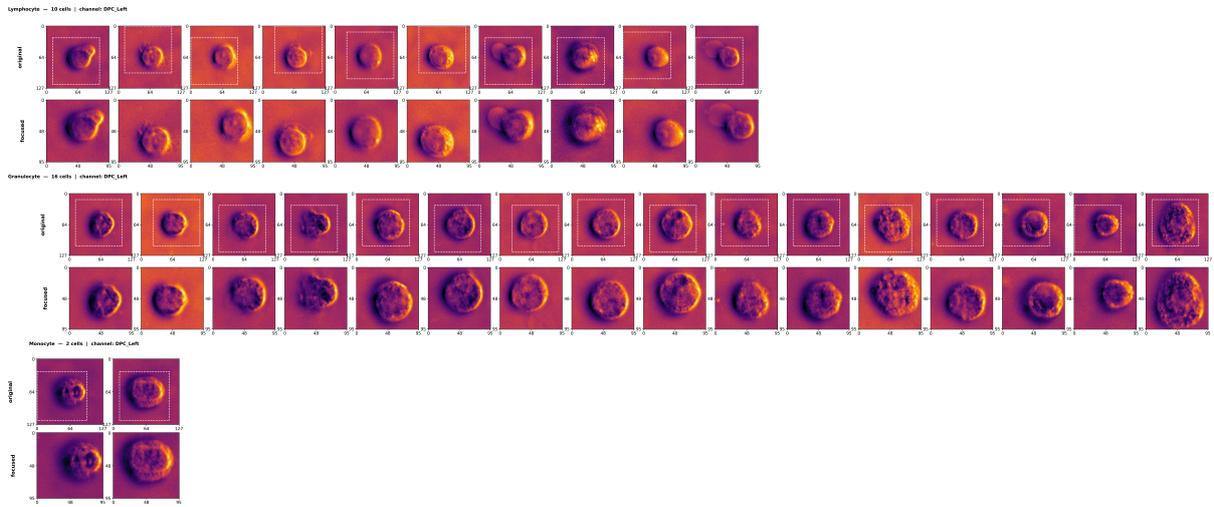}
		\caption{Labeled cells - channel: DPC Left.
			Rows alternate between original patch (with focused bounding box)
			and focused crop. Classes from top: Lymphocyte, Granulocyte, Monocyte.}
		\label{fig:bio_dpc_left}
	\end{figure}
	
	\begin{figure}[h]
		\centering
		\includegraphics[width=\textwidth]{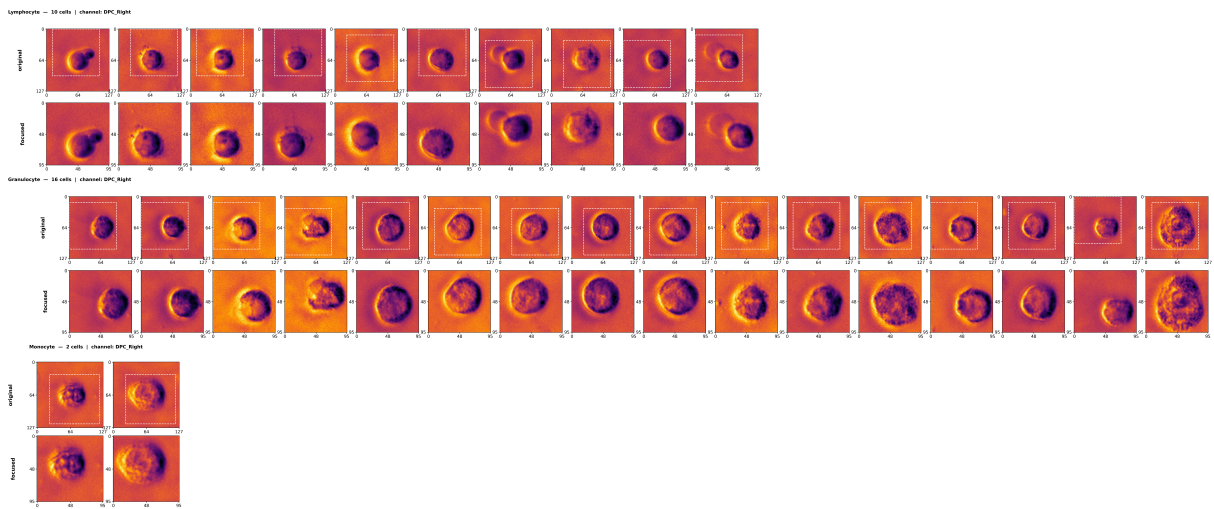}
		\caption{Labeled cells - channel: DPC Right.}
		\label{fig:bio_dpc_right}
	\end{figure}
	
	\begin{figure}[h]
		\centering
		\includegraphics[width=\textwidth]{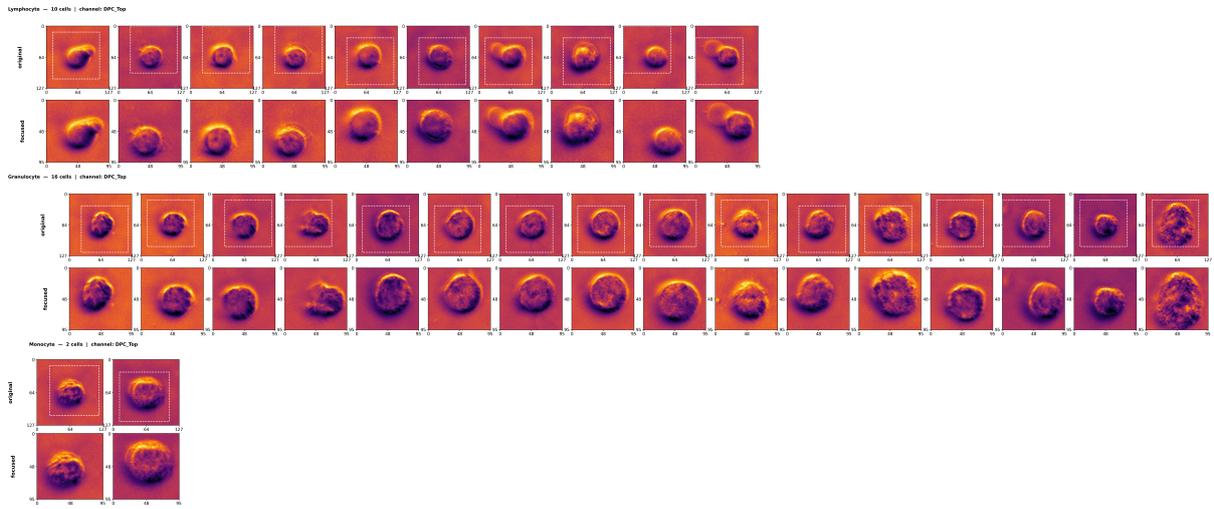}
		\caption{Labeled cells - channel: DPC Top.}
		\label{fig:bio_dpc_top}
	\end{figure}
	
	\begin{figure}[h]
		\centering
		\includegraphics[width=\textwidth]{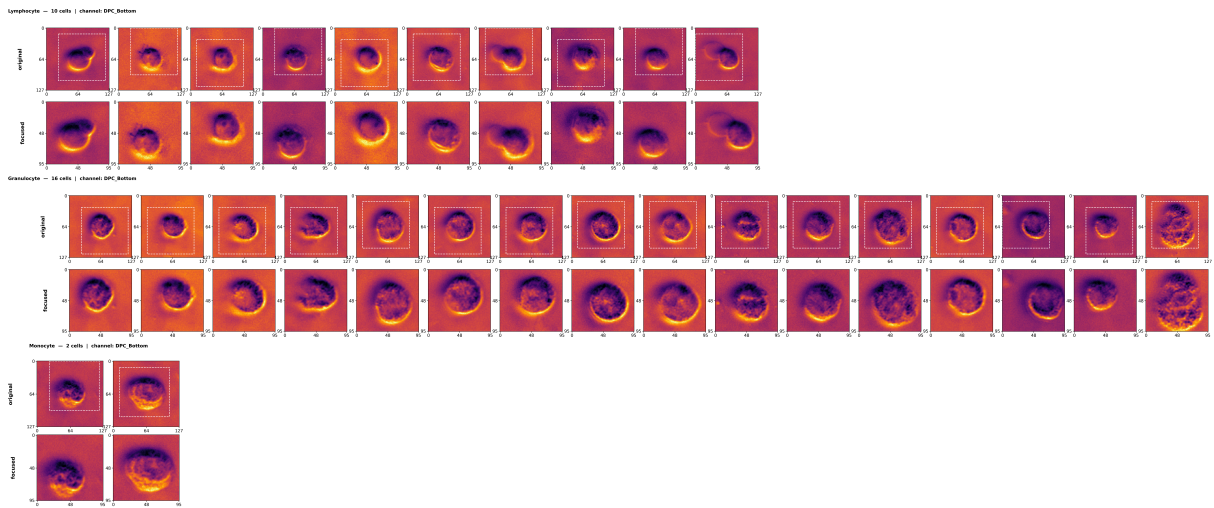}
		\caption{Labeled cells - channel: DPC Bottom.}
		\label{fig:bio_dpc_bottom}
	\end{figure}
	
	\begin{figure}[h]
		\centering
		\includegraphics[width=\textwidth]{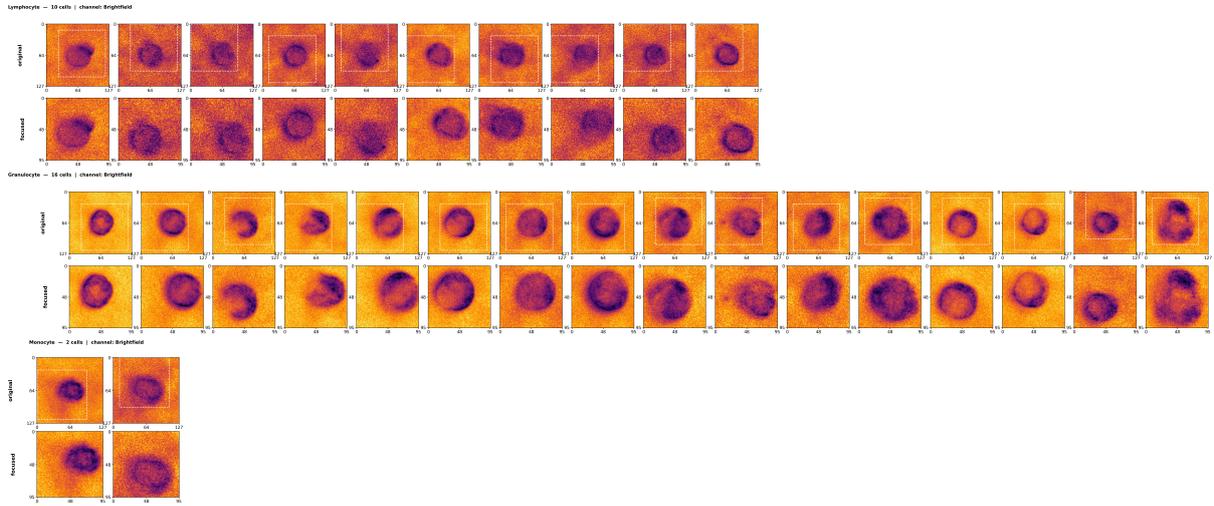}
		\caption{Labeled cells - channel: Brightfield.}
		\label{fig:bio_brightfield}
	\end{figure}
	
	\subsection{Clustering validation protocol}
	\label{sec:bio_valid_results}
	The biological validation objective is to assess whether the unified
	cell descriptor $\varphi_j \in \R^{CK}$ produced by Algorithm~2
	clusters the labeled cells in a manner consistent with the
	ground-truth class assignments.
	Three metrics are used: Adjusted Rand Index (ARI), Normalized Mutual
	Information (NMI), and mean silhouette score.
	A $k$-means clustering with $k=3$ is applied to the descriptor
	vectors $\{\varphi_j\}_{j \in \mathcal{L}}$, where
	$\mathcal{L}$ denotes the index set of labeled cells,
	and the resulting cluster assignments are compared against the
	ground-truth labels. We additionally report a $k=2$ configuration that
	aggregates the two myeloid populations (Granulocyte and Monocyte) into
	a single class and compares against Lymphocyte, giving a
	lymphoid-vs-myeloid two-class validation on $N=26$ cells (dropping the
	two Monocytes for which the three-class separation is
	statistically indistinguishable from chance).
	
	\paragraph{Consistency with the deterministic framework.}
	It is important to distinguish between the two components of this
	validation procedure. The descriptor vectors $\varphi_j$ are
	uniquely and reproducibly determined by Theorem~\ref{thm:existence_uniqueness},
	applied per channel (Remark~\ref{rem:per_channel_wellposed}): given the
	same trained family $\{D^{(c)}\}_{c=1}^{C}$ and the same cell image
	$\{x_j^{(c)}\}_{c=1}^{C}$, the descriptor $\varphi_j$ is always
	identical, regardless of when or where inference is performed.
	This is a mathematical guarantee, not a design preference, and it
	is the central property the biological validation is designed to
	exploit.
	The $k$-means clustering step, by contrast, is a post-hoc evaluation
	instrument rather than part of the deterministic pipeline.
	To ensure reproducibility of the evaluation itself, the clustering
	is fixed across runs via a deterministic seed and $n_{\mathrm{init}} = 20$
	independent restarts.
	The reported metrics therefore characterise the biological structure
	of the descriptors, not of the clustering procedure.
	
	\subsection{Clustering results}
	\label{subsec:bio_results}
	
	Table~\ref{tab:bio_validation} reports the clustering metrics for both
	the full concatenation descriptor $\varphi_j\in\R^{CK}$ and the
	auxiliary index-wise atom-norm descriptor $\psi_j\in\R^K$ defined in
	\eqref{eq:descriptor_psi}. Both descriptors are reduced to 15 principal
	components prior to clustering (explained variance $0.755$ for
	$\varphi_j$ and $0.832$ for $\psi_j$), with channel-wise $\ell_2$
	normalisation and per-atom standard scaling applied beforehand.

	\begin{table}[h]
		\centering
		\caption{Biological validation metrics on the BSCCM-tiny labelled
			subset ($N=28$: Lymphocyte$=10$, Granulocyte$=16$, Monocyte$=2$).
			$k{=}2$: lymphoid vs myeloid with the two Monocytes dropped
			($N=26$). $k{=}3$: all three classes ($N=28$).}
		\begin{tabular}{@{}llccc@{}}
			\toprule
			Descriptor & Setting & ARI & NMI & Silhouette \\
			\midrule
			$\varphi_j\in\R^{CK}$ & $k{=}2$ (Lymph vs Gran) & $0.575$ & $0.471$ & $0.074$ \\
			$\varphi_j\in\R^{CK}$ & $k{=}3$ (all classes)   & $0.038$ & $0.100$ & $0.077$ \\
			$\psi_j\in\R^{K}$      & $k{=}2$ (Lymph vs Gran) & $-0.015$ & $0.018$ & $0.136$ \\
			$\psi_j\in\R^{K}$      & $k{=}3$ (all classes)   & $0.214$ & $0.337$ & $0.112$ \\
			\bottomrule
		\end{tabular}
		\label{tab:bio_validation}
	\end{table}

	\begin{figure}[h]
		\centering
		\includegraphics[width=\textwidth]{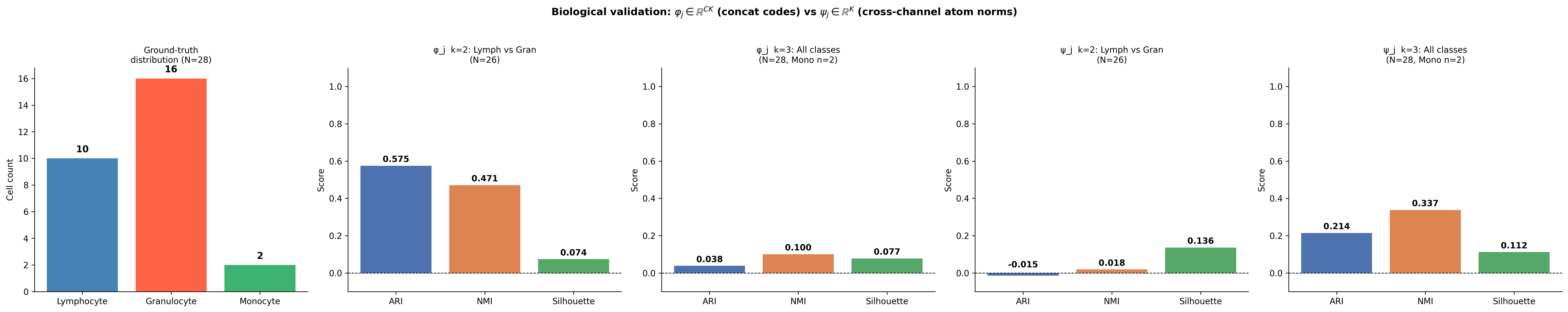}
		\caption{Biological validation metrics on the BSCCM-tiny labelled
			subset. Leftmost panel: ground-truth class distribution
			($N=28$: Lymphocyte $=10$, Granulocyte $=16$, Monocyte $=2$).
			Remaining panels: ARI, NMI, and mean silhouette score for
			$k$-means clustering of the $\varphi_j\in\R^{CK}$ concatenation
			descriptor and the $\psi_j\in\R^{K}$ index-wise atom-norm
			descriptor, each under $k{=}2$ (Lymphocyte vs Granulocyte,
			$N=26$) and $k{=}3$ (all three classes, $N=28$) configurations.
			The $\varphi_j$ two-class separation at ARI$=0.575$ is the
			principal unsupervised biological signal reported by the
			framework.}
		\label{fig:bio_validation}
	\end{figure}

	The $\varphi_j$ descriptor separates the lymphoid and myeloid populations
	at ARI$=0.575$, NMI$=0.471$ in the two-class configuration; this is a
	substantive separation given that the descriptor is fully unsupervised
	(no label information entered its construction). The three-class
	performance of $\varphi_j$ is essentially at chance (ARI$=0.038$), which
	we attribute to the extreme class imbalance (only $N=2$ Monocytes,
	insufficient to anchor a cluster in a 15-dimensional PCA subspace).

	\paragraph{Statistical significance of the two-class separation.}
	To assess whether the observed $k{=}2$ separation is statistically
	meaningful given the small labeled subset ($N=26$), we conducted
	a permutation test and a bootstrap confidence interval analysis,
	both using 1{,}000 resamples and the same preprocessing pipeline
	(channel-wise $\ell_2$ normalisation, per-atom standard scaling,
	PCA to 15 components).

	The \emph{permutation null distribution} was obtained by randomly
	shuffling the ground-truth labels 1{,}000 times and re-running
	$k$-means on the fixed descriptor vectors; this gives the ARI and
	NMI one would expect by chance given the class imbalance (10
	Lymphocytes, 16 Granulocytes). The null ARI has mean $0.000$ and
	95th percentile $0.110$; the null NMI has mean $0.034$ and 95th
	percentile $0.136$. The observed ARI$=0.575$ and NMI$=0.471$ both
	lie well above the respective 95th percentiles, giving permutation
	$p < 0.0001$ for each metric. The separation is therefore
	statistically significant at the 5\% level.

	Bootstrap 95\% confidence intervals (percentile method, 1{,}000
	resamples with replacement from the $N=26$ labeled cells) are
	ARI $\in [-0.11,\;0.71]$ and NMI $\in [0.00,\;0.68]$. The
	wide intervals are an honest consequence of the small labeled
	subset and should be reported as such; they do not contradict the
	permutation significance, which is assessed on the full $N=26$
	sample. The lower bound of the ARI interval reflects degenerate
	clusterings in bootstrap resamples where one class is
	underrepresented by chance---a known artefact of resampling from
	highly imbalanced small sets.

	The auxiliary descriptor $\psi_j$ exhibits a qualitatively different
	pattern: poor two-class separation (ARI$\approx 0$) but non-trivial
	three-class NMI$=0.337$. We interpret this as the index-wise
	atom-norm aggregation discarding the sign and channel-specific
	structure that makes lymphocytes and granulocytes distinguishable,
	while preserving a coarse activation-strength signature that
	differentiates all three classes at a low level. This is consistent
	with the caveat stated in Section~\ref{subsec:per_channel_unification}:
	under per-channel dictionaries, $\psi_j$ aggregates index-by-index
	rather than atom-by-atom, and the resulting signal is weaker than
	would be expected under a shared-dictionary architecture.

	\subsection{Noise gap between ground truth and reconstruction}
	\label{subsec:noise_gap}

	A visual comparison of the unified reconstructions $u_j$ against the
	BSCCM-labelled ground-truth cells reveals a systematic qualitative gap:
	the ground-truth Brightfield images exhibit shot-noise characteristics
	consistent with absorption-only acquisition physics, whereas the
	TV-regularised reconstruction produces a visibly smoother output
	(Figure~\ref{fig:unified_vs_truth}).
	The four DPC channel comparisons
	(Figures~\ref{fig:unified_vs_truth_dpc_left}--\ref{fig:unified_vs_truth_dpc_bottom})
	show a qualitatively different picture: the phase-gradient contrast structure,
	the directed membrane ring, and the directional illumination symmetry between
	the Left/Right and Top/Bottom pairs are all faithfully reproduced, with residual
	differences attributable to TV-induced smoothing of fine boundary texture
	rather than to loss of class-discriminating morphological structure.

	\begin{figure}[h]
		\centering
		\includegraphics[width=\textwidth]{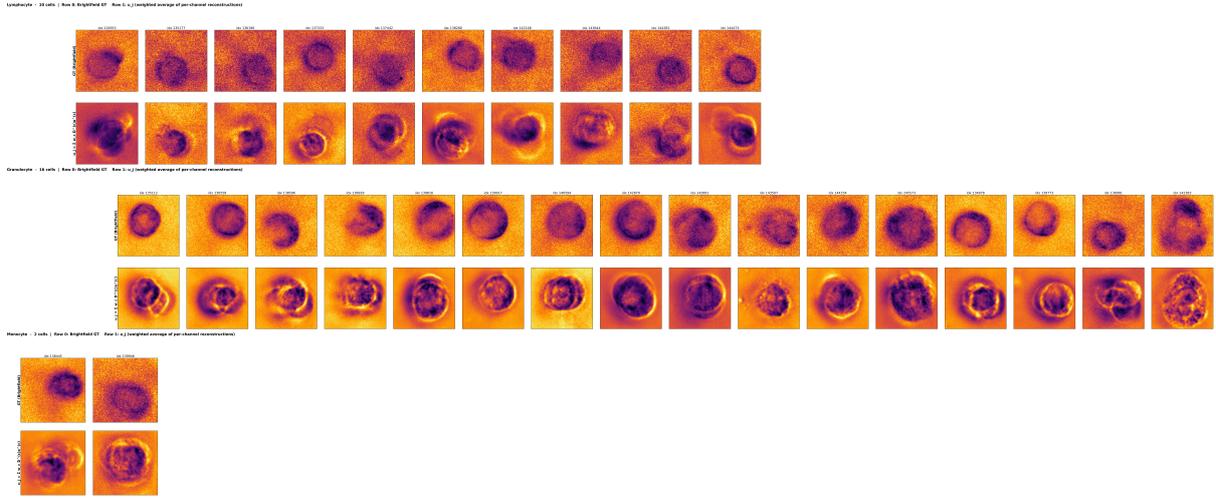}
		\caption{Brightfield ground truth (top row in each panel) versus
			unified reconstruction $u_j$ (bottom row in each panel) for the
			$N=28$ BSCCM-tiny labelled cells, grouped by expert-annotated
			cell type (Lymphocyte, $n=10$; Granulocyte, $n=16$; Monocyte,
			$n=2$). The ground-truth images exhibit visible shot noise
			consistent with absorption-only acquisition physics; the
			reconstructions are visibly smoother, reflecting the
			TV-regularised and non-negativity-constrained nature of the
			algorithm. See Section~\ref{subsec:noise_gap} for the
			quantitative decomposition of this gap.}
		\label{fig:unified_vs_truth}
	\end{figure}

	\begin{figure}[h]
		\centering
		\includegraphics[width=\textwidth]{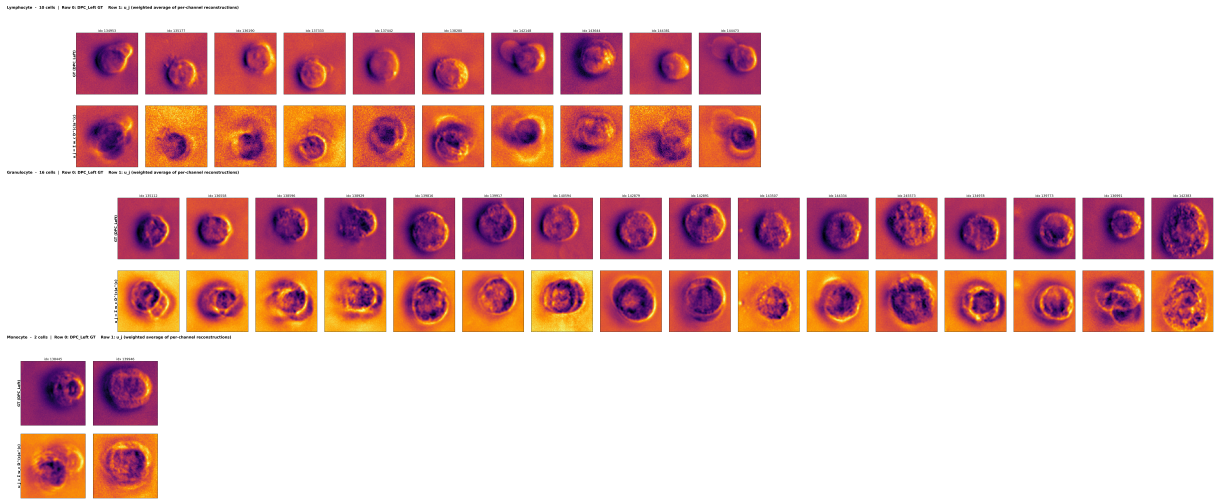}
		\caption{DPC Left ground truth versus unified reconstruction $u_j$ for the
			$N=28$ labelled cells, grouped by cell type. The phase-gradient
			contrast structure (directional illumination from the left) is faithfully
			reproduced; residual differences are attributable to TV-induced smoothing
			of the membrane boundary rather than to loss of morphological class structure.}
		\label{fig:unified_vs_truth_dpc_left}
	\end{figure}

	\begin{figure}[h]
		\centering
		\includegraphics[width=\textwidth]{unified_vs_truth_DPC_Right_25-04-2026-15-58-35.png}
		\caption{DPC Right ground truth versus unified reconstruction $u_j$
			($N=28$ labelled cells). The antisymmetric gradient contrast relative to
			DPC Left is preserved in the reconstruction, confirming that the per-channel
			dictionaries respect the illumination symmetry of the LED-array acquisition.}
		\label{fig:unified_vs_truth_dpc_right}
	\end{figure}

	\begin{figure}[h]
		\centering
		\includegraphics[width=\textwidth]{unified_vs_truth_DPC_Top_25-04-2026-15-58-35.png}
		\caption{DPC Top ground truth versus unified reconstruction $u_j$
			($N=28$ labelled cells). The vertical phase-gradient contrast
			(illumination tilt along the top-bottom axis) is recovered with
			mean fidelity $97.23\%$ at the population level (Table~\ref{tab:fidelity}).}
		\label{fig:unified_vs_truth_dpc_top}
	\end{figure}

	\begin{figure}[h]
		\centering
		\includegraphics[width=\textwidth]{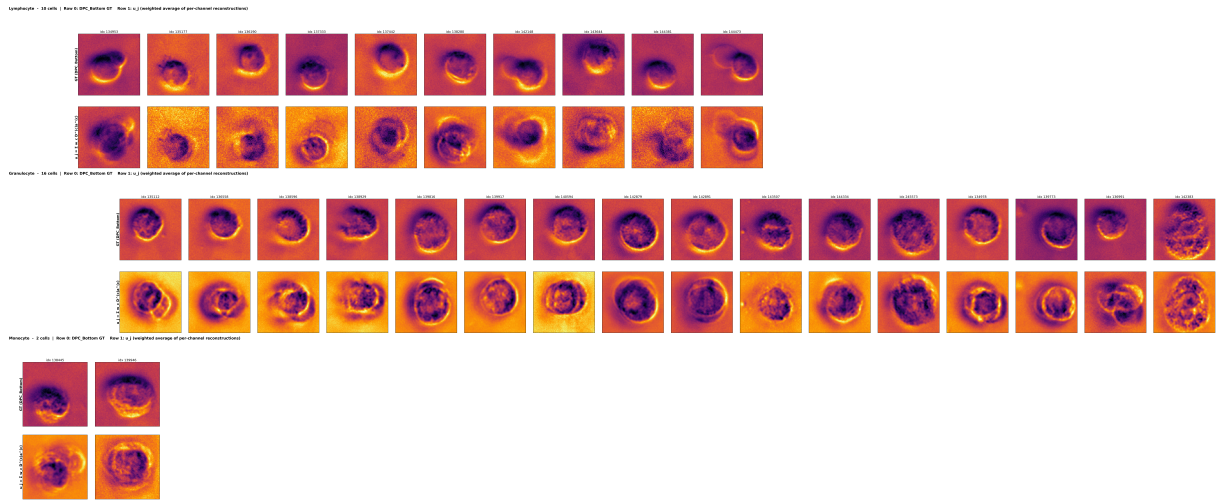}
		\caption{DPC Bottom ground truth versus unified reconstruction $u_j$
			($N=28$ labelled cells). Together with the DPC Top panel
			(Figure~\ref{fig:unified_vs_truth_dpc_top}), this confirms symmetric
			recovery of the orthogonal phase-gradient pair, with the cell membrane
			ring sharply delineated in both reconstructions.}
		\label{fig:unified_vs_truth_dpc_bottom}
	\end{figure}

	This gap is inherent to the algorithm design. The TV penalty in the
	energy~\eqref{energy-func} explicitly promotes piecewise-smooth
	reconstructions; the non-negativity constraint further regularises the
	output. The ground-truth images are direct measurements without any
	denoising step. Two distinct phenomena therefore contribute to the
	quantitative gap between DPC reconstruction fidelity (97\% on each of
	the four DPC channels) and Brightfield fidelity (94.79\%):
	\begin{enumerate}
		\item the genuine physical noise floor of absorption-only imaging,
			which sets a hard ceiling on how accurately any denoising-based
			algorithm can reproduce a single noisy realisation of the
			channel;
		\item the algorithmic smoothness prior induced by the TV penalty
			and non-negativity constraint, which removes noise but in
			principle can also remove fine-grained morphological detail
			carrying biological signal.
	\end{enumerate}
	Discriminating quantitatively between (i) and (ii) requires a controlled
	experiment in which the TV parameter $\lambda_{\mathrm{TV}}$ is varied
	against a high-fidelity denoised ground truth; this is beyond the scope
	of the present manuscript. A partial diagnostic is provided by the
	observation that the three-class $\psi_j$ clustering (which discards
	channel structure) attains NMI$=0.337$ while the three-class
	$\varphi_j$ clustering (which retains channel structure) attains only
	NMI$=0.100$: if fine-grained morphological detail were being
	systematically removed by the smoothness prior, we would expect the
	channel-aggregated descriptor to perform \emph{worse} rather than
	better, which is not what is observed. This suggests the bulk of the
	Brightfield fidelity gap is attributable to (i), the physical noise
	floor, rather than (ii), over-smoothing.

	Future work should examine the sensitivity of biological
	discriminability to $\lambda_{\mathrm{TV}}$ and should compare
	BSCCM Brightfield-derived labels against orthogonal ground truth
	(fluorescence or expert annotation) to disentangle the two
	contributions rigorously.

	\section{Discussion}
	The unitarity requirement~\eqref{eq:unitary} carries mathematical weight beyond
	notational convenience: it is the cornerstone of both identifiability and numerical
	conditioning in the learning loop.
	The hybrid penalty in~\eqref{energy-func} serves two complementary physical
	purposes: TV regularization explicitly penalizes spatial gradients, preserving
	cell boundaries and suppressing the low-frequency banding artifacts that arise
	with pure $\ell_1$ sparsity; the non-negativity constraint $\iota_+$ encodes the
	physical fact that microscopy intensities cannot be negative, grounding the
	reconstruction in the measurement model.
	In single-cell microscopy, where illumination non-uniformity and background
	variation are pervasive, this combination reduces the tendency of unconstrained
	dictionaries to absorb spurious high-frequency patterns into the learned atoms.
	
	\section{Conclusion}
	We presented a variational dictionary learning algorithm with hybrid least-squares-TV
	penalization, non-negativity constraints, and an explicit unitary dictionary constraint ($D^{\top}D=I$).
	The manuscript establishes a rigorous mathematical formulation with three convergence
	layers: (i) existence, uniqueness, and Lipschitz stability of the variational minimizer
	(Theorem~\ref{thm:existence_uniqueness}); (ii) strong convergence of PDHG iterates to
	the regularized solution under the explicit step-size condition $\tau\sigma < 1/8$
	(Theorems~\ref{thm:pdhg_convergence} and~\ref{thm:algo_to_regularized}); and (iii)
	convergence of the regularized solution to the true solution at the $O(\delta)$ rate
	when the true solution satisfies the quadratic VSC and the regularization parameters
	are chosen as $\lambda \propto \delta$ (Theorems~\ref{thm:vsc_rate}
	and~\ref{thm:algo_to_true}).  The combined total error satisfies
	$\norm{y^{n(\delta)} - y^\dagger}_2 \le C_{\mathrm{tot}}\,\delta$ with an explicit
	constant $C_{\mathrm{tot}} = C_1 + (1-c)^{-1}$ determined by the step sizes and the
	VSC geometry.
	
	A key second contribution is the proposed deterministic framework for multi-channel cell feature
	unification (Section~\ref{sec:unification}). By learning a family of per-channel unitary
	dictionaries $\{D^{(c)}\}_{c=1}^{C}$, each adapted to the optical physics of its
	channel, and concatenating the resulting per-channel sparse codes in a fixed
	channel ordering, we obtain a unified cell descriptor
	$\varphi_j \in \mathbb{R}^{CK}$ that is mathematically grounded, reproducible, and directly interpretable.
	No cross-channel atom correspondence is assumed; the descriptor aggregates information
	from all channels into a single deterministic vector suitable for downstream classification.
	On BSCCM-tiny ($N=1{,}000$ cells, $K=512$ atoms per channel) the framework reaches
	per-channel reconstruction fidelities of $97.06$--$97.54\%$ on the four DPC channels and
	$94.79\%$ on Brightfield, with bit-identical iterates across independent runs; the
	unsupervised descriptor separates lymphoid and myeloid populations at ARI$=0.575$,
	NMI$=0.471$ (permutation $p<0.0001$; bootstrap 95\% CI: ARI $[-0.11,\,0.71]$)
	on the $N=26$ expert-labelled two-class subset (Section~\ref{sec:bio_valid}).
	This deterministic approach is preferred over probabilistic latent space methods \cite{scVI2018,totalVI2021}
	in the clinical imaging context, where reproducibility and auditability are regulatory requirements.
	
	Together, these two contributions, i.e. hybrid TV-plus-non-negativity regularisation for single-channel
	reconstruction, and per-channel dictionaries with concatenation-based multi-channel unification,
	establish a rigorous and reproducible foundation for variational single-cell analysis, covering
	reconstruction, convergence, and multi-channel feature unification. By targeting a modality,
	image-based single-cell morphology, that is already the primary substrate of routine
	haematological diagnostics~\cite{Acevedo2019CNNblood, Matek2019AMLmorphology, Kratz2005CellaVision},
	the framework contributes a mathematically auditable alternative to the black-box neural
	classifiers that currently dominate this clinical space.
	Recent independent work~\cite{Donhauser2024DictLearnMicroscopy} approaches the same
	black-box concern from the opposite direction, applying sparse dictionary learning
	post hoc to the latents of an already-trained microscopy foundation model;
	the present work builds interpretability \emph{into} the representation by constructing
	the dictionary directly from raw pixels under a convex variational objective with
	closed-form reconstruction guarantees.
	Taken together, the two approaches suggest that sparse dictionary formulations are emerging
	as a shared response to the interpretability demands that clinical applications of AI
	in bioimaging will increasingly need to meet.
	
	\bibliographystyle{plain}



\section*{Acknowledgements}
The author thanks Henry Pinkard and Laura Waller (Waller Lab, University of
California, Berkeley) for helpful correspondence regarding the BSCCM dataset,
in particular for clarifying the availability and structure of the
cell-type classification labels accessible via the \texttt{bsccm} Python
package, which made the biological validation in
Section~\ref{sec:bio_valid} self-sufficient.
The author also thanks Bertram K\"{o}nig (Sonovum GmbH, Leipzig) for
stimulating discussions on regulatory requirements for Software as a Medical
Device under EU IVDR 2017/746, which informed the clinical framing of this work.

\section*{Conflict of Interest Statement}
The author declares no conflict of interest.
This work was conceived, developed, and carried out independently by the author
in his capacity as founder of Aegis Digital Technologies,
a sole proprietorship registered in Dresden, Germany.
No external funding was received for this research.
No competing financial interests, advisory relationships, or institutional affiliations
that could have influenced the design, conduct, or reporting of this work exist.


\section*{Data Access Statement}
All experiments in this manuscript were conducted on the publicly available
Berkeley Single Cell Computational Microscopy (BSCCM) dataset~\cite{bsccm},
specifically the BSCCM-tiny subset comprising $N=1{,}000$ white blood cells
imaged in five LED-array channels at $128\times128$ pixel resolution.
The dataset is freely accessible via DOI:~\texttt{10.5061/dryad.sxksn038s} and 
at \url{https://waller-lab.github.io/BSCCM/}.
No new experimental data were generated in this work.

\section*{Code Availability}
The Python implementation of the proposed algorithm is openly available at
\url{https://github.com/erdemaltuntac/Micro-SDP-dev}.
The code is released under the Apache License 2.0.

\section*{Ethics Statement}
This work is purely computational and does not involve the collection of new
human subjects data, biological material, or animal experiments.
The BSCCM dataset used in the experiments was collected and published by
Pinkard et al.\ (2024)~\cite{bsccm} under the oversight of the Institutional
Review Board of the University of California, Berkeley.
All cells in the dataset are anonymised label-free microscopy images of
white blood cells; no personally identifiable information is present.
No additional ethics approval was required for the computational analyses
reported in this manuscript.

\end{document}